\documentclass[reqno,10pt,a4paper]{amsart}

\numberwithin{equation}{section}

\usepackage{amsmath}

\usepackage{esint} % permette di fare l'integrale tagliato

\usepackage{amsthm}

\usepackage{epsfig}

\usepackage{psfrag}

\usepackage{graphicx}

\usepackage{graphpap,latexsym,epsf}

\usepackage{color}

\usepackage{amssymb,mathrsfs,enumerate}

\usepackage{endnotes}

\usepackage{calligra}
\usepackage{a4wide}

\usepackage{accents}
\usepackage{bbm}
\usepackage{dsfont}

\definecolor{citegreen}{rgb}{0,0.6,0}

\definecolor{refred}{rgb}{0.8,0,0}

\usepackage{enumitem}

\usepackage{mathtools}
\mathtoolsset{showonlyrefs}

\usepackage{palatino}
\usepackage[colorlinks, citecolor=citegreen, linkcolor=refred]{hyperref}

\newcommand{\R}{\mathbb{R}}

\newcommand{\N}{\mathbb{N}}

\newcommand{\SSS}{\mathbb{S}}

\mathchardef\emptyset="001F

\newcommand{\xx}{\hspace{.07em}}

\definecolor{vgreen}{rgb}{0.1,0.5,0.2}

\definecolor{viola}{RGB}{85,26,139}

\theoremstyle{plain}

\newtheorem{theorem}{Theorem}[section]

\newtheorem{proposition}[theorem]{Proposition}
\newtheorem{lemma}[theorem]{Lemma}

\theoremstyle{definition}
\newtheorem{definition}[theorem]{Definition}

\theoremstyle{remark}
\newtheorem{remark}[theorem]{Remark}

\definecolor{byzantium}{rgb}{0.44, 0.16, 0.39}

\definecolor{amber}{rgb}{1.0, 0.75, 0.0}

\definecolor{darkmagenta}{rgb}{0.55, 0.0, 0.55}

\definecolor{fuzzywuzzy}{rgb}{0.8, 0.4, 0.4}

\definecolor{brown}{rgb}{0.2, 0.08, 0.08}

\definecolor{arancio}{rgb}{1.0, 0.13, 0.0}

\begin{document}
\title[Decomposition of Vector Fields, $X$--ADM Mass, Higher--Dimensional Mass--Charge Inequalities]{Weighted Decomposition of Vector Fields, $X$--ADM Mass, and Higher--Dimensional Mass--Charge Inequalities}
\date{\today}

\author[Francesca Oronzio]{Francesca Oronzio}
\address[Francesca Oronzio]{Scuola Superiore Meridionale, Napoli, Italy}
\email[F. Oronzio]{f.oronzio@ssmeridionale.it}

\begin{abstract}
We study the $X$--ADM mass on complete, one--ended asymptotically flat Riemannian manifolds without boundary in arbitrary dimension $n\geqslant 3$. Starting from a weighted gradient--divergence-free decomposition of the vector field $X$, we construct a conformal metric whose scalar curvature is governed by the critical modified scalar curvature $\mathrm{R}_{X}^{(1-n)}$ associated with $X$. This yields an exact decomposition of the $X$--ADM mass into the ADM mass of the conformal metric plus a nonnegative defect term, which vanishes precisely when $X$ is a gradient, thereby giving an alternative decomposition--based proof of positivity with a full rigidity statement. Explicit negative--mass examples show that the range of parameters $k$ in the modified scalar curvature $\mathrm{R}_{X}^{(k)}$ condition is sharp.\\
We then apply the $X$--positive mass theorem to systems of vector fields, obtaining higher--dimensional multi--charge mass inequalities with rigidity and global alignment in the equality case. This yields the classical three--dimensional electric--magnetic inequality and its higher--dimensional analogue. In higher dimensions, the magnetic datum $\beta$ is a $2$--form and does not have a canonical scalar charge on the asymptotically flat end. We prove that a selected tensorial flux vanishes when $d\beta \in L^1$. Under a prescribed critical asymptotic ansatz, this flux defines a non--trivial tensorial magnetic charge and we formulate a conditional reduction to the $X$--positive mass theorem.
\end{abstract}

\maketitle

\bigskip

\noindent\textsc{MSC2020: 53C21, 53C24, 53Z05.}

\smallskip\noindent{\underline{Keywords}: asymptotically flat manifolds, ADM mass, positive mass theorem,  conformal geometry.}

\section{Introduction}

Throughout this paper, $(M, g)$ is a Riemannian manifold without boundary. We denote by $\mathrm{R}$ the scalar curvature, by $\nabla$ the Levi--Civita connection, and by $\Delta=\mathrm{tr}\nabla d$ the associated Laplace--Beltrami operator. Moreover, for any $\ell$--form $\omega$ on $M$, we denote by $\|\omega\|^2=g(\omega,\omega)/\ell!$ the squared normalized norm of $\omega$. Finally, we drop the explicit dependence on the metric for all quantities computed with respect to $g$ and we use the Einstein summation convention.

In order to state precisely our main result, we recall the definition of an asymptotically flat manifold and introduce the notion of $X$--ADM mass.

\begin{definition}\label{defAFman}
An $n$--dimensional Riemannian manifold $(M,g)$, with $n\geqslant 3$, is said to be {\em asymptotically flat} if there exists a compact subset $K$ such that $U_\infty=M\setminus K$ is diffeomorphic to $\R^{n}$ minus a closed ball $\overline{B}_{r}(0)$ by means of a coordinate map $\varphi: U_\infty\to\R^n\setminus\overline{B}_{r}(0)$ such that
\begin{equation}\label{eq1}
g_{ij}=\delta_{ij} + O_{2}(\vert x \vert ^{-\tau}),
\end{equation}
for some $\tau>(n-2)/2$ and all $i,j\in \{1,\dots,n\}$. The pair $(U_\infty, \varphi)$ will be called an {\em asymptotically flat coordinate chart} of $(M,g)$ and the real number $\tau$ will be called the {\em order of decay} (briefly, {\em the order}) of the metric $g$ in such an asymptotically flat coordinate chart.
\end{definition}

Above, we adopted the Landau big--$O$ convention (which can be easily adapted to manifolds that are diffeomorphic, outside a compact subset, to $\R^{n}$ minus a closed ball with center at the origin). We briefly recall it. Let $f$ be a smooth real--valued function defined outside a compact set of $\R^n$, and let $\tau\in \R$. We write $f=O_k(|\xx x\xx |^{-\tau})$ if there exists a constant $C>0$ such that $\vert \xx \partial^{\alpha}f(x)\xx\vert \leqslant C\vert \xx x\xx\vert ^{-|\xx \alpha\xx|-\tau}$ for every $x\in \R^n\setminus B_R(0)$, when $R>0$ is sufficiently large, for every multi--index $\alpha$ with $0 \leqslant |\xx \alpha\xx| \leqslant k$.

\begin{definition}[$X$--ADM mass]\label{defADMmass}
Let $(M^{n},g)$ be an asymptotically flat manifold. A smooth vector field $X$ on $M$ is said to be {\em admissible} if $X\in C^1_{-1-\tau_0}(M;TM)$ for some $\tau_0>(n-2)/2$.\\
Given an admissible smooth vector field $X$, we assume that $\mathrm{R}+2\,\mathrm{div}(X)\in L^1(M,g)$. Under this assumption, the following limit computed with respect to a given asymptotically flat chart exists and is finite:
\begin{equation}\label{formXADMmass}
m_{X}(g)=\frac{1}{2(n-1)|\SSS^{n-1}|}\lim_{r\to +\infty}\int\limits_{\{\vert x \vert\,=\,r\}}\!\!\!\delta_{j}^i(\delta^{kl}\partial_{k}g_{il}-\delta^{kl}\partial_{i}g_{kl}+2\delta_{ik}X^k)\frac{x^{j}}{\vert x \vert}\,d\sigma_{\mathrm{eucl}}.
\end{equation}
We call it {\em $X$--ADM mass} of $(M,g)$. We remark that, under the assumption that $\mathrm{R}+2\,\mathrm{div}(X)$ is integrable, the existence, finiteness, and invariance under changes of asymptotically flat coordinates of the above limit follow by a direct adaptation of the classical arguments of Bartnik~\cite{Bartnik} and Chru\'sciel~\cite{Chrusciel}. Indeed, the additional boundary flux $2X$ corresponds, through the divergence theorem, to replacing the scalar--curvature term $\mathrm{R}$ by $\mathrm{R}+2\,\mathrm{div}(X)$, while all remaining asymptotic error terms are integrable under the stated decay assumptions.
\end{definition}

A principal aim of this paper is to present a new decomposition--based proof of the $n$--dimensional positive mass theorem, with $n\geqslant 3$, for the $X$--ADM mass.
 
\begin{theorem}[$X$--positive mass theorem with a new decomposition]\label{GenPMT}
Let $(M^n,g)$, $n\geqslant 3$, be a connected, complete, asymptotically flat manifold of order $\tau>(n-2)/2$, and let $X$ be an admissible smooth vector field on $M$. Assume that:
\begin{itemize}
\item[$(a)$] $\mathrm{R}+2\,\mathrm{div}(X)\in L^{1}(M,g)$;
\item[$(b)$] $X\in C^{1,\alpha}_{-1-\delta}(M;TM)$ for some $\alpha\in (0,1)$ and $\delta\in \big((n-2)/2, n-2\big)$;
\item[$(c)$] $\mathrm{R}_X^{(k)}=\mathrm{R}+2\,\mathrm{div}(X)-(1+1/k)\, \vert X\vert^2\, \geqslant\,0$ on $M$, for some $k\in \R\setminus (1-n,0]$;
\end{itemize}
Let $X=\nabla f+Y$ be the unique weighted gradient–divergence-free decomposition provided by Proposition \ref{thm:main-reduction}, where $f\in C^{2,\alpha}_{-\delta}(M)$ is a smooth function and $Y\in C^{1,\alpha}_{-1-\delta}(M;TM)$ is a divergence--free smooth vector field such that $\nabla f $ and $Y$ are orthogonal with respect to the $L^2$--inner product. Then, we have
\begin{equation}\label{feqreductionX-masstogradientmasswithoutboundary}
m_X(g)=m_{\nabla f}(g).
\end{equation}
Define the operator $L$ by
$$L\phi=\Delta \phi-\frac{n-2}{n-1}\,g\left(\nabla f,\nabla \phi\right)-\frac{1}{2}\left(\frac{n-2}{n-1}\right)^{\!2}\left(g(\nabla f,Y)+\frac{1}{2\,}\vert Y\vert^2\right)\phi $$
for any $\phi\in C^2(M)$. Let $u$ be the unique smooth positive function satisfying
\begin{equation}\label{feq34}
  Lu=0\quad\text{in }\,\,M,
\qquad
  u=1+v \quad \text{with}\,\,v\in C^{2,\alpha}_{-\min\{\delta,\tau\}}(M),
\end{equation}
so that the complete asymptotically flat metric 
$$\widehat g=u^{\frac{4}{n-2}} e^{-\frac{2f}{n-1}} g$$ has
$$\mathrm{R}_{\widehat g}=u^{-\frac{4}{n-2}} e^{\frac{2f}{n-1}} \mathrm{R}_X^{(1-n)}=u^{-\frac{4}{n-2}} e^{\frac{2f}{n-1}}\bigg[\mathrm{R}_X^{(k)}
+
\left(
\frac{1}{k}+\frac{1}{n-1}
\right)|X|^2\bigg] \geq 0.$$
Then, we have 
\begin{equation}\label{GenPMI-main}
m_{X}(g)=m_{\mathrm{ADM}}\big(\widehat g \big)+ \frac{2}{(n-2)|\mathbb{S}^{n-1}|}\,\int\limits_M e^{-(\frac{n-2}{n-1})f}\left|\nabla u+\frac{n-2}{2(n-1)} uY\right|^2\,d\mu \geq  0.
\end{equation}
Furthermore, the following statements are true.
\begin{enumerate}
\item If $\mathrm{R}_X^{(k)} \geq 0$ with $k\in \R\setminus [1-n,0]$, $m_{X}(g)=0$ if and only if $X$ vanishes on $M$ and $(M,g)$ is isometric to $(\R^n,g_{\mathrm{eucl}})$.
\item If $\mathrm{R}_X^{(1-n)}\geq 0$, $m_{X}(g)=0$ if and only if $X=\nabla f$ and $(M,e^{-2f/(n-1)}g)$ is isometric to $(\R^n,g_{\mathrm{eucl}})$.
\end{enumerate}
\end{theorem}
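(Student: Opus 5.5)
Throughout, assume Proposition \ref{thm:main-reduction} for the decomposition $X=\nabla f+Y$ with $\mathrm{div}\,Y=0$.

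\emph{Step 1: reduction to the gradient part.} To prove \eqref{feqreductionX-masstogradientmasswithoutboundary}, note that $m_X(g)-m_{\nabla f}(g)$ equals the limit of $\frac{1}{(n-1)|\SSS^{n-1}|}\int_{\{|x|=r\}} g(Y,\nu)\,d\sigma$. Since $\mathrm{div}\,Y=0$, the divergence theorem shows this flux is independent of $r$. The bound $Y=O(|x|^{-1-\delta})$ makes the flux $O(r^{n-2-\delta})$. Using $\delta<n-2$ alone this bound does not tend to zero, so instead I use that $M$ is boundaryless and one-ended. Then $\{|x|\le r\}\cup K$ is a compact domain whose only boundary is the coordinate sphere. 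Applying the divergence theorem to $Y$ on it gives flux $0$ for every $r$. The Euclidean versus $g$ flux discrepancy is $O(r^{n-2-\delta-\tau})\to0$, because $\delta+\tau>n-2$. Hence $m_X=m_{\nabla f}$.

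\emph{Step 2: conformal computation.} Set $\tilde g=e^{-2f/(n-1)}g$. The standard formula $\mathrm{R}_{e^{2w}g}=e^{-2w}\big(\mathrm{R}-2(n-1)\Delta w-(n-2)(n-1)|\nabla w|^2\big)$ with $w=-f/(n-1)$ gives $\mathrm{R}_{\tilde g}=e^{2f/(n-1)}\big(\mathrm{R}+2\Delta f-\tfrac{n-2}{n-1}|\nabla f|^2\big)$. Substitute $\Delta f=\mathrm{div}X$ and $|\nabla f|^2=|X|^2-2g(\nabla f,Y)-|Y|^2$. This yields $e^{-2f/(n-1)}\mathrm{R}_{\tilde g}=\mathrm{R}_X^{(1-n)}+\tfrac{n-2}{n-1}\big(2g(\nabla f,Y)+|Y|^2\big)$.

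Next set $\widehat g=u^{4/(n-2)}\tilde g$. Then $\mathrm{R}_{\widehat g}=u^{-\frac{n+2}{n-2}}\big(-\tfrac{4(n-1)}{n-2}\Delta_{\tilde g}u+\mathrm{R}_{\tilde g}u\big)$. I rewrite $\Delta_{\tilde g}u=e^{2f/(n-1)}\big(\Delta u-\tfrac{n-2}{n-1}g(\nabla f,\nabla u)\big)$. The zeroth-order coefficient of $L$ is chosen exactly so that $Lu=0$ cancels the $Y$-terms. This gives the stated formula $\mathrm{R}_{\widehat g}=u^{-4/(n-2)}e^{2f/(n-1)}\mathrm{R}^{(1-n)}_X$. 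Its nonnegativity follows from (c), because $\tfrac1k+\tfrac1{n-1}\ge0$ for $k\notin(1-n,0]$.

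Existence, uniqueness and positivity of $u$ come from Fredholm theory in weighted Hölder spaces. The operator $L$ is an asymptotically Euclidean Laplacian plus a lower-order perturbation. Its potential $-c\big(g(\nabla f,Y)+\tfrac12|Y|^2\big)=-\tfrac c2\big(|X|^2-|\nabla f|^2\big)$ need not have a sign. I expect \textbf{this step to be the main obstacle}, and I would handle it via coercivity. Write $L$ in divergence form with weight $e^{-\frac{n-2}{n-1}f}$. Using $\mathrm{div}\,Y=0$, the quadratic form becomes $\int e^{-\frac{n-2}{n-1}f}\big|\nabla\phi+\tfrac{n-2}{2(n-1)}\phi Y\big|^2$ plus a correction. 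Integrating by parts the cross term $\phi\,g(Y,\nabla\phi)$ produces $\tfrac12\phi^2 g(\nabla e^{-cf},Y)$, which absorbs the $g(\nabla f,Y)$ term. This shows injectivity on the decaying class, and the maximum principle applied to the weighted form gives $u>0$.

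\emph{Step 3: mass identity and rigidity.} The ADM masses of conformal metrics satisfy $m_{\mathrm{ADM}}(\widehat g)=m_{\nabla f}(g)-\tfrac{2}{(n-2)|\SSS^{n-1}|}\lim\int_{S_r}\partial_\nu u$; the $f$-term produces exactly the $2\nabla f$ flux. To evaluate the $u$-flux, I multiply $Lu=0$ by $e^{-\frac{n-2}{n-1}f}u$ and integrate over $\{|x|\le r\}$. By the completed-square identity above, the boundary term $\int_{S_r}u\partial_\nu u$ equals minus the defect integral in \eqref{GenPMI-main}, the $u\,g(Y,\nu)$ boundary contribution vanishing as in Step 1. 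Since $u\to1$, this gives \eqref{GenPMI-main}. Nonnegativity then follows from the classical positive mass theorem for $\widehat g$, which is complete, asymptotically flat and has $\mathrm{R}_{\widehat g}\ge0$, in all dimensions.

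For rigidity, $m_X=0$ forces $m_{\mathrm{ADM}}(\widehat g)=0$, so $\widehat g$ is flat $\R^n$ and $\mathrm{R}_{\widehat g}\equiv0$. It also forces $\nabla u=-\tfrac{n-2}{2(n-1)}uY$. Then $\log u$ has gradient proportional to $Y$, so $Y$ is both divergence-free and a gradient, i.e. $\Delta\log u=-|\nabla\log u|^2$. Integrating against the decay shows $u\equiv1$ and $Y=0$, by $L^2$-orthogonality and uniqueness in Proposition \ref{thm:main-reduction}. In case (2) this gives $X=\nabla f$ and $e^{-2f/(n-1)}g$ flat. In case (1), $\mathrm{R}_{\widehat g}=0$ with $\tfrac1k+\tfrac1{n-1}>0$ forces $X=0$, hence $f$ is constant, and decay gives $f=0$, so $g$ is Euclidean. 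The converse directions are immediate.
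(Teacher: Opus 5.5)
Your architecture matches the paper's. You use the same decomposition and the same zero-flux argument for $Y$. You have the same weighted perfect-square identity $-\int e^{-af}\phi L\phi\,d\mu=\int e^{-af}|\nabla\phi+\frac a2\phi Y|^2\,d\mu$ with $a=\frac{n-2}{n-1}$. You integrate $e^{-af}uLu$ to produce the defect term, apply the positive mass theorem to $\widehat g$, and use the same rigidity mechanism.

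Two of your variations are harmless. First, you compute $\mathrm{R}_{\widehat g}$ directly in two conformal steps, rather than via $w=-\frac 2a\log u$, $\widetilde X=\nabla(f+w)$ and Law--Lopez--Santiago. Second, you use the general mass-shift formula in terms of $\lim_r\int_{S_r}\partial_\nu u$, whose existence follows from the integrated identity. This second choice even lets you skip the refined expansion $v=B|x|^{2-n}+O_2(|x|^{2-n-\gamma})$ that the paper takes from Lee's theory.

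The step that does not go through as written is the one you flagged. Apply the perfect square to $\chi_R\phi$; the cutoff error is $O(R^{n-2-2\delta})\to0$. Its vanishing gives only $\nabla\phi=-\frac a2\phi Y$. This equation has nonzero local solutions wherever $Y^\flat$ is closed, so nonnegativity of the form is not coercivity and does not yield injectivity.

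You need the paper's transport argument. Along any curve, $\phi(\gamma(t))=\phi(\gamma(0))\exp\bigl(-\frac a2\int_0^t g(Y,\gamma')\bigr)$. By connectedness, $\phi$ therefore vanishes identically or nowhere. Along a radial ray, $\int_0^\infty|g(Y,\gamma')|\,dt<\infty$ since $\delta>0$, so $|\phi|$ has a positive limit, which contradicts decay.

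Positivity has the same problem. Since $V$ has no sign, a plain maximum principle is unavailable. Instead, test the bilinear identity with the compactly supported Lipschitz function $u_-$ to get $\nabla u=-\frac a2uY$ on $\{u<0\}$. Then run the transport formula along a curve from a point of a component of $\{u<0\}$ to its boundary, where $u=0$, to reach a contradiction. Harnack then upgrades $u\geq0$ to $u>0$.

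Two slips also need fixing. With the outward normal, $\int_{S_r}e^{-af}u\,g(\nabla u+\frac a2uY,\nu)\,d\sigma=\int_{M_r}e^{-af}|\nabla u+\frac a2uY|^2\,d\mu$. Hence $\lim_r\int_{S_r}\partial_\nu u$ equals \emph{plus} the defect integral. That is the sign your mass-shift formula needs; ``minus'' would flip the sign of the defect term.

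In the rigidity step, $\nabla\log u=-\frac a2Y$ and $\mathrm{div}\,Y=0$ give $\Delta\log u=0$, not $-|\nabla\log u|^2$. The latter would require $\Delta u=0$, which $Lu=0$ does not give. The conclusion $u\equiv1$ then follows from the maximum principle for the decaying harmonic function $\log u$, as in the paper, or from your uniqueness-of-decomposition argument.
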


The negative--mass examples constructed in Section~\ref{Sectsharpinterval} show that the
excluded parameter interval is sharp.

The theorem above belongs to a line of developments extending the Riemannian positive mass theorem beyond the unweighted scalar--curvature setting. The classical positive mass theorem was proved by Schoen and Yau \cite{SchYau79} in dimensions $3\leqslant n\leqslant 7$ and by Witten \cite{Witten} for spin manifolds. Recent work has provided higher--dimensional formulations \cite{BHHSZ2026, CMS, CMSW, Loc, ScY2022}, culminating in all-dimensional formulations in both the smooth category and in settings allowing for controlled singularities \cite{BrendleWang2026, KhuriWangWang}. In the weighted setting, Baldauf and Ozuch \cite{BaOz2002} introduced the weighted ADM mass and established a spinorial proof of the positive mass theorem. Chu and Zhu \cite{ChuZhu} subsequently obtained a non--spin proof, while Law, Lopez, and Santiago \cite{LaLoSa} gave a unified treatment for smooth metric measure spaces and showed that the weighted positive mass theorem is equivalent, through a conformal change, to the usual one.

The $X$--ADM mass was introduced by the author and Mantegazza \cite{MaOr} for asymptotically flat three--manifolds as a vector field extension of the weighted mass. In \cite{MaOr}, positivity and rigidity were established by means of a monotonicity formula along the level sets of the Green function for an appropriate drift operator, under an additional topological assumption. This framework includes the weighted setting when $X$ is a gradient and, in particular, recovers the classical mass--charge inequality. The physical origins of this inequality trace back to Gibbons and Hull \cite{GibbHull1982}, while its first proof was given by Gibbons, Hawking, Horowitz, and Perry \cite{GHHP1983}.
Subsequent developments provided several extensions and alternative approaches to the charged positive mass theorem. Herzlich \cite{Her98} and Bartnik--Chruściel \cite{BC05} provided rigorous analytic treatments in the presence of an inner boundary. Further three--dimensional results were obtained by Jaracz \cite{Jar20}, who treated the time--symmetric case via inverse mean curvature flow and also allowed for asymptotically cylindrical ends, and by Bray, Hirsch, Kazaras, Khuri, and Zhang \cite{BHKKZ23}, who developed a spacetime harmonic function approach covering both asymptotically flat and cylindrical ends. Charged Penrose--type inequalities were also established for multiple black holes by Khuri, Weinstein, and Yamada \cite{KWY17}, and in higher dimensions by de Lima, Gir$\tilde{\text{a}}$o, Loz\'{o}rio, and Silva \cite{dLGaLS16}. Raulot \cite{Rau25} established a higher--dimensional spinorial positive energy theorem for charged initial data containing at least one asymptotically flat end. Finally, the equality case in the charged positive energy theorem has been studied in \cite{Tod83, CRT06, KW13, BHKKZ23}.

More recently, McCormick \cite{McC} established the $X$--positive mass
theorem in arbitrary dimension by means of a scalar--flat conformal
reduction. This approach relates the result directly to the classical
positive mass theorem and also yields mass--charge and boundary
versions.

Theorem \ref{GenPMT} follows a complementary, decomposition--based route. The
weighted gradient--divergence-free splitting separates the gradient
component, which determines the mass at infinity, from the
divergence--free component; a further conformal correction then removes
the latter. The resulting formula contains an exact nonnegative defect
term, which vanishes precisely when $X$ is a gradient. In the gradient
case, the construction therefore recovers the weighted conformal
framework, whereas for general vector fields it provides an alternative
decomposition--based approach to McCormick's scalar--flat reduction.

More precisely, consider the decomposition $X=\nabla f+Y$ with $\mathrm{div}(Y)=0$.
Since $Y$ has vanishing flux at infinity, the
$X$--ADM mass depends only on the gradient component of $X$, namely $m_X(g)=m_{\nabla f}(g)$.
On the other hand, Law, Lopez, and Santiago
\cite[Lemma~2.5]{LaLoSa} proved that, with the corresponding
normalization of the mass,
$$
m_{\nabla f}(g)=m_{\mathrm{ADM}}(g_f),
$$
where $g_f$ is the complete asymptotically flat metric defined as $g_f=e^{-\frac{2f}{n-1}}g$.
Consequently, the mass identity \eqref{GenPMI-main} can equivalently be written
as
$$
m_{\mathrm{ADM}}(g_f)
=
m_{\mathrm{ADM}}(\widehat g)+2\mathcal D \quad \text{with}\quad \mathcal D= \frac{1}{(n-2)|\mathbb{S}^{n-1}|}\,\int\limits_M e^{-(\frac{n-2}{n-1})f}\left|\nabla u+\frac{n-2}{2(n-1)} uY\right|^2\,d\mu.
$$
Thus, $2\mathcal D$ is the exact variation of the ADM mass produced by the
additional conformal factor $u^{4/(n-2)}$. This factor arises from the
correction $w=-[2(n-1)/(n-2)]\log u$, which replaces the divergence--free
component $Y$ by a gradient while preserving the critical modified
scalar curvature:
$$
\mathrm{R}_{\nabla(f+w)}^{(1-n)}
=
\mathrm{R}_X^{(1-n)}.
$$
In particular, $\mathcal D=0$ precisely when $Y\equiv0$. In this case
$u\equiv1$, $w\equiv0$, and $\widehat g=g_f$, so the identity above
reduces exactly to the conformal mass identity of
Law--Lopez--Santiago.

For comparison, McCormick \cite[Remark~1.1]{McC}
chooses a positive function $\varphi$ so that
$$
\widetilde g=\varphi^{\frac{4}{n-2}}g
$$
is scalar--flat and obtains, after adjusting for the normalization of the
mass used here,
$$
m_X(g)
=
m_{\mathrm{ADM}}(\widetilde g)
+
\frac{1}{2(n-1)|\mathbb S^{n-1}|}
\int_M
\left[
c_n
\left|
\nabla\varphi+\frac{2}{c_n}\varphi X
\right|^2
+
R_X^{(1-n)}\varphi^2
\right]
\,d\mu_g,
$$
where $c_n$ denotes the constant $4(n-1)/(n-2)$.
Thus, McCormick's remainder is a bulk term arising from a scalar--flat
conformal reduction applied directly to the full vector field $X$.
By contrast, the metric $\widehat g$ considered here is not required
to be scalar--flat; rather,
$$
\mathrm{R}_{\widehat g}
=
e^{\frac{2(f+w)}{n-1}}\mathrm{R}_X^{(1-n)},
$$
and the exact mass variation associated with the elimination of the
divergence--free component is encoded by the single coefficient
$-\mathcal D$ in the asymptotic of $u$. Hence the present identity may be viewed as a refinement of the
gradient conformal identity of Law--Lopez--Santiago and as a different,
decomposition--based counterpart of McCormick's scalar--flat reduction.
\smallskip

Although the $X$--positive mass theorem is formulated for a single vector field, it naturally applies to systems carrying several vector fields, thereby obtaining the following $N$--charge positive mass theorem.

\begin{theorem}[$N$--charge positive mass theorem]\label{thm:Nchargepostivemass}
Let $(M,g)$ be a connected, complete, asymptotically flat manifold, of dimension $n\geq 3$. Let $N\in\N_{>0}$. Let $\mathcal{E}_1, \dots, \mathcal{E}_N$ be admissible smooth vector fields on $M$. Assume that:
\begin{itemize}
\item[$(a)$] $\mathcal{E}_1, \dots, \mathcal{E}_N\in C^{1,\alpha}_{-1-\delta}(M;TM)$ for some $\alpha\in (0,1)$ and $\delta\in \big((n-2)/2, n-2\big)$;
 \item[$(b)$] $\mathrm{R},\ \mathrm{div}\mathcal{E}_1,\ \dots,\ \mathrm{div}\mathcal{E}_N \in L^1(M,g)$;
 \item [$(c)$] Set
 \begin{align}
  2(n-1)\mu&=\mathrm{R}-(n-1)(n-2)\bigl(|\mathcal{E}_1|^2+\dots+| \mathcal{E}_N|^2\bigr),\label{eq:generalized-matter-densityintro}\\
\rho_{\mathcal{E}_i}&=\mathrm{div} \mathcal{E}_i \quad \text{for every $i\in \{1,\dots, N\}$}, \label{eq:generalized-charge-densitiesintro}
\end{align}
the $N$--charge dominant energy condition
\begin{equation}
\mu\geq\sqrt{\rho_{\mathcal{E}_1}^2+\dots+\rho_{\mathcal{E}_N}^2}
\label{eq:n-dimensional-N-charge-decintro}
\end{equation}
holds on $M$, that is,
\begin{equation}
 \mathrm{R}-(n-1)(n-2)\bigl(|\mathcal{E}_1|^2+\dots+| \mathcal{E}_N|^2\bigr)
 \geq2(n-1)\sqrt{(\mathrm{div} \mathcal{E}_1)^2+\dots+(\mathrm{div} \mathcal{E}_N)^2}
 \label{eq:n-dimensional-N-charge-dec-expandedintro}
\end{equation}
on $M$.
\end{itemize}
Define $$\mathcal{Q}=\sqrt{\mathcal{Q}_{\mathcal{E}_1}^2+\dots+\mathcal{Q}_{\mathcal{E}_N}^2}.$$
Then, we have
\begin{equation}
 m_{\mathrm{ADM}}(g)\geq \mathcal{Q},
 \label{eq:n-dimensional-N-charge-pmtintro}
\end{equation}
where the charges $\mathcal{Q}_{\mathcal{E}_\ell}$ are given by
\begin{equation}
 \mathcal{Q}_{\mathcal{E}_\ell}=\frac{1}{ |\SSS^{n-1}|}
 \lim_{r\to+\infty}\int\limits_{\{\vert x \vert\,=\,r\}}\!\!\!\delta_{ij}\mathcal{E}_\ell^i \frac{x^{j}}{\vert x \vert}\,d\sigma_{\mathrm{eucl}},\quad \text{for every $\ell\in \{1,\dots, N\}$}.
 \label{eq:n-dimensional-N-chargesintro}
\end{equation}
Furthermore, the following statements are true.
\begin{enumerate}
\item If $\mathcal{Q}=0$, equality holds in \eqref{eq:n-dimensional-N-charge-pmtintro} if and only if $\mathcal{E}_i\equiv 0$ for every $i\in \{1,\dots, N\}$ and $(M,g)$ is isometric to $(\R^n, g_{\mathrm{eucl}})$.
\item If $\mathcal{Q}>0$, equality holds in \eqref{eq:n-dimensional-N-charge-pmtintro} if and only if there exists a smooth function $f\in C^{2,\alpha}_{-\delta}(M)$ such that 
\begin{align}
& \nabla f=-(n-1)\,\left( \frac{\mathcal{Q}_{\mathcal{E}_1}}{\mathcal{Q}}\, \mathcal{E}_1+\dots+\frac{\mathcal{Q}_{\mathcal{E}_N}}{\mathcal{Q}}\,\mathcal{E}_N \right),\label{gradformofX}\\
 \Big(M,\,&e^{-2f/(n-1)}g\Big)\,\text{is isometric to}\,
 (\mathbb R^n, g_{\mathrm{eucl}}).
 \label{eq:critical-rigidity-conformal-flat-nintro}
\end{align}
Moreover, whenever equality holds, we have
\begin{align}
& \,\,\mathcal{E}_{i}=\frac{\mathcal{Q}_{\mathcal{E}_i}}{\mathcal{Q}}\,\left(\frac{\mathcal{Q}_{\mathcal{E}_1}}{\mathcal{Q}}\mathcal{E}_1+\dots+\frac{\mathcal{Q}_{\mathcal{E}_N}}{\mathcal{Q}}\mathcal{E}_N  \right), \label{linkEieZb}\\
&\,\, \rho_{\mathcal{E}_i}=\frac{\mathcal{Q}_{\mathcal{E}_i}}{\mathcal{Q}}\,\mu.
\end{align}
for every $i\in \{1,\dots, N\}$.
\item If $\mathrm{div} \mathcal{E}_i\equiv 0$ for all $i\in \{1,\dots, N\}$, equality in
\eqref{eq:n-dimensional-N-charge-pmtintro} occurs if and only if $\mathcal{E}_i\equiv 0$ for every $i\in \{1,\dots, N\}$ and $(M,g)$ is isometric to $(\R^n,g_{\mathrm{eucl}})$.
\end{enumerate}
\end{theorem}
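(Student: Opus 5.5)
The plan is to reduce the statement to Theorem~\ref{GenPMT} with the critical parameter $k=1-n$. I would apply it to the single vector field
$$X=-(n-1)\sum_{i=1}^N a_i\,\mathcal{E}_i,\qquad a_i=\frac{\mathcal{Q}_{\mathcal{E}_i}}{\mathcal{Q}}\quad(\mathcal{Q}>0),$$
so that $\sum_i a_i^2=1$. When $\mathcal{Q}=0$ I would take $X\equiv0$.

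\emph{Checking the hypotheses of Theorem~\ref{GenPMT}.} Admissibility and hypothesis $(b)$ follow from assumption $(a)$ on the $\mathcal{E}_i$. Hypothesis $(a)$, namely $\mathrm{R}+2\,\mathrm{div}X\in L^1$, follows from assumption $(b)$. For $k=1-n$ we have $1+1/k=(n-2)/(n-1)$, hence
$$\mathrm{R}^{(1-n)}_X=\mathrm{R}-2(n-1)\sum_i a_i\,\mathrm{div}\,\mathcal{E}_i-(n-1)(n-2)\Big|\sum_i a_i\mathcal{E}_i\Big|^2 .$$
Two Cauchy--Schwarz inequalities, $|\sum_i a_i\mathcal{E}_i|^2\le\sum_i|\mathcal{E}_i|^2$ and $\sum_i a_i\rho_{\mathcal{E}_i}\le(\sum_i\rho_{\mathcal{E}_i}^2)^{1/2}$, then give
$$\mathrm{R}^{(1-n)}_X\ge 2(n-1)\Big(\mu-\sqrt{\textstyle\sum_i\rho_{\mathcal{E}_i}^2}\Big)\ge0 .$$

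\emph{Mass identity.} From \eqref{formXADMmass}, the metric part of the flux is exactly $m_{\mathrm{ADM}}(g)$, with the same normalization. The extra flux contributes
$$\frac{1}{(n-1)|\SSS^{n-1}|}\lim_{r\to\infty}\int_{\{|x|=r\}} X\cdot\tfrac{x}{|x|}\,d\sigma_{\mathrm{eucl}}=-\sum_i a_i\mathcal{Q}_{\mathcal{E}_i}=-\mathcal{Q}.$$
Hence $m_X(g)=m_{\mathrm{ADM}}(g)-\mathcal{Q}$, and \eqref{GenPMI-main} yields \eqref{eq:n-dimensional-N-charge-pmtintro}.

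\emph{Case $\mathcal{Q}=0$.} Here $X=0$ and the statement is the classical positive mass theorem, i.e.\ the case $X=0$ of Theorem~\ref{GenPMT}. If equality holds, then $(M,g)$ is Euclidean, so $\mathrm{R}=0$. Inequality \eqref{eq:n-dimensional-N-charge-dec-expandedintro} then reads $-(n-1)(n-2)\sum_i|\mathcal{E}_i|^2\ge 2(n-1)|\rho|\ge0$, which forces every $\mathcal{E}_i$ to vanish. This proves item (1).

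\emph{Item (3).} If every $\mathrm{div}\,\mathcal{E}_i\equiv0$, the divergence theorem applies on the large coordinate balls, since $M$ is one-ended with no boundary and $\mathrm{div}\,\mathcal{E}_i\in L^1$. It gives $\mathcal{Q}_{\mathcal{E}_i}=\int_M\mathrm{div}\,\mathcal{E}_i=0$, so item (3) reduces to item (1).

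\emph{Case $\mathcal{Q}>0$, direct implication.} Equality means $m_X(g)=0$, so item (2) of Theorem~\ref{GenPMT} gives $X=\nabla f$, which is \eqref{gradformofX}. It also gives that $e^{-2f/(n-1)}g$ is flat, which is \eqref{eq:critical-rigidity-conformal-flat-nintro}. Since $Y=0$, we have $u\equiv1$ and $\widehat g=e^{-2f/(n-1)}g$. The formula for $\mathrm{R}_{\widehat g}$ then gives $\mathrm{R}^{(1-n)}_X\equiv0$, so every inequality in the chain above becomes an equality pointwise.
\begin{itemize}
\item Equality in the first Cauchy--Schwarz inequality, with $(a_i)$ a unit vector, gives $\mathcal{E}_i=a_i\sum_j a_j\mathcal{E}_j$. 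This is \eqref{linkEieZb}, and it holds trivially at points where $\sum_j a_j\mathcal{E}_j=0$, because there all $\mathcal{E}_i$ vanish.
\item Equality in the second one, together with $\mu=|\rho|$, gives $\rho=\lambda a$ with $\lambda\ge0$, hence $\lambda=|\rho|=\mu$ and $\rho_{\mathcal{E}_i}=a_i\mu$.
\end{itemize}

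\emph{Case $\mathcal{Q}>0$, converse.} If \eqref{gradformofX} and \eqref{eq:critical-rigidity-conformal-flat-nintro} hold, then $X=\nabla f$ and, by the Law--Lopez--Santiago identity $m_{\nabla f}(g)=m_{\mathrm{ADM}}(e^{-2f/(n-1)}g)=0$, we get $m_X(g)=0$, i.e.\ $m_{\mathrm{ADM}}(g)=\mathcal{Q}$.

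The step needing most care is the extraction of the pointwise identities in the equality case. There one must check that item (2) of Theorem~\ref{GenPMT} genuinely forces $\mathrm{R}^{(1-n)}_X\equiv0$ (via $Y=0$, $u\equiv1$ and flatness of $\widehat g$). One must also handle the degenerate points where $\sum_j a_j\mathcal{E}_j$ vanishes when passing from equality in Cauchy--Schwarz to \eqref{linkEieZb}.
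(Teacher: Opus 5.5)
Your proposal is correct and follows the paper's proof. Like the paper, you apply Theorem \ref{GenPMT} at the critical value $k=1-n$ to $X=-(n-1)\sum_i(\mathcal{Q}_{\mathcal{E}_i}/\mathcal{Q})\,\mathcal{E}_i$, use $m_X(g)=m_{\mathrm{ADM}}(g)-\mathcal{Q}$, and in the equality case read off the alignment and $\rho_{\mathcal{E}_i}=(\mathcal{Q}_{\mathcal{E}_i}/\mathcal{Q})\mu$ from $\mathrm{R}^{(1-n)}_X\equiv0$. The differences are cosmetic: the paper proves $m_{\mathrm{ADM}}(g)\geq|\mathcal{Q}_{Z_b}|$ for every unit direction $b$ before specializing, and it replaces your equality-in-Cauchy--Schwarz analysis with the exact orthonormal splitting $\sum_i|\mathcal{E}_i|^2=|Z_b|^2+\sum_{\alpha\geq2}|W_\alpha|^2$, which gives $W_\alpha\equiv0$ (hence \eqref{linkEieZb}) directly, with no separate treatment of degenerate points.
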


In the one--ended boundaryless setting considered here, a globally divergence--free smooth field has zero total charge. Thus, nonzero charges necessarily correspond to nontrivial source densities and source--free charged black--hole data require a different global setting, such as an inner boundary or an additional end.

The preceding result has a natural interpretation in the three--dimensional Einstein--Maxwell setting. Let $(M^{3},g)$ be a time--symmetric initial data set and let $\mathcal E$ and $\mathcal B$ denote its electric and magnetic fields. The magnetic part of the Maxwell two--form can be identified, through the Hodge operator, with a one--form and hence with a vector field. The two fields therefore have the same tensorial type and fit into the $N$--field framework with $N=2$. Their divergences determine the electric and magnetic source densities, while their cross product gives the electromagnetic momentum density appearing in the Einstein--Maxwell constraint equations. After subtracting the electromagnetic contribution from the scalar curvature, the charged dominant energy condition requires the remaining energy density to control simultaneously the electric and magnetic source densities and the electromagnetic momentum density. In particular, it implies that the two--charge energy condition used in the preceding theorem holds. Therefore, applying the $N$--charge theorem to the pair $(\mathcal E, \mathcal B)$ yields the following three--dimensional charged positive mass theorem.

\begin{theorem}[$3$D charged positive mass theorem]\label{crl:time-symmetric-dyonic-pmt}
Let $(M^3,g)$ be a connected, oriented, complete, asymptotically flat Riemannian manifold. Let $\mathcal{E}$ and $\mathcal{B}$ be admissible smooth vector fields on $M$. Assume that:
\begin{itemize}
\item[$(a)$] $\mathcal{E}, \mathcal{B}\in C^{1,\alpha}_{-1-\delta}(M;TM)$ for some $\alpha\in (0,1)$ and $\delta\in \big(1/2, 1\big)$;
 \item[$(b)$] $\mathrm{R},\ \mathrm{div}\mathcal{E}, \mathrm{div}\mathcal{B} \in L^1(M,g)$;
 \item [$(c)$] Set
 \begin{equation}
\quad\quad 4\mu
=\mathrm{R}-2|\mathcal{E}|^2-2|\mathcal{B}|^2,
\qquad
\rho_{\mathcal{E}}=\mathrm{div} \mathcal{E},
\qquad
\rho_{\mathcal{B}}=\mathrm{div}\mathcal{B},
\qquad
J_{\mathrm{em}}=\star (\mathcal{E}^\flat\wedge \mathcal{B}^\flat),
\label{eq:time-symmetric-densitiesintro}
\end{equation}
the charged dominant energy condition 
\begin{equation}
\mu\geq \sqrt{\rho_\mathcal{E}^2+\rho_\mathcal{B}^2+|J_{\mathrm{em}}|^2}
\label{eq:charged-dec}
\end{equation}
holds on $M$. Here, $\star:\omega\in \Omega^2M\to\Omega^1M $ is the Hodge star operator.
\end{itemize}
Then,
\begin{equation}
m_{\mathrm{ADM}}(g)\geq \sqrt{\mathcal{Q}_{\mathcal{E}}^2+\mathcal{Q}_{\mathcal{B}}^2}
\label{eq:dyonic-mass-chargeintro}
\end{equation}
where $\mathcal{Q}_{\mathcal{E}}$ and $\mathcal{Q}_{\mathcal{B}}$ are the electric and magnetic charges, defined respectively as
\begin{equation}
\mathcal{Q}_{\mathcal{E}}=\frac{1}{4\pi} \lim_{r\to+\infty}\int\limits_{\{\vert x \vert\,=\,r\}}\!\!\!\delta_{ij}\mathcal{E}^i \frac{x^{j}}{\vert x \vert}\,d\sigma_{\mathrm{eucl}},
\qquad
\mathcal{Q}_{\mathcal{B}}=\frac{1}{4\pi} \lim_{r\to+\infty}\int\limits_{\{\vert x \vert\,=\,r\}}\!\!\!\delta_{ij}\mathcal{B}^i \frac{x^{j}}{\vert x \vert}\,d\sigma_{\mathrm{eucl}}.
\label{eq:electric-magnetic-chargesintro}
\end{equation}
Furthermore, the following statements are true.
\begin{enumerate}
\item If $\mathcal{Q}_{\mathcal{E}}=0=\mathcal{Q}_{\mathcal{B}}$, equality holds in \eqref{eq:dyonic-mass-chargeintro} if and only if $\mathcal{E}\equiv 0$, $\mathcal{B}\equiv 0$ and $(M,g)$ is isometric to $(\R^3, g_{\mathrm{eucl}})$.
\item Let $\mathcal{Q}=\sqrt{\mathcal{Q}_{\mathcal{E}}^2+\mathcal{Q}_{\mathcal{B}}^2}$. If $\mathcal{Q}>0$, equality holds in \eqref{eq:dyonic-mass-chargeintro} if and only if there exists a smooth function $f\in C^{2,\alpha}_{-\delta}(M)$ such that 
\begin{align}
& \nabla f=-2\,\left( \frac{\mathcal{Q}_{\mathcal{E}}}{\mathcal{Q}}\, \mathcal{E}+\frac{\mathcal{Q}_{\mathcal{B}}}{\mathcal{Q}}\,\mathcal{B} \right),\label{gradformofXelmag}\\
 \Big(M,\,&e^{-f}g\Big)\,\text{is isometric to}\,
 (\mathbb R^3, g_{\mathrm{eucl}}).
 \label{eq:critical-rigidity-conformal-flat-nintroelmag}
\end{align}
Moreover, whenever equality holds, we have
\begin{align}
&\,\,J_{\mathrm{em}}=0\\
& \,\,\mathcal{E}=\frac{\mathcal{Q}_{\mathcal{E}}}{\mathcal{Q}}\,\left( \frac{\mathcal{Q}_{\mathcal{E}}}{\mathcal{Q}}\, \mathcal{E}+\frac{\mathcal{Q}_{\mathcal{B}}}{\mathcal{Q}}\,\mathcal{B} \right), \label{linkEeZb}\\
& \,\,\mathcal{B}=\frac{\mathcal{Q}_{\mathcal{B}}}{\mathcal{Q}}\,\left( \frac{\mathcal{Q}_{\mathcal{E}}}{\mathcal{Q}}\, \mathcal{E}+\frac{\mathcal{Q}_{\mathcal{B}}}{\mathcal{Q}}\,\mathcal{B} \right), \label{linkBeZb}\\
&\,\, \rho_{\mathcal{E}}=\frac{\mathcal{Q}_{\mathcal{E}}}{\mathcal{Q}}\,\mu,\quad \text{and}\quad \rho_{\mathcal{B}}=\frac{\mathcal{Q}_{\mathcal{B}}}{\mathcal{Q}}\,\mu.
\end{align}
\item If $\mathrm{div} \mathcal{E}\equiv 0$ and $\mathrm{div} \mathcal{B}\equiv 0$, equality in
\eqref{eq:dyonic-mass-chargeintro} occurs if and only if $\mathcal{E}\equiv 0$, $\mathcal{B}\equiv 0$ and $(M,g)$ is isometric to $(\R^3, g_{\mathrm{eucl}})$.
\end{enumerate}
\end{theorem}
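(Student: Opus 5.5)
This statement is the case $n=3$, $N=2$ of Theorem~\ref{thm:Nchargepostivemass}, applied to $(\mathcal E_1,\mathcal E_2)=(\mathcal E,\mathcal B)$; the plan is to check that its hypotheses hold and then read off the conclusions. For $n=3$ we have $(n-1)(n-2)=2$, $2(n-1)=4$ and $|\SSS^{2}|=4\pi$. Hence the quantity $\mu$ defined in \eqref{eq:generalized-matter-densityintro} is exactly the $\mu$ of \eqref{eq:time-symmetric-densitiesintro}, and the charges \eqref{eq:n-dimensional-N-chargesintro} become $\mathcal Q_{\mathcal E}$ and $\mathcal Q_{\mathcal B}$ of \eqref{eq:electric-magnetic-chargesintro}. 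The admissible range $\delta\in((n-2)/2,n-2)$ becomes $(1/2,1)$. Therefore hypotheses $(a)$ and $(b)$ carry over verbatim. For $(c)$, since $|J_{\mathrm{em}}|^2\geq 0$, the charged dominant energy condition \eqref{eq:charged-dec} implies
\[
\mu\geq\sqrt{\rho_{\mathcal E}^2+\rho_{\mathcal B}^2},
\]
which is the two--charge condition \eqref{eq:n-dimensional-N-charge-decintro}. Theorem~\ref{thm:Nchargepostivemass} then gives inequality \eqref{eq:dyonic-mass-chargeintro}, together with rigidity items (1) and (3). It also gives item (2) apart from the claim $J_{\mathrm{em}}=0$: with $n-1=2$, formula \eqref{gradformofX} becomes \eqref{gradformofXelmag}, and $e^{-2f/(n-1)}=e^{-f}$. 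The alignment relations \eqref{linkEeZb}, \eqref{linkBeZb} and the density relations are the cases $i=1,2$ of \eqref{linkEieZb}.

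The only new content is $J_{\mathrm{em}}=0$ in the equality case. I would obtain it from the alignment. Set $Z=(\mathcal Q_{\mathcal E}/\mathcal Q)\,\mathcal E+(\mathcal Q_{\mathcal B}/\mathcal Q)\,\mathcal B$. Then \eqref{linkEeZb} and \eqref{linkBeZb} say that $\mathcal E$ and $\mathcal B$ are both constant multiples of the same vector field $Z$. Hence $\mathcal E^\flat\wedge\mathcal B^\flat=(\mathcal Q_{\mathcal E}\mathcal Q_{\mathcal B}/\mathcal Q^2)\,Z^\flat\wedge Z^\flat=0$, and so $J_{\mathrm{em}}=\star(\mathcal E^\flat\wedge\mathcal B^\flat)=0$. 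Conversely, for the ``if'' direction of item (2), the existence of $f$ satisfying \eqref{gradformofXelmag} and \eqref{eq:critical-rigidity-conformal-flat-nintroelmag} already yields equality by Theorem~\ref{thm:Nchargepostivemass}, so nothing further is needed there.

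I expect no real obstacle here. The one point to verify carefully is that the normalizations agree, so that the factor $1/(4\pi)$ and the coefficient $4\mu$ are exactly the $n=3$ specializations. If one also wanted the equality case of the full condition \eqref{eq:charged-dec} (rather than of the weaker two--charge condition), one would need to compare the two: the relation $\rho_{\mathcal E}^2+\rho_{\mathcal B}^2=\mu^2$ forced at equality, combined with \eqref{eq:charged-dec}, again gives $|J_{\mathrm{em}}|\le 0$. This provides an independent confirmation of $J_{\mathrm{em}}=0$.
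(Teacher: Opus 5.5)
Your proposal is correct and is the paper's own argument: apply Theorem~\ref{thm:Nchargepostivemass} with $n=3$, $N=2$, $(\mathcal E_1,\mathcal E_2)=(\mathcal E,\mathcal B)$, using $\mu\geq\sqrt{\rho_{\mathcal E}^2+\rho_{\mathcal B}^2+|J_{\mathrm{em}}|^2}\geq\sqrt{\rho_{\mathcal E}^2+\rho_{\mathcal B}^2}$, and deduce $J_{\mathrm{em}}=0$ at equality because the alignment identities make $\mathcal E$ and $\mathcal B$ constant multiples of the same field. Your second derivation of $J_{\mathrm{em}}=0$ is also valid: at equality, $\rho_{\mathcal E}^2+\rho_{\mathcal B}^2=\mu^2$, and combining this with the charged dominant energy condition and $\mu\geq 0$ forces $J_{\mathrm{em}}=0$; this is a harmless extra check not in the paper.
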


The three--dimensional charged theorem relies crucially on the fact that the electric and magnetic fields share the same tensorial type. In higher dimensions, this property is lost because, in the standard Maxwell theory, the electromagnetic field is represented by a 2--form on the given spacetime. Its decomposition along a spacelike hypersurface yields an electric one--form and a magnetic two--form, or equivalently an $(n-2)$--form after applying the Hodge star operator. While their interaction determines the electromagnetic momentum one--form, the mismatch in tensorial degrees prevents the constant electric--magnetic rotations that are essential to the preceding two--charge argument. Moreover, on an asymptotically flat end, the electric flux gives rise to a canonical scalar charge, whereas the magnetic two--form does not, in general, define a second scalar charge. Preserving this Maxwell decomposition therefore leads to the following higher--dimensional result.

\begin{theorem}[Maxwell--form charged positive mass theorem]\label{thm:canonical-Maxwell-PMTintro}
Let $(M^n,g)$, $n\geq 3$, be a connected, oriented, complete, asymptotically flat Riemannian manifold. Let $\mathcal E$ be a smooth admissible vector field satisfying
$$
\mathcal E\in C^{1,\alpha}_{-1-\delta}(M;TM),
\qquad
\frac{n-2}{2}<\delta<n-2,
$$
for some $\alpha\in(0,1)$, and assume that
$$
\mathrm{R},\ \mathrm{div} \mathcal E\in L^1(M,g).
$$
We denote by $e\in\Omega^1(M)$ the smooth electric one--form $\mathcal E^\flat$, and let $\beta\in\Omega^2(M)$ be a smooth magnetic two--form. We then define the magnetic $(n-2)$--form and the electromagnetic momentum one--form as
$$
b=\star\beta\in\Omega^{n-2}(M),
\qquad
J_{\mathrm{em}}
=-\iota_{\mathcal E}\beta
=(-1)^{n+1}\star(e\wedge b),
$$
respectively. Moreover, we set $\rho_{\mathcal E}=\mathrm{div} \mathcal E$ and
$$\mu=\frac{1}{2(n-1)}
\left[
\mathrm{R}-(n-1)(n-2)
\bigl(|\mathcal E|^2+\|b\|^2\bigr)
\right].
$$
Here, $\|\omega\|^2=g(\omega,\omega)/\ell!$ denotes the squared (normalized) norm of a $\ell$--form $\omega$.
Then, if
\begin{equation}\label{feq19}
  \mu
  \geq
  \sqrt{\rho_{\mathcal E}^2+(n-2)^2\,|J_{\mathrm{em}}|^2}
  \quad\text{on }M,
\end{equation}
we have
\begin{equation}\label{feq20}
  m_{\mathrm{ADM}}(g)\geq |\mathcal{Q}_{\mathcal{E}}|,
\end{equation}
where $\mathcal{Q}_{\mathcal{E}}$ is the electric charge given by
\begin{equation}
\mathcal{Q}_{\mathcal{E}}=\frac{1}{|\SSS^{n-1}|} \lim_{r\to+\infty}\int\limits_{\{\vert x \vert\,=\,r\}}\!\!\!\delta_{ij}\mathcal{E}^i \frac{x^{j}}{\vert x \vert}\,d\sigma_{\mathrm{eucl}}.
\end{equation}
Furthermore, the following statements are true.
\begin{enumerate}
\item If $\mathcal{Q}_{\mathcal{E}}=0$, equality holds if and only if $\mathcal E\equiv0$, $b\equiv0$ and $(M,g)$ is isometric to $(\R^n, g_{\mathrm{eucl}})$.
\item If $\mathcal{Q}_{\mathcal{E}}\neq 0$, equality holds if and only if there exists a smooth function $f\in C^{2,\alpha}_{-\delta}(M)$ such that 
\begin{align}
& \nabla f=-(n-1)\,\frac{\mathcal{Q}_{\mathcal{E}}}{|\mathcal{Q}_{\mathcal{E}}|}\, \mathcal{E}\label{feq21}\\
 \Big(M,\,&e^{-\frac{2f}{n-1}}g\Big)\,\text{is isometric to}\,
 (\mathbb R^n, g_{\mathrm{eucl}}).\label{feq22}
\end{align}
Moreover, whenever equality holds, the magnetic field and, consequently, the electromagnetic momentum one--form vanish identically and the energy condition is saturated:
\begin{equation}\label{feq23}
b\equiv0,
\qquad
\mu=|\rho_{\mathcal{E}}|.
\end{equation}
\end{enumerate}
\end{theorem}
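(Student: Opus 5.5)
The plan is to apply Theorem~\ref{GenPMT} with the critical parameter $k=1-n$ to the single vector field
$$X=-(n-1)\,\sigma\,\mathcal E,\qquad \sigma=\mathrm{sign}(\mathcal Q_{\mathcal E})\in\{\pm1\}$$
(with $\sigma=1$ if $\mathcal Q_{\mathcal E}=0$). The argument has four steps: compute $\mathrm{R}_X^{(1-n)}$ and show it is nonnegative, identify $m_X(g)$ in terms of the ADM mass and the charge, read off \eqref{feq20}, and then handle the equality case.

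\textbf{Curvature identity.} Since $1+1/k=(n-2)/(n-1)$ when $k=1-n$, we get
$$\mathrm{R}_X^{(1-n)}=\mathrm{R}-2(n-1)\sigma\,\mathrm{div}\,\mathcal E-(n-1)(n-2)|\mathcal E|^2 .$$
Using the definition of $\mu$, this becomes
$$\mathrm{R}_X^{(1-n)}=2(n-1)\bigl(\mu-\sigma\rho_{\mathcal E}\bigr)+(n-1)(n-2)\|b\|^2 .$$
Condition \eqref{feq19} gives $\mu\ge|\rho_{\mathcal E}|$, so $\mathrm{R}_X^{(1-n)}\ge (n-1)(n-2)\|b\|^2\ge0$. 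The remaining hypotheses of Theorem~\ref{GenPMT} are direct: $X$ is admissible with the required $C^{1,\alpha}_{-1-\delta}$ decay, and $\mathrm{R}+2\,\mathrm{div}X\in L^1$ because $\mathrm{R}$ and $\mathrm{div}\,\mathcal E$ are both integrable.

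\textbf{Mass identity.} Since $\mathrm{R}\in L^1$, the ADM mass is well defined and the flux term separates in \eqref{formXADMmass}. The extra boundary integral is $2\int\delta_{ik}X^k x^i/|x|\,d\sigma$, and with the normalization $1/(2(n-1)|\SSS^{n-1}|)$ it contributes
$$\frac{1}{(n-1)|\SSS^{n-1}|}\lim_{r\to+\infty}\int\limits_{\{|x|=r\}}\!\!\delta_{ik}X^k\frac{x^i}{|x|}\,d\sigma_{\mathrm{eucl}}=-\sigma\,\mathcal Q_{\mathcal E}=-|\mathcal Q_{\mathcal E}|.$$
Hence $m_X(g)=m_{\mathrm{ADM}}(g)-|\mathcal Q_{\mathcal E}|$, and the inequality $m_X(g)\ge0$ from \eqref{GenPMI-main} is exactly \eqref{feq20}.

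\textbf{Equality case.} Equality in \eqref{feq20} means $m_X(g)=0$, so item (2) of Theorem~\ref{GenPMT} gives $X=\nabla f$ and $(M,e^{-2f/(n-1)}g)\cong(\R^n,g_{\mathrm{eucl}})$, which is \eqref{feq21}--\eqref{feq22}. The main obstacle is extracting $b\equiv0$ and $\mu=|\rho_{\mathcal E}|$, and I would do it as follows. By the curvature formula for the flat metric $\widehat g$ with $u\equiv1$ (since $Y=0$), the flat metric $e^{-2f/(n-1)}g$ has scalar curvature $e^{2f/(n-1)}\mathrm{R}_X^{(1-n)}$. Flatness therefore forces $\mathrm{R}_X^{(1-n)}\equiv0$. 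Both nonnegative summands in the curvature identity must then vanish: $\|b\|^2=0$, and $\mu=\sigma\rho_{\mathcal E}$. Combined with $\mu\ge|\rho_{\mathcal E}|$, this gives $\mu=|\rho_{\mathcal E}|$, and then $J_{\mathrm{em}}=0$ because $b=0$.

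For item (1), with $\mathcal Q_{\mathcal E}=0$: here $\nabla f=-(n-1)\mathcal E$ with $f$ decaying, and the charge is the flux of $\nabla f$. I still need to conclude $f\equiv0$. I would use the explicit structure: $e^{-2f/(n-1)}g$ flat means $g=e^{2f/(n-1)}g_{\mathrm{eucl}}$. Since $\mathrm{R}_X^{(1-n)}=0$ and $\mathrm{R}_g$ of this conformal metric is expressed through $\Delta f$ and $|\nabla f|^2$, $f$ satisfies a linear-in-$e^{cf}$ harmonic equation, for example $\Delta_{\mathrm{eucl}}e^{\gamma f}=0$ for a suitable $\gamma$. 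A decaying harmonic function on $\R^n$ with zero flux at infinity vanishes, so $f\equiv0$, $\mathcal E\equiv0$, and $g$ is Euclidean. Alternatively, the same conclusion should follow directly from item (2)'s rigidity characterization combined with zero flux. The converse directions are immediate checks.
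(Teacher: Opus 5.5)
Your proof of \eqref{feq20} and of item (2) is correct and matches the paper's argument. The paper invokes Theorem~\ref{thm:Nchargepostivemass} with $N=1$, whose proof is exactly Theorem~\ref{GenPMT} applied to $X=-(n-1)\sigma\mathcal E$. It also extracts $b\equiv0$ and $\mu=|\rho_{\mathcal E}|$ from the vanishing of $\mathrm{R}_X^{(1-n)}$, as you do.

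The gap is in the forward direction of item (1). Once $(M,e^{-2f/(n-1)}g)$ is flat, the condition $\mathrm{R}_X^{(1-n)}=0$ says nothing about $f$. For $X=\nabla f$, the identity $\mathrm{R}_{\nabla f}^{(1-n)}(g)=e^{-2f/(n-1)}\,\mathrm{R}_{e^{-2f/(n-1)}g}$ holds for every $f$. Substituting $g=e^{2f/(n-1)}g_{\mathrm{eucl}}$ therefore gives $0=0$, not an equation such as $\Delta_{\mathrm{eucl}}e^{\gamma f}=0$. Concretely, take any nonzero compactly supported $f$ on $\mathbb R^n$, with $g=e^{2f/(n-1)}g_{\mathrm{eucl}}$ and $X=\nabla^g f$. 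Then $m_X(g)=0$, $\mathrm{R}_X^{(1-n)}\equiv0$, $e^{-2f/(n-1)}g$ is flat and the flux is zero, yet $f\not\equiv0$. Your fallback (``item (2)'s rigidity characterization combined with zero flux'') fails for the same reason.

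What rules out such configurations is the energy condition \eqref{feq19}, which your item (1) argument uses only through $\mathrm{R}_X^{(1-n)}\ge0$. You already obtained $\mu=\sigma\rho_{\mathcal E}$ in the equality case. With $\sigma=1$ and $\mu\ge|\rho_{\mathcal E}|$, this gives $\rho_{\mathcal E}\ge0$. The divergence theorem gives $\int_M\rho_{\mathcal E}\,d\mu=|\mathbb S^{n-1}|\,\mathcal Q_{\mathcal E}=0$. Hence $\mathrm{div}\,\mathcal E\equiv0$, so $\Delta f\equiv0$, and the decaying $f$ vanishes by the maximum principle. This yields $\mathcal E\equiv0$ and $g$ Euclidean.

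Alternatively, run your argument with both $\sigma=\pm1$. This is legitimate because $m_X(g)=m_{\mathrm{ADM}}(g)=0$ for both signs, and it gives $\mu=\rho_{\mathcal E}=-\rho_{\mathcal E}$.

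The paper's route is shorter still. Condition \eqref{feq19} implies $\mathrm{R}\ge(n-1)(n-2)(|\mathcal E|^2+\|b\|^2)\ge0$. So $m_{\mathrm{ADM}}(g)=0$ and classical rigidity make $(M,g)$ Euclidean. Then $\mathrm{R}\equiv0$ forces $\mathcal E\equiv0$ and $b\equiv0$ directly.
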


The above theorem shows that the magnetic field contributes as a nonnegative term to the energy condition, while the resulting mass bound involves only the electric scalar charge. This is not merely a limitation of the charge--selection argument. One may attempt to associate with the magnetic $2$--form a tensor--valued charge. Under the standard Coulomb decay and integrability assumptions, however, this tensor--valued charge vanishes.

\begin{proposition}[Vanishing of the tensorial magnetic charge in the $L^1$ regime]\label{prop:tensorial-magnetic-chargeintro}
Let $(M^n,g)$, $n>3$, be a connected, oriented, complete, asymptotically flat Riemannian manifold. Consider a smooth magnetic two--form $\beta\in\Omega^2(M)$ and set $b=\star\beta\in\Omega^{n-2}(M)$. We assume that 
\begin{equation}
\beta\in C_{1-n}^0\bigl(M;\Lambda^2T^*M\bigr) \quad\quad \text{and}\quad\quad
   d\beta\in L^1\bigl(M;\Lambda^3T^*M\bigr).
   \label{eq:beta-coulomb-decay and dbeta-L1}
\end{equation}
Then, in a fixed positively oriented asymptotically flat coordinate chart $(U_\infty, \varphi=(x^1,\dots, x^n))$ of $(M,g)$, the limits
\begin{equation} \label{eq:directional-magnetic-chargebis}
  (\mathbb Q_b)_{i_1\dots i_{n-3}}
   =\frac{(-1)^{n-1}}{|\SSS^{n-1}|}
     \lim\limits_{r\to +\infty}\int\limits_{\{\vert x \vert\,=\,r\}}\!\!\! b_{i_1\dots i_{n-3}j}\frac{x^{j}}{\vert x \vert}\,d\sigma_{\mathrm{eucl}}
\end{equation}
exist and are finite. Thus, it is well defined 
\begin{equation} \label{eq:directional-magnetic-charge}
\mathbb Q_b=\!\!\!\sum_{1\leqslant i_1<\dots<i_{n-3}\leqslant n}\!\!\!(\mathbb Q_b)_{i_1\dots i_{n-3}}e^{i_1}\wedge \dots\wedge e^{i_{n-3}}\in \Lambda^{n-3}(\mathbb R^n)^*,
\end{equation}
where $\{e^1,\dots, e^n\}$ is the dual basis associated with the canonical basis $\{e_1,\dots, e_{n}\}$ of $\R^n$. Under the integrability assumption \eqref{eq:beta-coulomb-decay and dbeta-L1},
\begin{equation}
   \mathbb Q_b=0.
   \label{eq:QB-L1-vanishingintro}
\end{equation}
In particular, the limits remain unchanged if the coordinate spheres are replaced by any homologous family of hypersurfaces exhausting the end, and are independent of the fixed positively oriented asymptotically flat chart used to compute them.
\end{proposition}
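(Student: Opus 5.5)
The plan is to rewrite each component of the flux \eqref{eq:directional-magnetic-chargebis} as the integral over $\{|x|=r\}$ of an $(n-1)$--form built from $\beta$. I would then use Stokes' theorem together with $d\beta\in L^1$, first to get existence of the limit and then, through an exactness trick that needs $n>3$, to show it vanishes.

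\emph{Step 1 (from $b$ to $\beta$).} Fix a multi--index $I=(i_1,\dots,i_{n-3})$ and write $dx^I=dx^{i_1}\wedge\dots\wedge dx^{i_{n-3}}$. The integrand $b_{Ij}\,\frac{x^j}{|x|}\,d\sigma_{\mathrm{eucl}}$ is the Euclidean flux density of the one--form $b_{Ij}dx^j$. Hence it equals the pull--back to the sphere of $\star_{\mathrm{eucl}}(b_{Ij}dx^j)$. Since $b=\star\beta$ and $g_{ij}=\delta_{ij}+O(|x|^{-\tau})$, contracting $b$ with the coordinate vectors of $I$ and applying the Euclidean Hodge star gives
$$\star_{\mathrm{eucl}}(b_{Ij}dx^j)=\pm\, dx^I\wedge\beta+O(|x|^{-\tau})\,|\beta|.$$
The sign is fixed by orientation conventions, and I expect it to absorb the factor $(-1)^{n-1}$ in \eqref{eq:directional-magnetic-chargebis}. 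Because $\beta\in C^0_{1-n}$, the error term integrates over $\{|x|=r\}$ to $O(r^{-\tau}r^{1-n}r^{n-1})=O(r^{-\tau})\to0$. So it suffices to study $F_I(r)=\int_{\{|x|=r\}}dx^I\wedge\beta$.

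\emph{Step 2 (existence and homological invariance).} We have $d(dx^I\wedge\beta)=\pm\,dx^I\wedge d\beta$, and $|dx^I|_g$ is bounded on the end. Stokes' theorem on the annulus $\{r<|x|<s\}$ therefore gives $|F_I(s)-F_I(r)|\leq C\int_{\{|x|>r\}}|d\beta|\,d\mu$, which tends to $0$ because $d\beta\in L^1$. Thus $F_I(r)$ is Cauchy and the limit exists and is finite. The same Stokes argument applies to the region between a coordinate sphere and any homologous hypersurface far out in the end, and this gives the claimed invariance under replacing the spheres by homologous exhausting families.

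\emph{Step 3 (vanishing; the main point).} Here I use $n>3$, which guarantees $n-3\geq1$. On the end, $dx^I=d\big(x^{i_1}dx^{i_2}\wedge\dots\wedge dx^{i_{n-3}}\big)$, so
$$dx^I\wedge\beta=d\big(x^{i_1}dx^{i_2\dots i_{n-3}}\wedge\beta\big)\pm x^{i_1}dx^{i_2\dots i_{n-3}}\wedge d\beta.$$
The exact term integrates to zero over the closed sphere, which leaves $F_I(r)=\pm\int_{\{|x|=r\}}x^{i_1}dx^{i_2\dots}\wedge d\beta$. The difficulty is that $d\beta$ is only integrable in the bulk, not on individual spheres. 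To handle this I would average in $r$. Since the limit $L_I$ exists, $L_I=\lim_{R\to\infty}\frac1R\int_R^{2R}F_I(r)\,dr$, and by the coarea formula
$$\Big|\frac1R\int_R^{2R}F_I(r)\,dr\Big|\leq\frac{C}{R}\int_{\{R<|x|<2R\}}|x|\,|d\beta|\,d\mu\leq 2C\int_{\{|x|>R\}}|d\beta|\,d\mu\longrightarrow0 .$$
Hence $(\mathbb Q_b)_I=0$ for every $I$, that is, $\mathbb Q_b=0$.

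Independence of the positively oriented asymptotically flat chart then follows trivially, since the limit is $0$ in every chart; alternatively it follows from Step 2, because the spheres of two such charts are homologous exhausting families. The only delicate bookkeeping is the sign and orientation constant in Step 1, which I expect to be routine.
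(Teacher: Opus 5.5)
Your proposal is correct and follows essentially the paper's own argument. You identify each component of the flux with $\int_{\{|x|=r\}}dx^I\wedge\beta$ up to an $O(r^{-\tau})$ error, and use Stokes' theorem on annuli together with $d\beta\in L^1$ to get existence of the limit. You then use the exactness $dx^I=d\big(x^{i_1}dx^{i_2}\wedge\dots\wedge dx^{i_{n-3}}\big)$, which needs $n>3$, plus the coarea formula to show vanishing.

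The only difference is cosmetic. You conclude directly through a Ces\`aro average in $r$, obtaining $|(\mathbb Q_b)_I|\lesssim\int_{\{|x|>R\}}|d\beta|\,d\mu$ for every large $R$. The paper instead argues by contradiction that a nonzero limit would force $\int_{\{|x|=r\}}|d\beta|\gtrsim 1/r$, and hence $\int|d\beta|\,d\mu\gtrsim\int^{\infty} dr/r=+\infty$.
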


Consequently, within the asymptotically flat $L^{1}$--framework, the tensorial flux associated with the magnetic two--form vanishes at infinity. Indeed, the assumption $d\beta\in L^1$, together with Stokes’ theorem and a coarea argument, forces the limiting flux to be zero. Accordingly, the selected tensorial magnetic charge vanishes, and the resulting mass bound involves only the electric charge. This conclusion concerns only the selected tensorial flux and does not imply that the magnetic two--form $\beta$, or its Hodge dual $b$, vanishes identically.

To obtain a nontrivial tensorial magnetic invariant, one must therefore leave the $L^1$--regime for $d\beta$. We accordingly consider a critical asymptotic behavior of the magnetic $(n-2)$--form $b$, for which $d\beta$ may exhibit the borderline $r^{-n}$ decay and need not be integrable. In this regime, the preceding vanishing result no longer applies, and a nonzero asymptotic flux may persist.

\begin{theorem}[Selected tensorial mass--charge inequality]\label{thm:selected-tensorial-mass-charge}
Let $(M^n,g)$, with $n>3$, be a connected, oriented, complete, asymptotically flat Riemannian manifold, and let $\mathcal E$ be a smooth admissible vector field. We assume that $\mathrm{div} \mathcal E\in L^1(M,g)$
and consider the electric charge
\begin{equation}
\mathcal{Q}_{\mathcal{E}}=\frac{1}{|\SSS^{n-1}|} \lim_{r\to+\infty}\int\limits_{\{\vert x \vert\,=\,r\}}\!\!\!\delta_{ij}\mathcal{E}^i \frac{x^{j}}{\vert x \vert}\,d\sigma_{\mathrm{eucl}}.
\end{equation}
Let $\beta\in\Omega^2(M)$ be a smooth magnetic two--form and set $b=\star\beta\in\Omega^{n-2}(M)$.
We suppose that there exists a distinguished positively oriented asymptotically flat chart $(U_{\infty}, \varphi=(x^1,\dots,x^n))$ of order $\tau>(n-2)/2$ such that
\begin{align}
b&=\frac{n}{3}\,r^{1-n}\,\bigl(dr\wedge\mathbb Q_b\bigr)+o_1(r^{1-n}), \label{feq32}\\
\mathbb Q_b&\in\Omega^{n-3}(\mathbb U_{\infty})\,\,\text{has constant component functions,}
\end{align}
where $r$ denotes $\vert x\vert$. We call $\mathbb Q_b$ the tensorial magnetic charge and we denote by $\mathcal{Q}_b$ its norm induced by the Euclidean metric, that is,
\begin{equation}
   \mathcal{Q}_b=\|\mathbb Q_b\|_{\mathrm{eucl}}.
   \label{eq:qB-norm-definition}
\end{equation}
Then, the following statements hold.
\begin{enumerate}
\item[$(a)$] The constant component functions of $\mathbb Q_b$ with respect to the coordinate basis of $\Omega^{n-3}(\mathbb U_{\infty})$ are given by
\begin{equation} \label{eq:directional-magnetic-chargebis}
  (\mathbb Q_b)_{i_1\dots i_{n-3}}
   =\frac{(-1)^{n-1}}{|\SSS^{n-1}|}
     \lim\limits_{r\to +\infty}\int\limits_{\{\vert x \vert\,=\,r\}}\!\!\! b_{i_1\dots i_{n-3}j}\frac{x^{j}}{\vert x \vert}\,d\sigma_{\mathrm{eucl}}.
\end{equation}
\item[$(b)$] For any positively oriented asymptotically flat chart $(\mathsf{U}_{\infty}, \varphi'=(\mathsf{x}^1,\dots, \mathsf{x}^n))$ of order $\tau'>(n-2)/2$, we have  
\begin{align}
b&=\frac{n}{3}\,\mathsf{r}^{1-n}\,\bigl(d\mathsf{r}\wedge\mathbb Q'_b\bigr)+o_1(\mathsf{r}^{1-n}),\\
(\mathbb Q'_b)_I&=\!\!\!\!\!\sum_{\substack{ J=(j_1, \dots,j_{n-3})\\1\leqslant j_1<\dots<j_{n-3}\leqslant n}}\!\!\!\!(\mathbb Q_b)_J\, \mathrm{det}(A^I_J),\label{eq:QB-orthogonal-transformation}
\end{align}
for every $ I=(i_1, \dots,i_{n-3})$ with $1\leqslant i_1<\dots<i_{n-3}\leqslant n$. Here, $\mathsf{r}=\vert \mathsf{x}\vert$ and $A\in SO(n)$ is the matrix appearing in the change of charts $\varphi'\circ \varphi^{-1}$, which is given by
\begin{equation}
   \mathsf{x}=Ax+c+O_2(r^{1-\min\{\tau,\tau'\}})
   \label{eq:AF-transition-QB}
\end{equation}
In particular, $\mathcal{Q}_b$ is independent of the positively oriented asymptotically flat chart.
\item[$(c)$] If $\mathcal Q_b>0$, there exists an admissible smooth vector field $\mathcal Z$, related to the magnetic two--form $\beta$, such that
$$\mathcal Z^\flat=(-1)^{n-1}\star(\omega\wedge\beta),$$
for some smooth closed $(n-3)$-form $\omega$ on $M$ satisfying $\omega=\mathbb Q_b/\mathcal Q_b$ near infinity, and $\mathcal Q_{\mathcal Z}=\mathcal Q_b$.
\item[$(d)$]  Set $\mathcal{Q}=\sqrt{\mathcal{Q}_{\mathcal{E}}^2+\mathcal{Q}_{b}^2}$. If $\mathcal{Q}>0$, assume that one can choose an admissible smooth vector field $\mathcal{Z}$, satisfying $\mathcal{Q}_{\mathcal Z}=\mathcal{Q}_b$, such that the smooth vector field
$$X=-(n-1)\,\left( \frac{\mathcal{Q}_{\mathcal{E}}}{\mathcal{Q}}\, \mathcal{E}+\frac{\mathcal{Q}_{b}}{\mathcal{Q}}\,\mathcal{Z} \right)$$
meets the following conditions:
\begin{itemize}
\item[$\circ$] $X\in C^{1,\alpha}_{-1-\delta}(M;TM)$ for some $\alpha\in(0,1)$ and $\delta\in((n-2)/2,n-2)$;
\item[$\circ$] $R+2\mathrm{div}X\in L^1(M,g)$;
\item[$\circ$] $\mathrm{R}_X^{(1-n)}=\mathrm{R}+2\mathrm{div}X-(n-2)/(n-1)|X|^2\geq0$ on $M$.
\end{itemize}
Then, the inequality
\begin{equation}
  m_{\mathrm{ADM}}(g) \geq\mathcal{Q}
  \label{eq:selected-X-mass-charge-bound}
\end{equation}
holds, with equality if and only if there exists a smooth function $f\in C^{2,\alpha}_{-\delta}(M)$ such that $X=\nabla f$ and $\bigl(M,e^{-2f/(n-1)}g\bigr)$ is isometric to $(\R^n, g_{\mathrm{eucl}})$.
\end{enumerate}
\end{theorem}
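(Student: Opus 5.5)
The plan is to treat the four items in order. Items $(a)$ and $(b)$ are direct asymptotic computations from the ansatz \eqref{feq32}. Item $(c)$ is a construction. Item $(d)$ is a reduction to Theorem \ref{GenPMT} with $k=1-n$.

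For $(a)$, I will insert the ansatz $b=\frac{n}{3}r^{1-n}(dr\wedge\mathbb Q_b)+o_1(r^{1-n})$ into the flux integral. The $o(r^{1-n})$ remainder contributes $o(1)$ after integration over a sphere of area $|\SSS^{n-1}|r^{n-1}$, and the difference between $g$ and the Euclidean metric is of lower order. For the main term, write $dr=\frac{x^j}{r}dx^j$. The contraction $b_{i_1\dots i_{n-3}j}x^j/r$ then reduces to a linear combination of $(\mathbb Q_b)_{I}$ and of terms $\frac{x^{i_s}x^j}{r^2}(\mathbb Q_b)_{\dots j\dots}$, with the sign $(-1)^{n-1}$ coming from moving $dr$ to the last slot. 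The angular averages are $\fint x^ix^j/r^2=\delta_{ij}/n$. The counting of the surviving terms (one radial term minus the $n-3$ mixed terms, each weighted by $1/n$) should reproduce the normalization $\frac{n}{3}\cdot\frac{3}{n}=1$, since $1-(n-3)/n=3/n$. This combinatorial constant is exactly what the factor $n/3$ is designed to cancel, and I expect checking it carefully to be the only delicate point in $(a)$.

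For $(b)$, I will use the standard fact that AF charts are related by $\mathsf x=Ax+c+O_2(r^{1-\min\{\tau,\tau'\}})$ with $A\in O(n)$, and here $A\in SO(n)$ by positive orientation. It follows that $\mathsf r=r+O(1)$ and $d\mathsf r=A\,dr+O(r^{-\min\{\tau,\tau'\}})$. Pulling back the constant form $\mathbb Q_b$ under $A$ gives the minors formula \eqref{eq:QB-orthogonal-transformation}. All error terms are $o_1(r^{1-n})$ because $\tau,\tau'>0$. Orthogonal invariance of the induced Euclidean norm on $\Lambda^{n-3}$ then yields the chart independence of $\mathcal Q_b$.

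For $(c)$, I will take $\omega=\chi\,\mathbb Q_b/\mathcal Q_b$ on the end, using constant coefficients. This form is closed there. I will extend it to a smooth closed $(n-3)$-form on $M$, for instance by writing it as $d\eta$ with $\eta$ linear in $x$ on the end (possible since $n-3\ge1$) and setting $\omega=d(\chi\eta)$ with $\chi$ a cutoff. I then define $\mathcal Z^\flat=(-1)^{n-1}\star(\omega\wedge\beta)$. Its decay is $O(r^{1-n})$, so it is admissible, given that $\beta\in C^1_{1-n}$ follows from the ansatz. To compute its charge, note that $\star\mathcal Z^\flat$ restricted to spheres equals $\pm\,\omega\wedge\star\beta$ contracted appropriately, i.e. the flux of $\mathcal Z$ equals $\sum_I\omega_I\cdot(\text{flux of }b\text{ in direction }I)$. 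By $(a)$ this equals $\langle \mathbb Q_b/\mathcal Q_b,\mathbb Q_b\rangle=\mathcal Q_b$. The sign bookkeeping in relating $\star(\omega\wedge\beta)$ to contractions of $b$ is the main technical obstacle of the whole theorem, and I will handle it by working at one point in the Euclidean frame.

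For $(d)$, I apply Theorem \ref{GenPMT} with $k=1-n$, i.e. case (2), using the three listed hypotheses. This gives $m_X(g)\ge0$. From \eqref{formXADMmass} and the definitions of the charges, $m_X(g)=m_{\mathrm{ADM}}(g)+\frac{1}{n-1}\mathcal Q_X$, and by linearity
$$\mathcal Q_X=-(n-1)\Big(\tfrac{\mathcal Q_{\mathcal E}^2}{\mathcal Q}+\tfrac{\mathcal Q_b^2}{\mathcal Q}\Big)=-(n-1)\mathcal Q.$$
Hence $m_{\mathrm{ADM}}(g)-\mathcal Q=m_X(g)\ge0$. The equality characterization is exactly statement (2) of Theorem \ref{GenPMT}.
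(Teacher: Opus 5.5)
Your proposal is correct and follows essentially the same route as the paper. The paper also uses the angular average $\fint \nu^i\nu^j=\delta_{ij}/n$ with the cancellation $\frac{n}{3}\bigl(1-\frac{n-3}{n}\bigr)=1$ in $(a)$, the $SO(n)$ transition law with orthogonal (Cauchy--Binet) invariance of the norm in $(b)$, the closed extension $\omega=d(\chi\eta)$ with $\eta$ linear in $x$ and a Hodge--contraction reduction of the flux of $\mathcal Z$ to $(a)$ in $(c)$, and Theorem~\ref{GenPMT} at $k=1-n$ via $m_X(g)=m_{\mathrm{ADM}}(g)-\mathcal Q$ in $(d)$. The one point the paper states explicitly that you leave implicit is this: since $\mathrm R\in L^1$ is not assumed, $m_{\mathrm{ADM}}(g)$ is well defined only through that splitting, namely because $m_X(g)$ and the fluxes of $\mathcal E$ and $\mathcal Z$ all converge. (Also, in $(c)$ the relevant $(n-1)$--form is $\star\mathcal Z^\flat=\omega\wedge\beta$, not $\omega\wedge\star\beta$.)
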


Point $(a)$ of the previous theorem remains true by replacing $o_1(r^{1-n})$ with $o(r^{1-n})$ in the expression \eqref{feq32}.

We emphasize that the tensorial magnetic charge $\mathbb Q_b$ cannot be used directly in the $X$--ADM theorem, since the latter is formulated in terms of vector fields and their scalar fluxes. We therefore pair the magnetic two--form with the normalized asymptotic direction $\mathbb Q_b/\mathcal Q_b$, extend this direction to a smooth closed $(n-3)$--form $\omega$ on $M$, and use the resulting pairing to define a vector field $\mathcal Z$. The normalization is chosen so that the scalar flux of $\mathcal Z$ is precisely $\mathcal Q_b$. Now, the extension $\omega$ is not unique, and hence neither is the associated vector field $\mathcal Z$. Nevertheless, every such choice has the same asymptotic flux, namely $\mathcal Q_{\mathcal Z}=\mathcal Q_b$. Accordingly, the global curvature hypothesis in part $(d)$ is an existence assumption: it is required to hold for some suitable choice of the extension $\omega$, or equivalently of the associated vector field $\mathcal Z$.
\medskip

The paper is organized as follows. In Section \ref{sectWeighted gradient--divergence-free decomposition}, we show the existence of the weighted gradient--divergence-free decomposition of an admissible vector field satisfying the condition $(b)$ of Theorem \ref{GenPMT}, with resulting equality \eqref{feqreductionX-masstogradientmasswithoutboundary}. In Section \ref{SectionProofmain}, we prove that there exists a unique smooth positive function $u$ that satisfies \eqref{feq34}, we present its asymptotic expansion in a generic asymptotically flat chart, and finally we show the exact decomposition of the $X$--ADM mass given by \eqref{GenPMI-main}, with resulting $X$--positive mass inequality and subsequent discussion of rigidity cases. In Section \ref{Sectsharpinterval}, we construct explicit negative--mass examples that show that the range of parameters in the modified scalar curvature condition $(c)$ of Theorem \ref{GenPMT} is sharp. In Section \ref{Sectfirst consequences}, we apply Theorem \ref{GenPMT} to systems of vector fields, obtaining higher--dimensional multi--charge mass inequalities with rigidity and global alignment in the equality case, Theorem \ref{thm:Nchargepostivemass}. This theorem yields the $3$--dimensional charged positive mass theorem, Theorem \ref{crl:time-symmetric-dyonic-pmt}.
In the last section, Section \ref{SectHighmahetic}, we first show the higher--dimensional analogue of Theorem \ref{crl:time-symmetric-dyonic-pmt}, see Theorem \ref{thm:canonical-Maxwell-PMTintro}. 
Then, we associate to a magnetic $2$--form a tensor--valued charge. We show that it vanishes under suitable integrability assumptions, Proposition \ref{prop:tensorial-magnetic-chargeintro}, while it is well--defined, in a suitable sense, under a prescribed critical asymptotic expansion of its Hodge dual $b=\star \beta$, Theorem \ref{thm:selected-tensorial-mass-charge}. In the same theorem, by means of the Euclidean norm of this tensor--valued charge, we can formulate and prove a mass–charge inequality.

\section{Weighted gradient--divergence-free decomposition}\label{sectWeighted gradient--divergence-free decomposition}

We start by introducing the weighted H\"{o}lder spaces, which allow us to generalize some results about second--order linear elliptic operators on compact manifolds to the setting of asymptotically flat manifolds.

\begin{definition}[Weighted spaces]\label{def:weighted-spaces}
Let $(M,g)$ be a connected, complete, asymptotically flat manifold, of dimension $n\geq 3$.
Let $r$ be a smooth positive function equal to $|x|$ on the end. 
\begin{itemize}
\item 
For $\delta\in\R$ and $k\in\N$, the space $C^{k}_{-\delta}(M)$ consists of functions $u\in C^{k}(M)$ for which the norm 
\begin{align}
\|u\|_{ C^{k}_{-\delta}(M)}
&=\sum_{j=0}^k \sup_M \Big(r^{\delta+j}|\nabla^j u|\Big)
\end{align}
is finite.
\item 
For $\delta\in\R$, $k\in\N$, and $\alpha\in(0,1)$, the space $C^{k,\alpha}_{-\delta}(M)$ consists of functions $u\in C^{k}(M)$ for which the norm 
\begin{align}
\qquad\quad \,\,\|u\|_{ C^{k,\alpha}_{-\delta}(M)}
=\sum_{j=0}^k \sup_M \Big(r^{\delta+j}|\nabla^j u|\Big)+\sup_{\substack{B^*\!(p, \rho(p)) \\ p \in M}}
\left(\min\{r(p),r(\,\cdot\,)\}^{\delta+k+\alpha}
\frac{|\nabla^k u(p)-\nabla^k u(\,\cdot\,)|}{\mathrm{dist} (p,\,\cdot)^\alpha}\right)\!,
\end{align}
is finite. Here, $\rho(p)$ is the injectivity radius of the point $p$, the set $B^*\!(p, \rho(p))$ denotes the punctured ball centered at $p$ and radius $\rho(p)$, and for any $q\in B^*\!(p, \rho(p))$, the quantity $|\nabla^k u(p)-\nabla^k u(q)|$ is defined using parallel transport along the minimizing geodesic connecting $q$ to $p$. 
These spaces are equivalent to those introduced in \cite{Leebook}. 
They are Banach spaces independent of the asymptotically flat chart. Moreover, one can check that the multiplication is a continuous map from $C^{0,\alpha}_{-\delta_1}(M)\times C^{0,\alpha}_{-\delta_2}(M)$ to $C^{0,\alpha}_{-\delta_1-\delta_2}(M)$. Moreover, the following inclusions are true.
\begin{align}
&C^{k_1,\alpha_1}_{-\delta_1}(M)\subset C^{k_2,\alpha_2}_{-\delta_2}(M)\quad\text{if $k_1\geq k_2$, $\alpha_1\geq \alpha_2$ and $\delta_1\geq \delta_2$,}\\
&C^{k+1}_{-\delta}(M)\subset C^{k,\alpha}_{-\delta}(M)\quad\quad\,\,\text{for every $\alpha\in (0,1)$}.
\end{align}
The spaces of sections $C^{k}_{-\delta}(M; T^{l}_{m}M)$ and $C^{k,\alpha}_{-\delta}(M; T^{l}_{m}M)$ are defined analogously.
\end{itemize}
\end{definition}

We collect several useful results about second--order linear elliptic operators on weighted H\"{o}lder spaces. They can be found in \cite[Appendix A]{Leebook}, for example.

\begin{itemize}
\item[$(i)$] Let $0<\delta<n-2$ and $\alpha \in (0,1)$. The map $\Delta: C^{2,\alpha}_{-\delta}(M)\to C^{0,\alpha}_{-\delta-2}(M) $ is an isomorphism.
\item[$(ii)$] If $Z\in C^{0,\alpha}_{-1-\delta}(M;TM)$ and $h\in C^{0,\alpha}_{-2-\delta}(M)$, where $\alpha\in (0,1)$ and $0<\delta<n-2$, the second--order elliptic linear operator 
$$L=-\Delta +\nabla_{Z}+h: C^{2,\alpha}_{-\delta}(M)\to C^{0,\alpha}_{-2-\delta}(M)$$
is a Fredholm operator with zero index. 
\end{itemize}

A key result for proving our main result is the following decomposition.

\begin{proposition}[Weighted gradient--divergence-free decomposition]\label{thm:main-reduction}
Let $(M,g)$ be an $n$--dimensional, connected, complete, asymptotically flat manifold, with $n\geq3$, and let $X$ be an admissible smooth vector field. If $X\in C^{1,\alpha}_{-1-\delta}(M;TM)$ for some $\alpha\in (0,1)$ and $\delta\in \big((n-2)/2, n-2\big)$, there exist a unique smooth function $f\in C^{2,\alpha}_{-\delta}(M)$ and a unique divergence--free smooth vector field $Y\in C^{1,\alpha}_{-1-\delta}(M;TM)$ such that $X=\nabla f+Y$. Moreover, the smooth vector fields $\nabla f$ and $Y$ are orthogonal with respect to the $L^2$--inner product.
\end{proposition}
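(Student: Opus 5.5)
We take $f$ to be the solution of the Poisson problem $\Delta f=\mathrm{div}(X)$ and set $Y=X-\nabla f$. Since $X\in C^{1,\alpha}_{-1-\delta}(M;TM)$, we have $\mathrm{div}(X)\in C^{0,\alpha}_{-2-\delta}(M)$. The range $0<\delta<n-2$ holds because $(n-2)/2>0$, so fact $(i)$ applies: $\Delta: C^{2,\alpha}_{-\delta}(M)\to C^{0,\alpha}_{-\delta-2}(M)$ is an isomorphism. This gives a unique $f\in C^{2,\alpha}_{-\delta}(M)$ with $\Delta f=\mathrm{div}(X)$.

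\textbf{Regularity.} Since $X$ and $g$ are smooth, interior elliptic regularity shows that $f$ is smooth. The field $Y=X-\nabla f$ is then smooth and divergence--free. Moreover $\nabla f\in C^{1,\alpha}_{-1-\delta}(M;TM)$, directly from the definition of the weighted norms, so $Y\in C^{1,\alpha}_{-1-\delta}(M;TM)$.

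\textbf{Uniqueness.} Suppose $X=\nabla f_1+Y_1=\nabla f_2+Y_2$ are two such decompositions. Then $h=f_1-f_2\in C^{2,\alpha}_{-\delta}(M)$ satisfies $\Delta h=\mathrm{div}(Y_2-Y_1)=0$. Injectivity in $(i)$ gives $h=0$, and hence $Y_1=Y_2$.

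\textbf{Orthogonality.} This is the one step that needs care. We integrate on the region $\Omega_r$ bounded by the coordinate sphere $\{|x|=r\}$ and use the divergence theorem:
$$\int_{\Omega_r} g(\nabla f,Y)\,d\mu=\int_{\Omega_r}\mathrm{div}(fY)\,d\mu=\int_{\{|x|=r\}} f\,g(Y,\nu)\,d\sigma .$$
The first equality uses $\mathrm{div}(fY)=g(\nabla f,Y)+f\,\mathrm{div}(Y)$ and $\mathrm{div}(Y)=0$.

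On the sphere, $|f|\le Cr^{-\delta}$ and $|Y|\le Cr^{-1-\delta}$, while the area is $O(r^{n-1})$. The boundary term is therefore $O(r^{n-2-2\delta})$, which tends to $0$ precisely because $\delta>(n-2)/2$.

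The same decay shows $|\nabla f||Y|$, $|\nabla f|^2$ and $|Y|^2$ are all $O(r^{-2-2\delta})$, and $2+2\delta>n$. So all three are integrable, and both fields lie in $L^2$. Letting $r\to\infty$ gives $\int_M g(\nabla f,Y)\,d\mu=0$, which is the claimed $L^2$--orthogonality.

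The whole argument is thus a direct consequence of fact $(i)$. The only delicate point is the boundary term in the orthogonality step, where the lower bound $\delta>(n-2)/2$ is exactly what is needed.
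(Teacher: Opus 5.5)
Your proposal is correct and follows essentially the same route as the paper: you solve $\Delta f=\mathrm{div}(X)$ via the isomorphism $(i)$, set $Y=X-\nabla f$, and get $L^2$--orthogonality by integrating $\mathrm{div}(fY)$ over the compact region bounded by $\{|x|=r\}$, where the boundary term is $O(r^{n-2-2\delta})\to0$ because $\delta>(n-2)/2$. Your explicit uniqueness argument via injectivity of $\Delta$ on $C^{2,\alpha}_{-\delta}(M)$ simply spells out what the paper leaves implicit.
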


\begin{proof}
Since $\mathrm{div}(X)\in C^{0,\alpha}_{-2-\delta}(M)$ as a consequence of the assumption $X\in C^{1,\alpha}_{-1-\delta}(M;TM)$, result $(i)$ implies the existence of a unique function $f\in C^{2,\alpha}_{-\delta}(M)$ such that $\Delta f=\mathrm{div}(X)$. Notice that $f$ is smooth since $X$ is a smooth vector field on $M$, by the standard interior elliptic regularity theory. The smooth vector field $Y$ is then obtained uniquely as the difference $X-\nabla f$. Moreover, since $\nabla f$ and $Y$ belong to $C^{1,\alpha}_{-1-\delta}(M;TM)$ and since $2\delta>n-2$, it is straightforward that $\nabla f,Y\in L^2(M;TM)$. 
Now, we consider a generic asymptotically flat chart $\big(U_\infty, (x^1,\dots,x^n)\big)$, and let $M_R=M\setminus\{\vert x\vert>R\}$ be the compact region bounded by the coordinate sphere $\{\vert x\vert=R\}$. Since $\mathrm{div}Y=0$, integration by parts gives
\begin{equation}\label{feq1}
\int\limits_{M_R}g(\nabla f,Y)\,d\mu=\!\!\int\limits_{\{\vert x\vert=R\}}\!\!f\,g(Y,\nu)\,d\sigma .
\end{equation}
Therefore, by using the following asymptotic behaviors
$$
f=O(\vert x\vert^{-\delta}),
\qquad
|Y|=O(\vert x\vert^{-1-\delta}),
\qquad
\mathrm{Area}(\{\vert x\vert=R\})=O(R^{n-1}),
$$
and that $2\delta>n-2$, the right--hand side of equality \eqref{feq1} tends to zero as $R\to+\infty$. By Lebesgue's dominated convergence theorem, we then conclude that
$$\int_M g(\nabla f,Y)\,d\mu=0.$$
\end{proof}

An important consequence of the weighted gradient--divergence-free decomposition is contained in the following remark.

\begin{remark}\label{rem:Xadmequivmf}
Let $(M,g)$ be an $n$--dimensional, connected, complete, asymptotically flat manifold, with $n\geq3$, and let $X$ be an admissible smooth vector field such that $X\in C^{1,\alpha}_{-1-\delta}(M;TM)$ for some $\alpha\in (0,1)$ and $\delta\in \big((n-2)/2, n-2\big)$. Assume that $\mathrm{R}+2\,\mathrm{div}(X)\in L^1(M,g)$. By the weighted gradient--divergence-free decomposition of $X$, this assumption is equivalent to $\mathrm{R}+2\,\Delta f\in L^1(M,g)$. 
Moreover, we have
\begin{align*}
m_{X}&=\frac{1}{2(n-1)|\SSS^{n-1}|}\lim_{r\to +\infty}\int\limits_{\{\vert x \vert\,=\,r\}}\!\!\!\delta^{i}_{j}(\delta^{kl}\partial_{k}g_{il}-\delta^{kl}\partial_{i}g_{kl}+2\delta_{ik}X^k)\frac{x^{j}}{\vert x \vert}\,d\sigma_{\mathrm{eucl}}\\
&=\frac{1}{2(n-1)|\SSS^{n-1}|}\lim_{r\to +\infty}\int\limits_{\{\vert x \vert\,=\,r\}}\!\!\!\delta^{i}_{j}(\delta^{kl}\partial_{k}g_{il}-\delta^{kl}\partial_{i}g_{kl}+2\delta_{ik} \nabla f^k+2\delta_{ik}Y^k)\frac{x^{j}}{\vert x \vert}\,d\sigma_{\mathrm{eucl}}\\
&=\frac{1}{2(n-1)|\SSS^{n-1}|}\lim_{r\to +\infty}\bigg[\,\int\limits_{\{\vert x \vert\,=\,r\}}\!\!\!\delta^{i}_{j}(\delta^{kl}\partial_{k}g_{il}-\delta^{kl}\partial_{i}g_{kl}+2\delta_{i}^{k}\partial_k f)\frac{x^{j}}{\vert x \vert}\,d\sigma_{\mathrm{eucl}}+\!\!\!\int\limits_{\{\vert x \vert\,=\,r\}}\!\!\! 2g_{ij}Y^i\nu^j\,d\sigma +o(1) \,\bigg]\\
&=\frac{1}{2(n-1)|\SSS^{n-1}|}\lim_{r\to +\infty}\int\limits_{\{\vert x \vert\,=\,r\}}\!\!\!\delta^{i}_{j}(\delta^{kl}\partial_{k}g_{il}-\delta^{kl}\partial_{i}g_{kl}+2\delta_{i}^{k}\partial_k f)\frac{x^{j}}{\vert x \vert}\,d\sigma_{\mathrm{eucl}}, 
\end{align*}
where the penultimate equality follows from the observation that $\tau+\delta>n-2$ and the last one is a consequence of the divergence theorem and of $\mathrm{div}(Y)=0$ on $M$.
\end{remark}

{\em From now on, by replacing $\delta$ or $\tau$ with a smaller value if necessary, we may assume $\delta=\tau$.}

\section{Proof of main Theorem \ref{GenPMT}}\label{SectionProofmain}
Let $(M,g)$ be an $n$--dimensional, connected, complete, asymptotically flat manifold, with $n\geq3$, and let $X$ be an admissible smooth vector field satisfying $X\in C^{1,\alpha}_{-1-\delta}(M;TM)$ for some $\alpha\in (0,1)$ and $\delta\in \big((n-2)/2, n-2\big)$.
Starting from the weighted gradient--divergence-free decomposition of $X$, let us seek a smooth function $w\in  C^{2,\alpha}_{-\delta}(M)$ such that $\widetilde{X}=\nabla (f+ w)$ is an admissible smooth vector field and satisfies $$\mathrm{R}_{\widetilde{X}}^{(1-n)}=\mathrm{R}_{X}^{(1-n)}.$$
More precisely, we seek a smooth function $w\in  C^{2,\alpha}_{-\delta}(M)$ that solves the semilinear second--order partial differential equation
\begin{equation}\label{eq:Ew}
        \Delta w-a\,g(\nabla f,\nabla w)-\frac a2|\nabla w|^2
        =-a\,g(\nabla f,Y)-\frac a2|Y|^2
\end{equation}
on $M$, where $ a=(n-2)/(n-1)$.
Indeed, the identities
\begin{align}
\mathrm{R}_{\widetilde{X}}^{(1-n)}&=\mathrm{R}+2\Delta f+2\Delta w-a\left(\vert \nabla f\vert^2+\vert \nabla w\vert^2+2g(\nabla f,\nabla w)\right)\\
\mathrm{R}_{X}^{(1-n)}&=\mathrm{R}+2\Delta f-a\left(\vert \nabla f\vert^2+\vert Y\vert^2+2g(\nabla f,Y)\right)
\end{align}
are true by definition and using $\mathrm{div}(Y)=0$, and their difference vanishes exactly when \eqref{eq:Ew} holds.

In order to find the function $w$, we will show that there exists a unique smooth positive function
$$u=1+v, \qquad v\in C^{2,\alpha}_{-\delta}(M)\cap C^{\infty}(M),$$
that satisfies the second--order elliptic linear partial differential equation
\begin{equation}\label{eq:L-u-main}
        Lu:=\Delta u-a\,g\left(\nabla f,\nabla u\right)-Vu=0 
\end{equation}
on $M$ and tends to $1$ at infinity, where
\begin{equation}\label{eq:potential-V-main}
        V=\frac{a^2}{2}\,g(\nabla f,Y)+\frac{a^2}{4\,}\vert Y\vert^2.
\end{equation}
Indeed, if $u$ is as above, the smooth function
\begin{equation}\label{eq:w-def-main}
        w=-\frac{2}{a}\log u,
\end{equation}
is well--defined, immediately belongs to $C^{2,\alpha}_{-\delta}(M)$, and satisfies the equation \eqref{eq:Ew}. The last claim follows from straightforward calculations, as
\begin{align*}
\nabla w&=-\frac{2}{a}\, \frac{\nabla u}{u},\\
\Delta w&=-\frac{2}{a}\,\frac{\Delta u}{u}+\frac{2}{a} \,\frac{\vert \nabla u\vert^2}{u^2}.
\end{align*} 

In the next subsection, we will show that such a function $u$ exists and is unique.

\subsection{Existence, uniqueness and positivity of the solution $u$}

In this subsection, we study second--order linear elliptic operators $L$ of the form
\begin{equation}\label{fLoperator}
L\phi:=\Delta \phi-a\,g\left(\nabla f,\nabla \phi\right)-V\phi 
\end{equation}
for any $\phi\in C^2(M)$, where 
$$a=(n-2)/(n-1),\quad\,\, f\in C^\infty(M)\cap C^{2,\alpha}_{-\delta}(M),\quad\,\,\delta\in \big((n-2)/2, n-2\big), \quad\,\,
        V=\frac{a^2}{2}\,g(\nabla f,Y)+\frac{a^2}{4\,}\vert Y\vert^2,$$
and $Y$ is a divergence--free smooth vector field satisfying $Y\in C^{1,\alpha}_{-1-\delta}(M;TM)$. 

We notice that, since $\nabla f$ and $Y$ belong to $C^{1,\alpha}_{-1-\delta}(M;TM)$, the continuity of multiplication in weighted H\"{o}lder spaces gives $g(\nabla f,Y),\; |Y|^2\in C^{0,\alpha}_{-2-2\delta}(M)$. Consequently, $V\in C^{0,\alpha}_{-2-2\delta}(M)$. By the inclusion property of weighted H\"{o}lder spaces, $V\in C^{0,\alpha}_{-2-\delta}(M)$ as $\delta>0$. As a consequence, we can apply the weighted Fredholm theory to the operator $L$.\\
Finally, we also observe that, since $g(\nabla f,Y)$ and $|Y|^2$ are smooth functions, $V\in C^{\infty}(M)$.

A key result is the following lemma, together with its weak extension stated in Remark \ref{rmk:weak-perfect-square}.

\begin{lemma}[The perfect--square identity]\label{lem:square}
For every $\phi\in C_c^\infty(M)$,
\begin{equation}\label{eq:square-diagonal}
        -\int\limits_M e^{-af}\phi L\phi\,d\mu
        =\int\limits_M e^{-af}\left|\nabla\phi+\frac a2\phi Y\right|^2\,d\mu.
\end{equation}
More generally, for all $\psi,\eta\in C_c^\infty(M)$,
\begin{equation}\label{eq:square-bilinear}
        -\int\limits_M e^{-af}\psi L\eta\,d\mu
        =\int\limits_M e^{-af}g\!\left(
        \nabla\psi+\frac a2\psi Y,
        \nabla\eta+\frac a2\eta Y
        \right) d\mu.
\end{equation}
\end{lemma}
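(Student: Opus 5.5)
The plan is to prove the bilinear identity \eqref{eq:square-bilinear} first; the diagonal identity \eqref{eq:square-diagonal} then follows by taking $\psi=\eta=\phi$. The key observation is that the drift term combines with the weight:
\[
e^{-af}\bigl(\Delta\eta-a\,g(\nabla f,\nabla\eta)\bigr)=\mathrm{div}\bigl(e^{-af}\nabla\eta\bigr).
\]
Hence
\[
-\int_M e^{-af}\psi L\eta\,d\mu=-\int_M\psi\,\mathrm{div}\bigl(e^{-af}\nabla\eta\bigr)\,d\mu+\int_M e^{-af}V\psi\eta\,d\mu .
\]
Since $\psi$ has compact support, integrating by parts produces no boundary terms. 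The first integral becomes $\int_M e^{-af}g(\nabla\psi,\nabla\eta)\,d\mu$.

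Next I would expand the right-hand side of \eqref{eq:square-bilinear}:
\[
e^{-af}\Bigl[g(\nabla\psi,\nabla\eta)+\tfrac a2\,g\bigl(Y,\psi\nabla\eta+\eta\nabla\psi\bigr)+\tfrac{a^2}{4}\psi\eta|Y|^2\Bigr].
\]
The cross term is $\tfrac a2\,e^{-af}g(Y,\nabla(\psi\eta))$. I integrate it by parts, again without boundary terms. Since $\mathrm{div}\,Y=0$, we have $\mathrm{div}(e^{-af}Y)=-a\,e^{-af}g(\nabla f,Y)$, so
\[
\int_M\tfrac a2\,e^{-af}g\bigl(Y,\nabla(\psi\eta)\bigr)\,d\mu=\int_M\tfrac{a^2}{2}\,e^{-af}g(\nabla f,Y)\,\psi\eta\,d\mu .
\]
Adding the $|Y|^2$ term, the zeroth-order contributions on the right-hand side total $e^{-af}\psi\eta\bigl(\tfrac{a^2}{2}g(\nabla f,Y)+\tfrac{a^2}{4}|Y|^2\bigr)=e^{-af}V\psi\eta$. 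This matches the left-hand side exactly, which proves \eqref{eq:square-bilinear}.

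There is no real obstacle here. The only delicate points are the signs and the factor conventions: $V$ was chosen in \eqref{eq:potential-V-main} precisely so that the divergence-free property of $Y$ converts the cross term into $\tfrac{a^2}{2}g(\nabla f,Y)$. Smoothness of $f$, $Y$ and $V$ justifies every step for compactly supported test functions.
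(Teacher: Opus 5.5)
Your proposal is correct and follows essentially the same route as the paper: rewrite $e^{-af}L\eta=\mathrm{div}(e^{-af}\nabla\eta)-e^{-af}V\eta$, integrate by parts, expand the right-hand side, and convert the cross term via $\mathrm{div}\,Y=0$ into $\tfrac{a^2}{2}\int_M e^{-af}\psi\eta\, g(\nabla f,Y)\,d\mu$. Together with the $\tfrac{a^2}{4}|Y|^2$ term, this reproduces exactly $\int_M e^{-af}V\psi\eta\,d\mu$.
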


\begin{proof}
We start by noticing that
$$e^{-af}L\eta= \mathrm{div}(e^{-af}\nabla \eta) -e^{-af}V\eta.$$
Then, multiplying by $-\psi$ and integrating by parts, we obtain
$$
        -\int\limits_M e^{-af}\psi L\eta\,d\mu
        =\int\limits_M e^{-af}g(\nabla\psi,\nabla\eta)\,d\mu
        +\int\limits_M e^{-af}V\psi\eta\,d\mu.
$$
On the other hand, we have
\begin{align}
&\int\limits_M e^{-af}g\!\left(
        \nabla\psi+\frac a2\psi Y,
        \nabla\eta+\frac a2\eta Y
        \right)\,d\mu \\
&\quad =\int\limits_M e^{-af}g\!\left(\nabla\psi,\,\nabla\eta\right)\,d\mu
+\frac a2\int\limits_M e^{-af}g\!\left(\nabla(\psi\eta),\,Y\right)\,d\mu
+\frac{a^2}{4}\int\limits_M e^{-af}\psi\eta |Y|^2 \,d\mu\\
&\quad =\int\limits_M e^{-af}g\!\left(\nabla\psi,\,\nabla\eta\right)\,d\mu
+\frac{a^2}{2} \int\limits_M e^{-af}\psi\eta\, g(\nabla f,Y)\,d\mu
+\frac{a^2}{4}\int\limits_M e^{-af}\psi\eta |Y|^2 \,d\mu
\end{align}
where the last equality follows from integrating by parts the second integral and using $\mathrm{div}(Y)=0$. 
The last two terms give exactly $\int\limits_{M} e^{-af}V\psi\eta\,d\mu$.
\end{proof}

\begin{remark}\label{rmk:weak-perfect-square}
Identity \eqref{eq:square-bilinear} remains valid when $\psi\in W^{1,2}(M)\cap C_c(M)$ and $\eta\in C^\infty(M)$.

Indeed, let $\Omega\Subset M$ be a relatively compact open set such
that $\mathrm{supp}\psi\Subset\Omega$,
and choose a function $\chi\in C^\infty_c(M)$ satisfying $\chi\equiv1$
on an open neighborhood of $\overline{\Omega}$. By the completeness of $(M,g)$ and by the density of
$C^\infty_c(M)$ in $W^{1,2}(M)$, there exists a sequence $\{\psi_j\}_{j\in\N}\subset C^\infty_c(M)$ such that $\psi_j\to\psi$ in $W^{1,2}(M)$.
Applying then identity \eqref{eq:square-bilinear} to $\psi_j$ and $\chi\eta$, we obtain
$$
-\int\limits_M e^{-af}\psi_j L(\chi\eta)\,d\mu
=
\int\limits_M e^{-af}
g\left(
\nabla\psi_j+\frac{a}{2}\psi_jY,\,
\nabla(\chi\eta)+\frac{a}{2}\chi\eta Y
\right)
\,d\mu,
$$
Therefore, passing to the limit as $j\to+\infty$, we get
$$
-\int\limits_M e^{-af}\psi  L(\chi\eta)\,d\mu
=
\int\limits_M e^{-af}
g\left(
\nabla\psi+\frac{a}{2}\psi Y,\,
\nabla(\chi\eta)+\frac{a}{2}\chi\eta Y
\right)
\,d\mu.
$$
The desired identity then follows as $\mathrm{supp}\psi\Subset\Omega$ and $\chi\equiv1$ on an open neighborhood of $\overline{\Omega}$.
\end{remark}

This central result allows us to establish the following triviality result for the operator $L$.

\begin{lemma}[Triviality of the decaying kernel]\label{lem:kernel}
If $\phi\in C^{2,\alpha}_{-\delta}(M)$ satisfies $L\phi=0$ on $M$, then $\phi\in C^{\infty}(M)$ and $\phi =0$ everywhere.
\end{lemma}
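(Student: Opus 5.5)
Smoothness of $\phi$ is immediate: the coefficients of $L$ ($\nabla f$ and $V$) are smooth, so standard interior elliptic regularity, applied to the $C^{2,\alpha}$ solution $\phi$ of $L\phi=0$, gives $\phi\in C^\infty(M)$. For vanishing, the plan is to test the equation against $\phi$ itself, cut off at large radius, and use the weak perfect--square identity of Remark~\ref{rmk:weak-perfect-square}.

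Fix an asymptotically flat chart and a standard cutoff $\chi_R\in C^\infty_c(M)$ with $\chi_R\equiv1$ on $M_R$, $\chi_R\equiv0$ outside $M_{2R}$, and $|\nabla\chi_R|\leq C/R$. Apply identity \eqref{eq:square-bilinear} with $\psi=\chi_R^2\phi$ and $\eta=\phi$. Since $L\phi=0$, the left-hand side vanishes, leaving
$$0=\int_M e^{-af}g\Big(\nabla(\chi_R^2\phi)+\tfrac a2\chi_R^2\phi Y,\ \nabla\phi+\tfrac a2\phi Y\Big)d\mu .$$
Write $W=\nabla\phi+\frac a2\phi Y$. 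Then $\nabla(\chi_R^2\phi)+\frac a2\chi_R^2\phi Y=\chi_R^2W+2\chi_R\phi\nabla\chi_R$, and therefore
$$\int_M e^{-af}\chi_R^2|W|^2\,d\mu=-2\int_M e^{-af}\chi_R\phi\, g(\nabla\chi_R,W)\,d\mu .$$
The weight $e^{-af}$ is bounded above and below because $f\to0$ at infinity.

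Next, show the right-hand side tends to $0$ as $R\to\infty$ using the decay. We have $\phi=O(r^{-\delta})$, $|W|=O(r^{-1-\delta})$ (recall $|Y|=O(r^{-1-\delta})$), and $|\nabla\chi_R|\le C/R$ supported on the annulus $R\le r\le 2R$ of volume $O(R^n)$. So the right-hand side is $O(R^{n-2-2\delta})$, which goes to $0$ because $2\delta>n-2$. Alternatively, $W\in L^2$ and Cauchy--Schwarz give the same conclusion. By monotone convergence, $\int_M e^{-af}|W|^2\,d\mu=0$, hence $\nabla\phi=-\frac a2\phi Y$ on $M$.

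Finally, deduce $\phi\equiv0$. This first-order equation gives, along any curve $\gamma$, the bound $|\frac{d}{dt}\phi(\gamma(t))|\le \frac a2|Y|\,|\phi|$. Hence if $\phi(p)=0$ at one point, Gr\"onwall along paths and connectedness of $M$ give $\phi\equiv0$. To produce a zero, suppose instead $\phi$ never vanishes; then it has a fixed sign, say $\phi>0$, and $\log\phi$ is defined with $|\nabla\log\phi|\le\frac a2|Y|=O(r^{-1-\delta})$. Integrating radially from a point at radius $r_0$ to infinity, $|\log\phi|$ stays bounded along the ray, so $\phi$ is bounded below by a positive constant far out. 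This contradicts $\phi\to0$ at infinity. So $\phi$ has a zero and therefore vanishes identically.

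The main obstacle is justifying the boundary/cutoff term, specifically applying Remark~\ref{rmk:weak-perfect-square} with a compactly supported $\psi=\chi_R^2\phi$ (smooth, hence admissible) and controlling the cross term. The decay exponent condition $\delta>(n-2)/2$ is exactly what makes that term vanish, and the last step relies on $\phi\to0$ to rule out a nonvanishing solution.
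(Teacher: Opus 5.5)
Your proof is correct and follows essentially the same route as the paper. Both arguments cut off the perfect--square identity at radius $R$, bound the resulting error by $O(R^{n-2-2\delta})$ using $\delta>(n-2)/2$, and derive $\nabla\phi=-\frac a2\phi Y$. Both then rule out $\phi\neq0$ by propagating zeros along curves and by a radial-ray argument against $\phi\to0$. The only difference is cosmetic: you test the bilinear identity with $\psi=\chi_R^2\phi$, $\eta=\phi$, which leaves a single cross term, whereas the paper applies the diagonal identity to $\chi_R\phi$ and estimates several cutoff terms separately.
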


\begin{proof}
By the smoothness of the functions $f$ and $V$ and by the standard interior elliptic regularity theory, it follows that the function $\phi$ is smooth.

We consider a generic asymptotically flat chart $\big(U_\infty, (x^1,\dots,x^n)\big)$, and
let $\chi_R$ be a cut--off function equal to $1$ on $M_R=M\setminus \{\vert x\vert> R\}$, supported in $M_{2R}=M\setminus\{\vert x\vert >2R\}$, and satisfying 
\begin{equation}
0\leqslant \chi_R\leqslant 1 \quad\text{and}\quad|\nabla^{\mathrm{eucl}}\chi_R|\leqslant C/R\quad \text{and}\quad \vert \nabla^{\mathrm{eucl}} d\chi_R\vert\leqslant C/R^2, 
\end{equation}
where $C$ is a positive constant independent of $R$, for any $R>0$ sufficiently large. We apply the identity \eqref{eq:square-diagonal} to the function $\chi_R\phi\in C^{\infty}_c(M)$, thereby obtaining
\begin{align}
&\int\limits_M e^{-af}\chi_R^2\left|\nabla\phi+\frac a2\phi Y\right|^2\,d\mu\,+\, \int\limits_M e^{-af}\Big[\phi^2 \vert \nabla \chi_R\vert^2+2\phi\chi_Rg(\nabla \chi_R,\nabla\phi)+ a\phi^2\chi_R g(\nabla \chi_R,Y)\Big]\,d\mu\,\\
&\quad= -\int\limits_M e^{-af}\phi\chi_R\Big[\chi_R L\phi+\phi  \Delta\chi_R+2 g(\nabla \chi_R,\nabla \phi)-a\phi g(\nabla \chi_R,\nabla f)\Big] \,d\mu.
\end{align}
Then, by using $L\phi=0$, we have
\begin{align*}\label{eq:cutoff-kernel-estimate}
       & \int\limits_M e^{-af}\chi_R^2\left|\nabla\phi+\frac a2\phi Y\right|^2\, d\mu\\
        &\,\,\,\leqslant\!\! \!\!\!\int\limits_{M_{2R}\setminus M_R}\!\!\!\!\!\!\!e^{-af}\Big[\phi^2|\nabla\chi_R|^2\!+\phi^2\chi_R|\Delta\chi_R|+4\vert\phi \vert\chi_R \vert \nabla \chi_R\vert\,\vert \nabla \phi\vert+a\phi^2 \chi_R \vert \nabla \chi_R\vert\, \vert Y\vert+ a\phi^2\chi_R \vert \nabla \chi_R\vert\, \vert \nabla f\vert \Big]\,d\mu\\
        &\,\,\,\leqslant \widetilde{C}\, R^{n-2-2\delta}
\end{align*}
for a positive constant $\widetilde{C}$ independent of $R$, for any $R>0$ sufficiently large. Here, we used the asymptotic behaviors in the asymptotically flat chart of the metric and related quantities.
Now, we notice that the right-hand side tends to zero as $R\to +\infty$ because $\delta>(n-2)/2$. Therefore, passing to the limit as $R\to +\infty$, Fatou's lemma yields
\begin{equation}
 \int\limits_M e^{-af}\left|\nabla\phi+\frac a2\phi Y\right|^2\, d\mu=0
\end{equation}
which in turn implies
\begin{equation}\label{eq:first-order-kernel}
        \nabla\phi+\frac a2\phi Y=0
\end{equation}
everywhere (taking into account that the function $\phi$ and the vector field $Y$ are smooth). Let us show that this identity, together with the
decay of $\phi$, implies that $\phi\equiv 0$.
Let $\gamma\colon[0,T]\to M$ be an arbitrary smooth curve.
By identity \eqref{eq:first-order-kernel}, the function $h(t)=\phi(\gamma(t))$, defined in the interval $[0,T]$, satisfies the ordinary differential
equation
$$h'(t)=-\frac{a}{2}\,h(t)\,g\bigl(Y, \gamma'(t)\bigr).$$
Hence, we get
\begin{equation}\label{eq:phi-along-curves}
\phi(\gamma(t))=h(t)
=h(0)\,e^{-\frac{a}{2}
\int_0^t g(Y, \gamma'(\tau))\,d\tau}
=\phi(\gamma(0))\,e^{-\frac{a}{2}
\int_0^t g(Y, \gamma'(\tau))\,d\tau}.
\end{equation}
As a consequence, if $\phi$ vanishes at one point of $M$, then the equality
\eqref{eq:phi-along-curves} shows that it vanishes along every smooth curve starting at that point. The connectedness of $M$, which implies that any two points of $M$ can be joined by such a curve, yields $\phi\equiv 0$. It remains to rule out the possibility that $\phi$ is nowhere vanishing. Suppose, by contradiction, that this is the case. 
For a fixed asymptotically flat chart $\big(U_\infty, (x^1,\dots,x^n)\big)$, we choose a smooth curve $\gamma\colon[0,+\infty)\longrightarrow M_{\infty}$ which coincides with a radial coordinate curve. With this choice, from $Y\in C^{1,\alpha}_{-1-\delta}(M;TM)$ it follows that $$\vert g(Y,\gamma'(t))\vert \leqslant \widehat{C}(1+t)^{-1-\delta} $$ for every $t\in [0,+\infty)$, where $\widehat{C}$ is a positive constant independent of $t$. Since $\delta>0$, we have
$$\int\limits_0^{+\infty}\bigl|g(Y,\gamma'(t))\bigr|\,dt<+\infty.$$
Then, arguing as before and passing to the limit in \eqref{eq:phi-along-curves}, we conclude that
$$
\lim_{t\to+\infty}
|\phi(\gamma(t))|
=
|\phi(\gamma(0))|
\,e^{-\frac{a}{2}
\int_0^{+\infty} \!\!g(Y, \gamma'(\tau))\,d\tau}
>0.$$
On the other hand, since $\phi\in C^{2,\alpha}_{-\delta}(M)$, we have $\phi \to 0$ at infinity, which is a contradiction. Therefore, $\phi=0$ on all of $M$.
\end{proof}

Now, we are able to establish the following result of existence and uniqueness.

\begin{theorem}[Invertibility and existence]\label{thm:invertibility}
The operator
\begin{equation}\label{eq:L-map}
        L:C^{2,\alpha}_{-\delta}(M)\longrightarrow C^{0,\alpha}_{-\delta-2}(M)
\end{equation}
is an isomorphism. Consequently, there exists a unique function $v\in C^{2,\alpha}_{-\delta}(M)\cap C^{\infty}(M)$ solving
\begin{equation}\label{eq:v-equation}
        Lv=V,
\end{equation}
and the smooth function $u=1+v$ satisfies
\begin{equation}\label{eq:u-existence}
        Lu=0  \qquad \text{and} \qquad u\to1 \quad \text{at infinity}.
\end{equation}
\end{theorem}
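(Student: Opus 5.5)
The plan is to combine the weighted Fredholm theory recalled in result $(ii)$ with the kernel triviality of Lemma \ref{lem:kernel}. First, I will check that $L$ fits the framework of $(ii)$. Up to a sign, $L$ equals $-\Delta+\nabla_Z+h$ with $Z=a\nabla f$ and $h=V$. Since $f\in C^{2,\alpha}_{-\delta}(M)$, we have $Z\in C^{1,\alpha}_{-1-\delta}(M;TM)\subset C^{0,\alpha}_{-1-\delta}(M;TM)$. We already observed that $V\in C^{0,\alpha}_{-2-2\delta}(M)\subset C^{0,\alpha}_{-2-\delta}(M)$. Because $0<\delta<n-2$, result $(ii)$ then shows that $-L$, and hence $L$, is a Fredholm operator of index zero from $C^{2,\alpha}_{-\delta}(M)$ to $C^{0,\alpha}_{-\delta-2}(M)$. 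The same weighted Hölder estimates also show that $L$ is bounded between these spaces.

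Second, Lemma \ref{lem:kernel} shows that the kernel of $L$ in $C^{2,\alpha}_{-\delta}(M)$ is trivial. Since the index is zero, the cokernel is also trivial, so $L$ is surjective. A bounded linear bijection between Banach spaces has a bounded inverse by the open mapping theorem, so $L$ is an isomorphism.

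Third, I will produce $v$. The right-hand side $V$ lies in $C^{0,\alpha}_{-2-\delta}(M)$, so there is a unique $v\in C^{2,\alpha}_{-\delta}(M)$ with $Lv=V$. The coefficients of $L$ and the datum $V$ are smooth, so standard interior elliptic (Schauder) regularity, bootstrapped, gives $v\in C^\infty(M)$. Setting $u=1+v$, and using that constants have zero gradient and Laplacian, we get
\[
Lu=L1+Lv=-V+V=0 .
\]
Moreover $v=O(r^{-\delta})$ with $\delta>0$, so $u\to 1$ at infinity. Uniqueness of $u$ in the class $1+C^{2,\alpha}_{-\delta}(M)$ follows because the difference of two such solutions lies in the kernel of $L$.

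I expect no serious obstacle, since the analytic input is entirely contained in $(ii)$ and in Lemma \ref{lem:kernel}. The one point needing care is matching the sign conventions and the weight ranges of $(ii)$, which requires $0<\delta<n-2$; this holds because $\delta\in((n-2)/2,n-2)$. The theorem does not ask for positivity of $u$; that would be handled separately, for example by a maximum principle argument or by testing against $u^-$ using Remark \ref{rmk:weak-perfect-square}.
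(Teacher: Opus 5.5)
Your proposal is correct and follows the paper's proof essentially step for step. You obtain Fredholm index zero from $(ii)$ with $Z=a\nabla f$ and $h=V$, get a trivial kernel from Lemma \ref{lem:kernel} and hence an isomorphism, solve $Lv=V$ with smoothness from interior elliptic regularity, and conclude $Lu=L1+Lv=-V+V=0$ for $u=1+v$.
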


\begin{proof}
The operator $L$ is a lower--order perturbation of the Laplacian operator, having $\nabla f\in C^{0,\alpha}_{-1-\delta}(M;TM)\cap  C^{\infty}(M;TM)$ and $V\in C^{0,\alpha}_{-2-\delta}(M)\cap C^\infty(M)$ with $\delta\in  \big((n-2)/2, n-2\big)$. 
Weighted elliptic Fredholm theory implies that $L:C^{2,\alpha}_{-\delta}(M)\longrightarrow C^{0,\alpha}_{-\delta-2}(M)$ is a Fredholm operator with index zero (see $(ii)$ at the beginning of Section \ref{sectWeighted gradient--divergence-free decomposition}). Lemma \ref{lem:kernel} shows that its kernel is trivial, hence, $L$ is an isomorphism. Consequently, there is a unique function $v\in C^{2,\alpha}_{-\delta}(M)$ solving $Lv=V$, as $V\in C^{0,\alpha}_{-\delta-2}(M)$. Notice that the function $v$ is smooth, by the standard interior elliptic regularity theory. Finally, we observe that $u=1+v$ satisfies $Lu=L1+Lv=-V+V=0$.
\end{proof}

The last part of this subsection aims to show that the above function $u$ is positive on $M$.

\begin{proposition}[Strict positivity]\label{prop:positivity}
The solution $u$ of \eqref{eq:u-existence} is strictly positive on $M$.
\end{proposition}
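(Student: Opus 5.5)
The plan is to prove positivity in two stages: first $u\geq 0$, by testing the perfect--square identity against the negative part of $u$; then $u>0$ everywhere, either by the strong maximum principle or by the same ODE argument along curves used in Lemma \ref{lem:kernel}.

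\emph{Step 1: $u\geq0$.} Since $u\to1$ at infinity, there is $R_0$ such that $u\geq 1/2$ on $\{|x|>R_0\}$. Hence $u^-=\max\{-u,0\}$ is Lipschitz with compact support in $M_{R_0}$, so $u^-\in W^{1,2}(M)\cap C_c(M)$. Apply Remark \ref{rmk:weak-perfect-square} with $\psi=u^-$ and $\eta=u$. Since $Lu=0$, the left-hand side vanishes and
\begin{equation}
0=\int\limits_M e^{-af}\, g\!\left(\nabla u^-+\frac a2 u^-Y,\ \nabla u+\frac a2 uY\right)d\mu .
\end{equation}
On the support of $u^-$ we have $u=-u^-$, and $\nabla u=-\nabla u^-$ almost everywhere there, so the integrand equals $-e^{-af}\bigl|\nabla u^-+\frac a2 u^-Y\bigr|^2$. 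Therefore $\nabla u^-+\frac a2u^-Y=0$ almost everywhere. This is a first-order linear equation for the Lipschitz function $u^-$. Integrating along curves as in \eqref{eq:phi-along-curves}, or equivalently noting that $u^-e^{\frac a2\int g(Y,\cdot)}$ is locally constant along curves, we see that $u^-$ either vanishes identically or never vanishes on the connected manifold $M$. Since $u^-$ vanishes near infinity, $u^-\equiv0$, that is, $u\geq0$. To avoid subtleties with the ODE for a merely Lipschitz function, I would argue locally: $u^-\exp\!\bigl(\tfrac a2\int_\gamma g(Y,\gamma')\bigr)$ has zero derivative along a.e.\ curve in a local chart (or one uses that $\nabla(\log)$ arguments fail only at zeros). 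A cleaner route is to apply the vanishing-set argument to the open set $\{u<0\}$: on it $u$ is smooth and satisfies $\nabla u+\frac a2uY=0$, so $|u|$ is locally constant up to the exponential factor, and it cannot tend to $0$ at boundary points of this set. Hence the set is empty.

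\emph{Step 2: strict positivity.} Now $u\geq0$ is smooth and satisfies $\Delta u-a\,g(\nabla f,\nabla u)-Vu=0$. Write $V=V^+-V^-$. Then $\Delta u-a\,g(\nabla f,\nabla u)-V^+u=-V^-u\leq0$, so $u$ is a nonnegative supersolution of an operator with nonnegative zeroth-order coefficient. By the strong maximum principle (Hopf), if $u$ vanished at an interior point, it would vanish identically on the connected manifold $M$. This contradicts $u\to1$ at infinity. Hence $u>0$ on $M$.

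The main obstacle is the regularity bookkeeping in Step 1: justifying that $\nabla u^-=-\nabla u\,\mathbf 1_{\{u<0\}}$ almost everywhere, and extracting $u^-\equiv0$ from the vanishing of the square. I would handle the latter via the open set $\{u<0\}$, where everything is smooth, rather than differentiating $u^-$ across the zero set.
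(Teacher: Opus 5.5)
Your proposal is correct and essentially reproduces the paper's proof. You test the weak perfect-square identity against $u_-$, then run the ODE along a curve from a point of the open set $\{u<0\}$ to its boundary, where $u=0$ contradicts the nonvanishing exponential factor. The only difference is cosmetic: you get strict positivity from the strong maximum principle (after splitting $V=V^+-V^-$), whereas the paper invokes the Harnack inequality, and either standard tool works here.
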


\begin{proof}
We recall that the solution $u$ is smooth. Let us consider the negative part $u_-$ of $u$, which is given by $u_-=\max\{-u,0\}$. Notice that: $u_-$ is locally Lipschitz on $M$, therefore, $u_-$ is differentiable almost everywhere on $M$, by Rademacher's theorem; it has $\nabla u_-=-\mathbb{I}_{\{u<0\}}\nabla u$ almost everywhere on $M$ (where $\mathbb{I}_E$ denotes the characteristic function of a set $E$); and it has compact support since $u\to 1$ at infinity. By the first and third property of $u_-$, we can apply Remark~\ref{rmk:weak-perfect-square} with $\psi=u_-$ and $\eta=u$. Using $Lu=0$ on $M$, we then obtain
\begin{align*}
0=-\int\limits_M e^{-af}u_-\,Lu\,d\mu=
\int\limits_M e^{-af}
g\left(
\nabla u_-+\frac{a}{2} u_- \,Y,\,
\nabla u+\frac{a}{2} u Y
\right)
\,d\mu=-\!\!\!\int\limits_{\{u<0\}}\!\!\!
e^{-af}
\left|
\nabla u+\frac{a}{2}uY
\right|^2
\,d\mu,
\end{align*}
which implies that, if $\{u<0\}$ is not empty, then
$$\nabla u+\frac{a}{2}uY=0\quad\quad \text{on $\{u<0\}$}.$$
Suppose, by contradiction, that $\{u<0\}$ is not empty. The continuity of $u$ and the convergence of $u$ to $1$ at infinity guarantee that this set is both open and relatively compact. Thus, there exists a connected component $\Omega$ such that $\partial \Omega\subset \{u=0\}$. This leads to the existence of a smooth curve $\gamma:[0,T]\to M$ joining a point $p\in \Omega$ with a point $q\in \partial\Omega$ and such that $\gamma |_{[0,T)}$ is contained in $\Omega$. Arguing as in the proof of Lemma \ref{lem:kernel}, we get 
\begin{equation}\label{eq:u-along-curves}
u(\gamma(t))=u(\gamma(0))\,e^{-\frac{a}{2}
\int_0^t g(Y, \gamma'(\tau))\,d\tau}=u(p)\,e^{-\frac{a}{2}
\int_0^t g(Y, \gamma'(\tau))\,d\tau}
\end{equation}
for every $t\in [0,T)$. Passing to the limit as $t\to T$, we conclude 
$$0=u(q)=u(p)\,e^{-\frac{a}{2}
\int_0^T g(Y, \gamma'(\tau))\,d\tau}<0,$$
which is a contradiction, thus, $u\geq 0$ on $M$. It remains to prove that $u$ is positive on $M$.
Since $M$ is connected and $u\geq 0$, the Harnack inequality (applied to appropriate coordinate domains) implies that either $u> 0$ everywhere or $u\equiv 0$. The latter is impossible because $u\to 1$ at infinity.
\end{proof}

\subsection{Improvement of asymptotic expansion of the function $u$}

This subsection is dedicated to improving the asymptotic expansion of $v$ and, consequently, of $u$ at infinity.

We start by recalling that since $\nabla f$ and $Y$ belong to $C^{1,\alpha}_{-1-\delta}(M;TM)$, it holds that $V\in C^{0,\alpha}_{-2-2\delta}(M)$. Moreover, because $v$ belongs to $C^{2,\alpha}_{-\delta}(M)$, the equation $Lv=V$ implies that $\Delta v \in C^{0,\alpha}_{-2-2\delta}(M)$ by the properties of H\"{o}lder spaces.

Choose $\beta\in \big(n-2, 2\delta\big)$ such that $-\beta \notin\Lambda$, where $\Lambda$ is the exceptional set $\mathbb{Z}\setminus(2-n,0)$. Since $\Delta v \in C^{0,\alpha}_{-2-2\delta}(M)\subset C^{0,\alpha}_{-2-\beta}(M)$ and $v\in C^{2,\alpha}_{-\delta}(M)$, we may apply \cite[Corollary A.38]{Leebook}. Notice that the largest exceptional weight in $(-\beta,-\delta)$ is $2-n$. Then, by \cite[Corollary A.38]{Leebook} (see also Remark A.39), there exists $B\in\R$ and $\gamma>0$ such that
\begin{equation}\label{feq4}
v=B\,\vert x\vert^{2-n}\!+O_2(\vert x\vert^{2-n-\gamma})
\end{equation}
in an arbitrary asymptotically flat chart $\big(M_{\infty}, (x^1,\dots,x^n)\big)$. 

This asymptotic expansion of $v$ implies that 
\begin{equation}\label{betterexpu}
u=1+B\,\vert x\vert^{2-n}\!+O_2(\vert x\vert^{2-n-\gamma}).
\end{equation}
We claim that 
\begin{equation}\label{eq:B-formula}
        B=-\frac{1}{(n-2)|\mathbb{S}^{n-1}|}
        \int\limits_M e^{-af}\left|\nabla u+\frac a2 uY\right|^2\,d\mu\,\leqslant \,0.
\end{equation}
In order to show this equality, we denote by $M_R$ the compact region $M\setminus \{\vert x\vert> R\}$ bounded by a large coordinate sphere $\{\vert x\vert=R\}$. By using the equation of $u$, the definition of $V$ and that $\mathrm{div}(Y)=0$, we have 
$$0=-e^{-af}uLu=-\mathrm{div}\left[ e^{-af}u\left(\nabla u+\frac{a}{2}u Y\right)\right]+e^{-af}\left|\nabla u+\frac a2uY\right|^2$$
and hence
\begin{align}
        \int\limits_{M_R}e^{-af}\left|\nabla u+\frac a2uY\right|^2d\mu
        &=\!\!\!\int\limits_{\{\vert x\vert =R\}}\!\!\!e^{-af}u\, g\!\left(\nabla u+\frac{a}{2}u Y,\nu\right)\,d\sigma\\
        &=\!\!\!\int\limits_{\{\vert x\vert =R\}}\!\!\!e^{-af}u\, g\!\left(\nabla u,\nu\right)\,d\sigma+\frac{a}{2}\!\!\!\int\limits_{\{\vert x\vert =R\}}\!\!\!(e^{-af}u^2-1)\, g\!\left(Y,\nu\right)\,d\sigma,\label{feq5}
\end{align}
where the last equality follows from $\mathrm{div}(Y)=0$.
Recalling that
\begin{align}
g_{ij}&=\delta_{ij}+O(\vert x\vert^{-\delta}),\label{feq26}\\
g^{ij}&=\delta^{ij}+O(\vert x\vert^{-\delta}),\label{feq27}\\
\nu^i&=\nu^i_{\text{eucl}}+O(\vert x\vert^{-\delta}),\label{feq28}\\
d\sigma&=\left[1+O(\vert x\vert^{-\delta})\right]\,d\sigma_{\text{eucl}},\label{feq29}
\end{align}
see for instance \cite[Section 1.4]{phdthesisOro}, and obtaining from $f=O(\vert x\vert^{-\delta})$ that $e^{-af}=1+O(\vert x\vert^{-\delta})$ we deduce that
\begin{align}
(e^{-af}u^2-1)&=O(\vert x\vert^{-\delta}),\\
\int\limits_{\{\vert x\vert =R\}}\!\!\!(e^{-af}u^2-1)\, g\!\left(Y,\nu\right)\,d\sigma&=\int\limits_{\{\vert x\vert =R\}}\!\!\!O(\vert x\vert^{-1-2\delta})\,d\sigma_{\text{eucl}}=O(R^{n-2-2\delta}),
\end{align}
and that
\begin{align}
g\!\left(\nabla u,\nu\right)&=\delta^i_{j}\partial_i u\,\nu^j_{\text{eucl}}+O(\vert x\vert^{-1-2\delta}),\\
\int\limits_{\{\vert x\vert =R\}}\!\!\!e^{-af}u\, g\!\left(\nabla u,\nu\right)\,d\sigma&=\int\limits_{\{\vert x\vert =R\}}\!\!\!\delta^i_{j}\partial_i u\,\nu^j_{\text{eucl}}\,d\sigma_{\text{eucl}}+O(R^{n-2-2\delta}).
\end{align}
Using the better asymptotic expansion of $u$, given by formula \eqref{betterexpu}, we then get
$$\int\limits_{\{\vert x\vert =R\}}\!\!\!\delta^i_{j}\partial_i u\,\nu^j_{\text{eucl}}\,d\sigma_{\text{eucl}}=-B(n-2)|\mathbb{S}^{n-1}|+O(R^{-\gamma}).$$
Thus, passing to the limit as $R\to +\infty$ and taking into account that $2\delta>n-2$, we conclude that
\begin{equation}
        B=-\frac{1}{(n-2)|\mathbb{S}^{n-1}|}
        \int\limits_M e^{-af}\left|\nabla u+\frac a2 uY\right|^2\,d\mu.
\end{equation}

We conclude this subsection with the following remark of rigidity.

\begin{remark}\label{RigB=0}
The following conditions are equivalent.
\begin{enumerate}
\item $B= 0$;
\item $Y\equiv0$;
\item $u\equiv1$.
\end{enumerate}
Indeed,  if $B=0$, formula \eqref{eq:B-formula} gives
\begin{equation}\label{eq:Bzero-firstorder}
        \nabla u+\frac a2uY=0
\end{equation}
everywhere.
Since $u>0$, it follows that $Y=-(2/a)\nabla\log u$. Taking the divergence and using the fact that $\mathrm{div}Y=0$, we obtain that $\Delta\log u=0$ on $M$. Moreover, since $u\to1$ implies that $\log u\to0$ at infinity, the maximum principle ensures that $\log u$ is constant, and this constant must be $0$, in particular, $u\equiv 1$ and $Y\equiv0$.\\
If $u\equiv 1$, asymptotic expansion \eqref{betterexpu} of $u$ implies that $B=0$, which in turn yields $Y\equiv 0$ by identity \eqref{eq:Bzero-firstorder}.\\
If $Y\equiv0$, then $V\equiv0$. Since $u$ solves $Lu=0$ on $M$ and $u\to1$ at infinity, the maximum principle guarantees that $u\equiv1$, which in turn gives $B=0$.
\end{remark}

\subsection{$X$--positive mass inequality}
By Theorem \ref{thm:invertibility} and Proposition \ref{prop:positivity}, it follows that the function $w=-(2/a)\log u$ is well defined and smooth. Indeed, $u=1+v$, with $v\in C^{2,\alpha}_{-\delta}(M)\cap C^{\infty}(M)$, and $u$ is strictly positive on $M$. Furthermore, by standard composition properties of weighted H\"{o}lder spaces, $w\in C^{2,\alpha}_{-\delta}(M)$. This fact along with $f\in C^{2,\alpha}_{-\delta}(M)\cap C^{\infty}(M)$ guarantee that $\widetilde X=\nabla(f+w)$ is an admissible smooth vector field. We also find that
\begin{equation}\label{feq3}
\mathrm{R}_{\widetilde{X}}^{(1-n)}=\mathrm{R}_{X}^{(1-n)},
\end{equation}
since $w$ solves 
\begin{equation}
        \Delta w-a\,g(\nabla f,\nabla w)-\frac a2|\nabla w|^2
        =-a\,g(\nabla f,Y)-\frac a2|Y|^2
\end{equation}
on $M$, as a direct consequence of the fact that $Lu=0$ on $M$. Finally, the equality \eqref{feq3}
and the assumption $\mathrm{R}+2\mathrm{div}X\in L^1(M,g)$ imply $\mathrm{R}+2\mathrm{div}\widetilde{X}\in L^1(M,g)$, as $\vert\widetilde X\vert^2=O(r^{-2-2\delta})$ and $\vert X\vert^2=O(r^{-2-2\delta})$ with $\delta>(n-2)/2$.

Let us observe that, by definition, we have
\begin{equation}\label{feq9}
\mathrm{R}_X^{(1-n)}
=
\mathrm{R}_X^{(k)}
+
\left(
\frac{1}{k}+\frac{1}{n-1}
\right)|X|^2,
\end{equation}
and the last term on the right--hand side is nonnegative as $1/k+1/(n-1)\geq 0$ for any $k\in\mathbb{R}\setminus(1-n,0]$. Thus, the assumption $\mathrm{R}_X^{(k)}\geq 0$ implies $ \mathrm{R}_X^{(1-n)}\geq 0$, which in turn gives $\mathrm{R}_{\widetilde{X}}^{(1-n)}\geq 0$. Consequently, considering the metric $$\widetilde{g}=e^{-\frac{2(f+w)}{n-1}}\,g,$$
by \cite[Lemma 2.4 and Lemma 2.5]{LaLoSa}, the Riemannian $n$--manifold $(M,\widetilde{g})$ is complete, asymptotically flat, and has 
\begin{equation}\label{feq8}
\widetilde{\mathrm{R}}=e^{\frac{2(f+w)}{n-1}}\,\mathrm{R}_{\widetilde{X}}^{(1-n)}\geq 0.
\end{equation}
Moreover, Law, Lopez and Santiago showed that 
$$m_{\mathrm{ADM}}(\widetilde{g})=\frac{1}{2(n-1)|\SSS^{n-1}|}\lim_{r\to +\infty}\int\limits_{\{\vert x \vert\,=\,r\}}\! \delta^{i}_{j}\left[\delta^{kl}\partial_{k}g_{il}-\delta^{kl}\partial_{i}g_{kl}+2\delta_{i}^{k}\partial_k (f+w)\right]\frac{x^{j}}{\vert x \vert}\,d\sigma_{\mathrm{eucl}}=m_{\widetilde{X}}.$$
Therefore, the positive mass theorem in all dimensions \cite{BrendleWang2026, KhuriWangWang} yields $ m_{\widetilde{X}}\geq 0$. At this point, let us use the improved asymptotic expansion of the function $u$, given by formula \eqref{betterexpu}, to establish that
$$\partial_i w=-\frac{2}{au}\partial_iu=-\frac{2}{a}\Big[1+O(\vert x\vert^{2-n})\Big]\Big[B(2-n)\frac{x^i}{\vert x\vert^n}+O(\vert x\vert^{1-n-\gamma})\Big]$$
and, consequently, that
$$\int\limits_{\{\vert x \vert\,=\,r\}}\! \delta^{i}_{j}\partial_i w\frac{x^{j}}{\vert x \vert}\,d\sigma_{\mathrm{eucl}}=-\frac{2}{a}\int\limits_{\{\vert x \vert\,=\,r\}}\! \left[B(2-n)\vert x\vert^{1-n}+o(\vert x\vert^{1-n})\right]\,d\sigma_{\mathrm{eucl}}=2a^{-1}B(n-2)\,|\mathbb{S}^{n-1}|+o(1).$$
Together with Remark \ref{rem:Xadmequivmf}, this result implies the following relation between the masses $m_{\widetilde{X}}$ and $m_{X}$: 
\begin{equation}
m_{\widetilde{X}}=m_{X}+2B,
\end{equation}
as $a=(n-2)/(n-1)$. Consequently, by equality \eqref{eq:B-formula}, we conclude
\begin{equation}\label{feq7}
m_X=m_{\widetilde{X}}-2B=m_{\widetilde{X}}+\frac{2}{(n-2)|\mathbb{S}^{n-1}|}
        \int\limits_M e^{-af}\left|\nabla u+\frac a2 uY\right|^2\,d\mu\geq 0.
\end{equation}

\subsection{$X$--positive mass rigidity} In this subsection, $(M,g)$ is a Riemannian $n$--manifold satisfying the assumptions of Theorem \ref{GenPMT} and having $m_X=0$. By the equality \eqref{feq7}, we deduce that $m_{\widetilde{X}}=0$ and $B=0$. From $B=0$, by Remark \ref{RigB=0} it follows that $u\equiv1$ and $Y\equiv0$. A consequence of $u\equiv 1$ is $w\equiv 0$. 
By using then that $Y\equiv0$, we also obtain $X=\nabla f$. Now, recalling that $m_{\mathrm{ADM}}(\widetilde{g})=m_{\widetilde{X}}=0$ and since $w\equiv 0$, the rigidity part of the classical positive mass theorem \cite{BrendleWang2026, KhuriWangWang} implies that $(M,\widetilde{g}=e^{-\frac{2f}{n-1}}\,g)$ is isometric to $(\R^n,g_{\text{eucl}})$, which concludes the proof of case $(2)$. 
Under the stronger assumption that $\mathrm{R}_X^{(k)} \geqslant 0$ with $k\in \mathbb{R}\setminus [1-n,0]$, the flatness of $(M,\widetilde{g})$ and equality \eqref{feq8} imply $\mathrm{R}_{\widetilde{X}}^{(1-n)}\equiv 0$, which by equality \eqref{feq3} yields $\mathrm{R}_{X}^{(1-n)}\equiv 0$. Then, equality \eqref{feq9} leads to $X\equiv 0$. Since $X=\nabla f$ and $X\equiv 0$, we have $\nabla f\equiv 0$. Thus, $f$ is constant and this constant must be $0$ as $f\to 0$ at infinity. Consequently, $\widetilde{g}=g$ on $M$ and $(M,g)$ is isometric to $(\R^n,g_{\text{eucl}})$. \\
If $X\equiv 0$ and $(M,g)$ is isometric to $(\R^n,g_{\text{eucl}})$, a direct computation yields $m_X(g)=m_{\mathrm{ADM}}(g)=0$. Similarly, if $X=\nabla f$ and $(M,e^{-2f/(n-1)}g)$ is isometric to $(\R^n,g_{\mathrm{eucl}})$, a direct computation shows that $m_X(g)=m_{\mathrm{ADM}}(e^{-2f/(n-1)}g)=0$.

\section{Sharpness of the admissible range of $k$ in Theorem \ref{GenPMT}}\label{Sectsharpinterval}

The exclusion of the interval $k\in(1-n,0)$ in the $X$--positive mass theorem is necessary. Indeed, in this section, we will show that, for any $n\geqslant 3$ and $1-n<k<0$, there exist a smooth, connected, complete, asymptotically flat Riemannian manifold $(\R^n,g)$ and a smooth compactly supported vector field $X$ such that
\begin{align}
\mathrm{R}+&2\mathrm{div} X\in L^1(\R^n,g),\\
\mathrm{R}_X^{(k)}=\mathrm{R}+2\mathrm{div}& X-\left(1+1/k\right)|X|^2\geqslant 0 \,\,\text{on $\R^n$},\\
&m_X(g)<0.
\end{align}
In particular, the positivity conclusion of the $X$--positive mass theorem
generally fails for every $k\in(1-n,0)$.

 We fix a nondecreasing function $m\in C^\infty([0,+\infty))$ such that
$$
0\leqslant m\leqslant 1,\qquad
m(r)=0 \quad \text{for } r\leqslant \frac14,
\qquad
m(r)=1 \quad \text{for } r\geqslant \frac12,
$$
and define
$$
\zeta(x)=\int\limits_{|x|}^{+\infty} \!m(s)s^{1-n}\,ds
$$
for every $x\in \R^{n}\setminus\{0\}$. We observe that, since $m\equiv0$ on the interval $[0, 1/4]$, the function $\zeta$ is constant in a neighborhood of the origin, hence, it extends to a smooth radial function on all of $\mathbb{R}^n$. Moreover, in polar coordinates, we have
$$
-\Delta_{\mathrm{eucl}}\zeta
=r^{1-n} m'(r),
$$
therefore, the function $\rho$, defined by $\rho=-\Delta_{\mathrm{eucl}}\zeta$, satisfies the following properties:
$$
\rho\in C_c^\infty(\R^n),\qquad \mathrm{supp}\rho\subset B_1(0),\qquad
\rho\geqslant0,\qquad
$$
In addition, $\zeta>0$ on $\mathbb{R}^n$ and $\zeta(x)=\vert x\vert^{2-n}/(n-2)$ for every $x\in \R^n:\vert x\vert \geqslant 1/2$, as $m\equiv1$ for $r\geqslant1/2$.
Since $\zeta$ is a positive smooth function that vanishes at infinity, there exists $\lambda_0>0$ such that the smooth function $u=1-\lambda_0\zeta$ is positive on $\R^n$, allowing us to introduce the conformally flat metric
\begin{equation}\label{feq31}
h=u^{\frac{4}{n-2}}g_{\text{eucl}}.
\end{equation}
Since $u$ is bounded above and bounded away from zero, $h$ is uniformly
equivalent to $g_{\text{eucl}}$. Consequently, the Euclidean distance and the $h$--distance are bi--Lipschitz, and $h$ is complete. 
Moreover, the fact that $\zeta(x)=\vert x\vert^{2-n}/(n-2)$ on $\{\vert x\vert \geqslant 1/2\}$ yields that
$u=1-\lambda_0\vert x\vert^{2-n}/(n-2)$ therein, which in turn implies that $h$ is asymptotically flat of order
$n-2$. Finally, we have
\begin{align}
m_{\mathrm{ADM}}(h)&=\frac{1}{2(n-1)|\SSS^{n-1}|}\lim_{r\to +\infty}\int\limits_{\{\vert x \vert\,=\,r\}}\! \sum_{i,j=1}^n(\partial_{i}h_{ji}-\partial_{j}h_{ii})\frac{x^{j}}{\vert x \vert}\,d\sigma_{\mathrm{eucl}}\\
&=\lim_{r\to +\infty} \Big(\!-\frac{2\lambda_0}{(n-2)}\, u^{\frac{6-n}{n-2}}\Big)=-\frac{2\lambda_0}{(n-2)}<0,
\end{align}
and
\begin{align}\label{feq30}
\mathrm{R}_h=-\frac{4(n-1)}{(n-2)}\,u^{-\frac{n+2}{n-2}}\Delta_{\text{eucl}}\,u=-\frac{4(n-1)\lambda_0}{(n-2)}\,u^{-\frac{n+2}{n-2}}\,\rho\leqslant 0 \,\,\,\text{on $\R^n$}.
\end{align}
The next step is to introduce a further conformal change in order to adjust the modified scalar curvature while leaving the asymptotic behavior unchanged. 
We then choose a smooth cutoff function satisfying
$$
0\leqslant \chi\leqslant 1\qquad \chi = 1\,\,\, \text{on $\{\vert x\vert <2\}$}\qquad \mathrm{supp}\chi\subset \{\vert x\vert <3\},
$$
and consider on $\R^n$ the smooth function $f$, given by $$f(x)=\lambda\chi(x)x^1.$$ Here, the constant $\lambda$ is positive and will be chosen suitably below.
The function $f$ is compactly supported. Accordingly, the smooth vector field $$X=\nabla^g \!f$$
is admissible, and the conformal metric $$g=e^{\frac{2f}{n-1}}h$$ is complete and coincides with the asymptotically flat metric $h$ outside a compact set.
Now, from $h=e^{-\frac{2f}{n-1}}g$, it follows that
\begin{align}
\mathrm{R}_h&=e^{\frac{2f}{n-1}}\left(\mathrm{R}_g+2\Delta_gf-\frac{n-2}{n-1}\,|\nabla^g\!f|_g^2\right)\\
&=e^{\frac{2f}{n-1}}\left[\mathrm{R}_g+2\Delta_gf-\left(1+\frac{1}{k}\right)|\nabla^gf|_g^2+\left(\frac{1}{k}+\frac{1}{n-1}\right)|\nabla^g\!f|_g^2\right].
\end{align}
This, together with equality \eqref{feq30}, yields
\begin{align}
\mathrm{R}_{g,X}^{(k)}&=\mathrm{R}_g+2\Delta_gf-\left(1+\frac{1}{k}\right)|\nabla^g\!f|_g^2=e^{-\frac{2f}{n-1}}\mathrm{R}_h-\left(\frac{1}{k}+\frac{1}{n-1}\right)|\nabla^g\!f|_g^2\\
&=e^{-\frac{2f}{n-1}}\bigg[ -\frac{4(n-1)\lambda_0}{(n-2)}\,u^{-\frac{n+2}{n-2}}\,\rho -\left(\frac{1}{k}+\frac{1}{n-1}\right)|\nabla^h\!f|_h^2\, \bigg]\\
&=u^{-\frac{n+2}{n-2}}\,e^{-\frac{2f}{n-1}}\Big[ -\frac{4(n-1)\lambda_0}{(n-2)}\,\rho -\left(\frac{1}{k}+\frac{1}{n-1}\right)u|\nabla^{\text{eucl}}f|_{\text{eucl}}^2\,\Big], \label{eq:scalar-curvature-h}
\end{align}
where, in the last equality, we used identity \eqref{feq31}.
Notice that the constant $(1/k)+(1/(n-1))$ is negative because $1-n<k<0$. Thus, outside $\mathrm{supp}\rho$, formula
\eqref{eq:scalar-curvature-h} gives $\mathrm{R}_{g,X}^{(k)}\geqslant 0$, as $u>0$ on $\R^n$. On $\mathrm{supp}\rho\subset B_1(0)$, since the cutoff function $\chi$ is
identically equal to $1$, we have $f=\lambda x^1$ and
$$\mathrm{R}_{g,X}^{(k)}=u^{-\frac{n+2}{n-2}}\,e^{-\frac{2f}{n-1}}\Big[ -\frac{4(n-1)\lambda_0}{(n-2)}\,\rho -\lambda^2\!\left(\frac{1}{k}+\frac{1}{n-1}\right)u\Big].$$
Thus, choosing $\lambda>0$ such that
\begin{equation}
\label{eq:lambda-choice}
\lambda \geqslant \sqrt{\frac{\frac{4(n-1)\lambda_0}{n-2}\,\max\limits_{\R^n}\rho}{ -\!\left(\frac{1}{k}+\frac{1}{n-1}\right)\,\min\limits_{\R^n}u}},
\end{equation}
we obtain $\mathrm{R}_{g,X}^{(k)}\geqslant 0$ also on  $\mathrm{supp}\rho$. We emphasize that $\lambda$ is well defined because $\rho\in C_c^\infty(\R^n)$ is nonnegative and not identically zero, while $u$ is smooth, positive, and approaches $1$ at infinity. In conclusion, $\mathrm{R}_{g,X}^{(k)}\geqslant 0$ on all of $\R^n$ for the vector field $X$ and the metric $g$ given above.\\
We verify the remaining assumption that $\mathrm{R}_g+2\mathrm{div}_gX\in L^{1}(\R^n,g)$. 
We observe that $f=0$ on the set $\{|x|> 3\}$; consequently, therein, $X=0$ and $g=h$. The latter equality implies that $\mathrm{R}_g=\mathrm{R}_h$, which is identically zero on this set, since it lies outside the support of $\rho$. We then conclude that $\mathrm{R}_g+2\mathrm{div}_gX$ is compactly supported and thus belongs to $L^1(\R^n,g)$.\\
Finally, since $X=0$ and $g=h$ on $\{|x|> 3\}$, it follows $$m_X(g)=m_{\mathrm{ADM}}(g)=m_{\mathrm{ADM}}(h)<0,$$ 
as established above.

\section{First consequences of the $X$--positive mass theorem}\label{Sectfirst consequences}

The $X$--positive mass theorem is formulated for a single vector field, but it can be extended to systems carrying several charges. Indeed, when the total charge vector is nonzero, its direction selects a canonical linear combination of the underlying fields whose scalar charge equals the Euclidean norm of the full charge vector. The $N$--charge positive mass inequality then follows from a single  application of the critical $X$--positive mass theorem, and the equality case forces all fields to align globally with the selected linear combination.

\begin{proof}[Proof of Theorem \ref{thm:Nchargepostivemass}]
Since 
\begin{equation}\label{eq:n-dimensional-N-charge-dec-expandedintrobis}
\mathrm{R}\geqslant (n-1)(n-2)\bigl(|\mathcal{E}_1|^2+\dots+| \mathcal{E}_N|^2\bigr)+2(n-1)\sqrt{(\mathrm{div} \mathcal{E}_1)^2+\dots+(\mathrm{div} \mathcal{E}_N)^2}\geqslant0
\end{equation} on $M$,
by the positive mass theorem \cite{BrendleWang2026, KhuriWangWang}, we may assume $\mathcal{Q}> 0$.
For every $b=(b^1,\ldots,b^N)\in\SSS^{N-1}$, we define
$$
 Z_b=b^i \mathcal{E}_i \qquad \text{and} \qquad  \rho_b=\mathrm{div}Z_b=b^i\rho_{\mathcal{E}_i}.
$$
Before proceeding, we remark that $Z_b$ is a smooth admissible vector field on $M$ such that
\begin{equation}\label{feq14}
 Z_b\in C^{1,\alpha}_{-1-\delta}(M;TM)\qquad \text{and}\qquad \mathrm{div}(Z_b)\in L^1(M,g).
 \end{equation}
First, we set $\underline{b}_1=b$. Then, we extend $\{\underline{b}\}$ to an orthonormal basis $(\underline{b}_1,\underline{b}_2,\dots, \underline{b}_N)$ of $\R^N$ and define $W_\alpha=b_{\alpha}^i\mathcal{E}_i$ for every $\alpha\in \{2,\dots, N\}$.
Since $\sum_{k=1}^Nb_i^k b_j^k=\delta_{ij}$ for every $i,j\in\{1,\dots, N\}$ implies $\sum_{k=1}^N b^i_kb^j_k=\delta^{ij}$ for all $i,j\in\{1,\dots, N\}$, we have
\begin{equation}\label{feq18}
 \mathcal{E}_i=\sum_{j,k=1}^N b^i_k b^j_k\mathcal{E}_j=\sum_{j=1}^N b^i_1 b^j_1\mathcal{E}_j+\sum_{j=1}^N\sum_{\alpha=2}^N b^i_\alpha b^j_\alpha\mathcal{E}_j=b^i_1Z_b+\sum_{\alpha=2}^Nb^i_\alpha W_{\alpha},
 \end{equation}
and also,
\begin{align}
 \sum_{i=1}^N|\mathcal{E}_i|^2&=\left(\sum_{i=1}^N (b_1^i)^2\right) \vert Z_b\vert^2+2\sum_{\alpha=2}^N\left(\sum_{i=1}^N b_1^ib_{\alpha}^i \right)g(Z_b, W_\alpha)+\sum_{\alpha,\beta=2}^N\left(\sum_{i=1}^Nb^i_\alpha b^i_\beta\right) g(W_\alpha, W_{\beta})\\
 &=|Z_b|^2+\sum_{\alpha=2}^N|W_\alpha|^2.\label{eq:orthogonal-energy}
\end{align}
Now, the Cauchy--Schwarz inequality and the $N$--charge dominant energy condition \eqref{eq:n-dimensional-N-charge-decintro} yield
$$
 |\rho_b|\,\leqslant\,\sqrt{\rho_{\mathcal{E}_1}^2+\dots+\rho_{\mathcal{E}_N}^2}\,\leqslant\,\mu.
$$
Combining identity \eqref{eq:generalized-matter-densityintro} with this last result and equality \eqref{eq:orthogonal-energy}, we obtain
\begin{equation}\label{feq15}
 \mathrm{R}-2(n-1)|\mathrm{div}Z_b|-(n-1)(n-2)|Z_b|^2
 =2(n-1)(\mu-|\rho_b|)
 +(n-1)(n-2)\sum_{\alpha=2}^N|W_\alpha|^2\geqslant0.
\end{equation}
We now claim that 
\begin{equation}\label{feq16}
m_{\mathrm{ADM}}(g)\geqslant|\mathcal{Q}_{Z_b}|=|b^i \mathcal{Q}_{\mathcal{E}_i}|.
\end{equation}
Indeed, notice that replacing $Z_b$ with $-Z_b$ changes the sign of $\mathcal{Q}_{Z_b}$ while leaving assumptions \eqref{feq14} and \eqref{feq15} unchanged; hence, we may assume without loss of generality that $\mathcal{Q}_{Z_b}\geqslant0$.
Moreover, the curvature condition \eqref{feq15} yields $\mathrm{R}_{-(n-1)Z_b}^{(1-n)}\geqslant 0$ and the identity 
$$m_{-(n-1)Z_b}(g)=m_{\mathrm{ADM}}(g)-\mathcal{Q}_{Z_b}$$
holds. Then, the claim follows directly from Theorem \ref{GenPMT} with $X=-(n-1)Z_b$, and it is true for every $b\in\SSS^{N-1}$. Thus, choosing $b^i=\mathcal{Q}_{\mathcal{E}_i}/\mathcal{Q}$ for each $i\in\{1,\dots,N\}$, we conclude 
\begin{equation}\label{feq17}
 m_{\mathrm{ADM}}(g)\geqslant \mathcal{Q}.
\end{equation}
Let us analyze the equality case. If $\mathcal{Q}=0$ and $m_{\mathrm{ADM}}(g)=0$, the classical positive mass theorem \cite{BrendleWang2026, KhuriWangWang} shows that $(M,g)$ is isometric to $(\R^n, g_{\mathrm{eucl}})$, which in turn implies that every $\mathcal{E}_i$ vanishes on $M$ by inequality \eqref{eq:n-dimensional-N-charge-dec-expandedintrobis}. Trivially, if $\mathcal{Q}=0$, $\mathcal{E}_i\equiv0$ for every $i\in\{1,\dots,N\}$, and $(M,g)$ is isometric to $(\R^n, g_{\mathrm{eucl}})$, equality holds in \eqref{feq17}. The rigidity statement $(3)$ follows from rigidity statement $(1)$ by virtue of the observation that $\mathrm{div}\mathcal E_i\equiv0$ yields $\mathcal Q_{\mathcal E_i}=0$, since each $\mathcal{E}_i$ is a smooth vector field on the manifold $M$, which has no boundary.
We now show the rigidity statement $(2)$. Let $\mathcal Q>0$. We suppose that equality holds in \eqref{feq17}. This implies that equality holds in the inequality $m_X\geqslant 0$ of Theorem \ref{GenPMT} with $X=-(n-1)Z_b$, where $b$ is given by 
$$b=\left(\frac{\mathcal Q_{\mathcal E_1}}{\mathcal Q},\dots, \frac{\mathcal Q_{\mathcal E_N}}{\mathcal Q}\right).$$
Therefore, applying the rigidity part of the theorem yields \eqref{gradformofX} and \eqref{eq:critical-rigidity-conformal-flat-nintro}, in particular, $$\mathrm{R}-2(n-1)\mathrm{div}Z_b-(n-1)(n-2)|Z_b|^2\equiv 0.$$
On the other hand, the inequality $\rho_b\,\leqslant\, |\rho_b|\leqslant\,\mu$ and identity \eqref{eq:orthogonal-energy} yield
$$\mathrm{R}-2(n-1)\mathrm{div}Z_b-(n-1)(n-2)|Z_b|^2
 =2(n-1)(\mu-\rho_b)
 +(n-1)(n-2)\sum_{\alpha=2}^N|W_\alpha|^2\geqslant0.$$
Then, we conclude $\mu\equiv \rho_b$ and $W_{\alpha}\equiv0$. 
By equality \eqref{feq18}, the latter implies
$$\mathcal E_i=\frac{\mathcal Q_{\mathcal E_i}}{\mathcal Q} Z_b=\,\frac{\mathcal Q_{\mathcal E_i}}{\mathcal Q}\left(\frac{\mathcal{Q}_{\mathcal{E}_1}}{\mathcal{Q}}\mathcal{E}_1+\dots+\frac{\mathcal{Q}_{\mathcal{E}_N}}{\mathcal{Q}}\mathcal{E}_N  \right).$$
Thus, we obtain
$$\rho_{\mathcal E_i}=\frac{\mathcal Q_{\mathcal E_i}}{\mathcal Q}\rho_b=\frac{\mathcal Q_{\mathcal E_i}}{\mathcal Q}\mu.$$
Conditions \eqref{gradformofX} and \eqref{eq:critical-rigidity-conformal-flat-nintro} state that $X=\nabla f$ and that the conformal Riemannian manifold
$
\left(M,e^{-2f/(n-1)}g\right)
$
is isometric to the Euclidean space. Therefore, the rigidity statement $(2)$ of Theorem \ref{GenPMT} yields $m_X(g)=0$. Since $m_X(g)=m_{\mathrm{ADM}}(g)-\mathcal{Q}$
we conclude that $m_{\mathrm{ADM}}(g)=\mathcal{Q}$.
\end{proof}

The preceding result has a natural interpretation in the three--dimensional Einstein--Maxwell setting. Indeed, on an oriented Riemannian three--manifold, both the electric and magnetic fields can be represented by vector fields, so that the $N$--field framework applies to the pair $(\mathcal E,\mathcal B)$. In this setting, the charged dominant energy condition implies the two--charge energy condition required by the $N$--charge positive mass theorem. We therefore obtain the following charged positive mass theorem.

\begin{proof}[Proof of Theorem \ref{crl:time-symmetric-dyonic-pmt}]
The theorem follows directly from Theorem \ref{thm:Nchargepostivemass} with $\mathcal{E}_1=\mathcal{E}$ and  $\mathcal{E}_2=\mathcal{B}$, since 
$$\mu\geqslant \sqrt{\rho_\mathcal{E}^2+\rho_\mathcal{B}^2+|J_{\mathrm{em}}|^2}\geqslant  \sqrt{\rho_\mathcal{E}^2+\rho_\mathcal{B}^2}.$$
Moreover, whenever equality holds in $(2)$, identities \eqref{linkEeZb} and \eqref{linkBeZb} yield $J_{\mathrm{em}}=0$.
\end{proof}

\section{Higher--dimensional magnetic asymptotics and tensorial mass–charge inequalities}\label{SectHighmahetic}

We start this section by recalling the part of the Einstein--Maxwell initial--data formalism that
motivates the Maxwell--form theorems. Let $(\mathbf{\mathcal M}^{n+1},\mathbf g)$ be an oriented and time--oriented Lorentzian manifold of signature $(-,+,\ldots,+)$.
We say that $(\mathbf{\mathcal M}^{n+1},\mathbf g)$ is an {\em Einstein--Maxwell spacetime} if it satisfies 
$$\mathrm{\mathbf{Ric}}-\frac{1}{2} \mathrm{\mathbf{R}}\mathbf g=(n-1)(n-2) \mathbf T_F+(n-1)\vert \SSS^{n-1}\vert\widehat{\mathbf T}.$$
Here, $\mathbf T_F$ is the electromagnetic energy--momentum tensor coupled with a Maxwell two--form $F\in\Omega^2(\mathcal M)$ satisfying $dF=0$ and $d\star_{\mathbf g}F=0$, which is given by
$$\mathbf T_F(U,V)= \mathbf g(\iota_UF,\iota_VF)-\frac12\|F\|_{\mathbf g}^2\,\mathbf g(U,V)$$
for all smooth vector fields $U,V$ on $\mathbf{\mathcal M}$, while $\widehat{\mathbf T}$ denotes the stress--energy tensor of the non-electromagnetic matter.
Notice that, when $n=3$, the Einstein--Maxwell equation becomes $$\mathrm{\mathbf{Ric}}-(1/2)\mathrm{\mathbf{R}}\mathbf g=2 \mathbf T_F+8\pi\widehat{\mathbf T}.$$
We now consider an oriented spacelike hypersurface $M^n\subset\mathcal M$ with induced metric $g$ and future--directed unit normal $\mathbf N$, and define along $M$
$$e=\iota_M^*\big(  \iota_{\mathbf N}F\big) \quad\quad \text{and}\quad\quad \beta=\iota_M^*(F).$$
We then set $ \mathcal E=e^\sharp\in \Gamma(TM)$ and $ b=\star\beta\in\Omega^{n-2}(M)$.
For every $X,Y\in\Gamma(TM)$, we have
\begin{align}
\bigl(-\mathbf N^\flat\wedge e+\beta\bigr)(X,Y)&=\beta(X,Y)=F(X,Y),\\
\bigl(-\mathbf N^\flat\wedge e+\beta\bigr)(\mathbf N,X)&=-\left(\mathbf N^\flat(\mathbf N)e(X)-N^\flat(X)e(\mathbf N)\right)=e(X)=F(\mathbf N,X),
\end{align}
as $\mathbf N^\flat(\mathbf N)=-1$ and $\mathbf N^\flat(X)=0$. It then follows that
$$
  F=-\mathbf N^\flat\wedge e+\beta.
$$
Moreover, a direct computation gives
$$
  \mathbf T_F(\mathbf N,\mathbf N)
  =
  \frac12
  \bigl(
    |\mathcal E|_g^2+\|b\|_g^2
  \bigr).
$$
With the initial--data convention that the electromagnetic momentum density one--form $ J_{\mathrm{em}}$ is $\iota_M^*\big(\iota_{\mathbf N}\mathbf T_F\big)$, one obtains
$$
  J_{\mathrm{em}} =
  -\iota_{\mathcal E}\beta
  =
  (-1)^{n+1}\star_g(e\wedge b),
$$
by Proposition \ref{prop:hodge-contraction-identity}.
In particular, $|J_{\mathrm{em}}|^2=\|e\wedge b\|^2$ and $b\equiv 0$ implies $J_{\mathrm{em}}\equiv0$. 
In order to introduce the corresponding constraint equations and their time--symmetric version, let $k$ be the second fundamental form of $M$, and define the non--electromagnetic matter energy density and momentum density respectively by
$$
  \widehat\mu
  =
  \widehat{\mathbf T}(\mathbf N,\mathbf N),
  \qquad
  \widehat J
  =\iota_M^*\big(\iota_{\mathbf N}\widehat{\mathbf T}\big).
$$
Then, the Gauss--Codazzi equations yield
\begin{align}
  2(n-1)\vert \SSS^{n-1}\vert\widehat\mu
  &=
  \mathrm{R}
  +(\mathrm{tr}k)^2
  -|k|^2
  -(n-1)(n-2)
   \bigl(
     |\mathcal E|^2+\|b\|_g^2
   \bigr),\\
  (n-1)\vert \SSS^{n-1}\vert\widehat J
  &=
  \mathrm{div}
  \bigl(
    k-(\mathrm{tr}k)g
  \bigr)
  -(n-1)(n-2)J_{\mathrm{em}}.
\end{align}
Consequently, in the time--symmetric case $k=0$, the dominant energy condition $\widehat\mu\geqslant \vert \widehat J\vert$ becomes $$\mu\geqslant (n-2)\vert J_{\mathrm{em}}\vert,$$ where we set $\mu=\vert \SSS^{n-1}\vert\widehat\mu$.
On the other hand, the equations
$d\star_{\mathbf g}F=0$ and $dF=0$ imply respectively
$$
  \mathrm{div}\mathcal E=0
  \qquad \text{and}\qquad
  d\beta=0.
$$
The latter is equivalent to $\delta b=0$.

Theorem \ref{thm:canonical-Maxwell-PMTintro} is deliberately formulated in a more general time--symmetric setting: we impose $k=0$ but we do not impose the source--free constraints. Thus, $\mathrm{div}\mathcal E$ may be nonzero, and the two--form $\beta$ is not assumed to be closed. Furthermore, motivated by the charged dominant energy condition in the purely electric setting \cite{Rau25}, we incorporate the electric source density by imposing the strengthened condition
$$\mu\geqslant\sqrt{\rho_{\mathcal E}^2+(n-2)^2\,|J_{\mathrm{em}}|^2}.$$

\begin{proof}[Proof of Theorem \ref{thm:canonical-Maxwell-PMTintro}]
We start by observing that inequality \eqref{feq19} implies the $1$--charge dominant energy condition
$$ \mathrm{R}-2(n-1)|\mathrm{div} \mathcal{E}|-(n-1)(n-2)|\mathcal{E}|^2\geqslant 0\quad\text{on }M.$$
Therefore, inequality \eqref{feq20} directly follows from Theorem \ref{thm:Nchargepostivemass} with $N=1$ and $\mathcal{E}_1=\mathcal{E}$.\\
If $\mathcal{Q}_{\mathcal E}=0$ and $m_{\mathrm{ADM}}(g)=0$, the same argument used in the proof of the rigidity statement $(1)$ in Theorem \ref{thm:Nchargepostivemass} shows that $(M,g)$ is isometric to $(\R^n, g_{\mathrm{eucl}})$ and $\mathcal{E}$ vanishes on $M$. Then, combining inequality \eqref{feq19} written as
\begin{equation}\label{feq24}
\mathrm{R}\geqslant (n-1)(n-2)
\bigl(|\mathcal E|^2+\|b\|^2\bigr) +2(n-1) \sqrt{\rho_{\mathcal E}^2+(n-2)^2\,|J_{\mathrm{em}}|^2}
  \quad\text{on }M,
  \end{equation}
with $\mathrm{R}\equiv0$, we conclude $b\equiv0$.\\
Trivially, if $\mathcal{Q}_{\mathcal E}=0$, $\mathcal{E}\equiv0$, $b\equiv 0$ and $(M,g)$ is isometric to $(\R^n, g_{\mathrm{eucl}})$, equality holds in \eqref{feq20}.\\ Now, we assume that $\mathcal{Q}_{\mathcal E}\neq 0$ and $m_{\mathrm{ADM}}(g)=|\mathcal{Q}_{\mathcal E}|$. The rigidity statement $(2)$ of Theorem \ref{thm:Nchargepostivemass} yields the claims \eqref{feq21} and \eqref{feq22}, and it also implies 
\begin{equation}\label{feq25}
\mathrm{R}-(n-1)(n-2)|\mathcal{E}|^2=\Big\vert\,\mathrm{R}-(n-1)(n-2)|\mathcal{E}|^2\,\Big\vert=2(n-1)|\mathrm{div} \mathcal{E}|\quad\text{on }M,
\end{equation}
where the first equality is a consequence of inequality \eqref{feq24}. Thus,
$$\mathrm{R}-2(n-1)|\mathrm{div} \mathcal{E}|-(n-1)(n-2)|\mathcal{E}|^2=0\quad\text{on }M,$$
and combining it with inequality \eqref{feq24}, it turns out that $b\equiv 0$. The latter, together with equality \eqref{feq25}, in turn implies that $\mu=|\rho_{\mathcal{E}}|$ on $M$.\\
If there exists a smooth function $f\in C^{2,\alpha}_{-\delta}(M)$ such that \eqref{feq21} and \eqref{feq22} hold, the rigidity statement $(2)$ of Theorem \ref{thm:Nchargepostivemass} implies $m_{\mathrm{ADM}}(g)=|\mathcal{Q}_{\mathcal E}|$.
\end{proof}

The above theorem shows that the magnetic $(n-2)$--form contributes as a nonnegative term to the energy condition, while the resulting mass bound involves only the electric scalar charge. We show that, under suitable integrability assumptions, this is not merely a limitation of the charge--selection argument.

\begin{proof}[Proof of Proposition \ref{prop:tensorial-magnetic-chargeintro}]
We fix an asymptotically flat chart $(U_{\infty}, (x^1,\dots,x^n))$ of order $\tau>(n-2)/2$. 
For any $\omega=\sum_{1\leqslant i_1<\dots<i_{n-3}\leqslant n}a_{i_1\dots i_{n-3}}\,dx^{i_1}\wedge \dots \wedge dx^{i_{n-3}}$ with constant component functions, we have $d\omega=0$, therefore, if we denote by $A_{r,R}$ the annular region $\{r<\vert x\vert< R\}$, we obtain by Stokes' theorem that
\begin{equation}
   \int\limits_{\{\vert x\vert=R\}}\!\!\!\iota_{S_R}^*(\omega\wedge\beta)-\!\!\!\int\limits_{\{\vert x\vert=r\}}\!\!\!\iota_{S_r}^*(\omega\wedge \beta)
   =\int\limits_{A_{r,R}}d(\omega \wedge \beta)
   =(-1)^{n-3}\int\limits_{A_{r,R}}\omega\wedge d\beta,\quad\quad
   \label{eq:magnetic-flux-difference}
\end{equation}
where we are considering on every coordinate hypersphere $\{\vert x\vert=r_0\}$ the induced orientation by $M\setminus\{\vert x\vert\geqslant r_0\}$. Hence, 
\begin{equation}
   \bigg|\, \int\limits_{\{\vert x\vert=R\}}\!\!\!\iota_{S_R}^*(\omega\wedge\beta)-\!\!\!\int\limits_{\{\vert x\vert=r\}}\!\!\!\iota_{S_r}^*(\omega\wedge \beta) \,\bigg|
   \leqslant \binom{n}{3}\,|\omega|_{\mathrm{eucl}}
      \int\limits_{A_{r,R}}|d\beta|_{\mathrm{eucl}}\,d\mu_{\mathrm{eucl}}
      \leqslant C\,|\omega|_{\mathrm{eucl}}
      \int\limits_{A_{r,R}}|d\beta|\,d\mu,
   \label{eq:magnetic-flux-Cauchy}
\end{equation}
for some positive constant $C$ independent of $r$ and $R$, whose existence is guaranteed by the fact that $g_{ij}-\delta_{ij}=O(\vert x \vert ^{-\tau})$ for every $i,j\in \{1,\dots,n\}$.
Now, assumption \eqref{eq:beta-coulomb-decay and dbeta-L1} shows both that the function 
$$r\in [r_0,+\infty)\to\!\!\! \int\limits_{\{\vert x\vert=r\}}\!\!\!\iota_{S_r}^*(\omega\wedge\beta)\in \R$$
is bounded and that the right--hand side of \eqref{eq:magnetic-flux-Cauchy} tends to zero
as $r,\,R\to+\infty$ with $r<R$. Therefore, the limit 
$$\lim\limits_{r\to +\infty}\int\limits_{\{\vert x\vert=r\}}\!\!\!\iota_{S_r}^*(\omega\wedge\beta)$$
exists, is finite and is linear in $\omega$. In particular, it is well defined
$$\frac{1}{|\SSS^{n-1}|}\sum_{1\leqslant i_1<\dots<i_{n-3}\leqslant n}\!\!\!\bigg(\lim\limits_{r\to +\infty}\int\limits_{\{\vert x\vert=r\}}\!\!\!\iota_{S_r}^*\big(dx^{i_1}\wedge \dots \wedge dx^{i_{n-3}}\wedge\beta\big)\bigg)     \,e^{i_1}\wedge \dots \wedge e^{i_{n-3}} \in \Lambda^{n-3}(\mathbb R^n)^*.$$
We now claim that it is equal to $\mathbb Q_b$, given by formula \eqref{eq:directional-magnetic-charge}. Indeed, $\star b=(-1)^{2(n-2)}\beta=\beta$ and, for every $I=(i_1, \dots,i_{n-3})$ with $1\leqslant i_1<\dots<i_{n-3}\leqslant n$, we apply \eqref{eq:hypersurface-hodge-contraction-coordinates} to $\eta=b$, thereby obtaining
\begin{align}
 \iota_{S_r}^*\big(dx^{i_1}\wedge \dots \wedge dx^{i_{n-3}}\wedge\beta\big)&=\iota_{S_r}^*\big(dx^{i_1}\wedge \dots \wedge dx^{i_{n-3}}\wedge\star b\big)\\
 &=(-1)^{n-3}g^{i_1j_1}\cdots g^{i_kj_k} b_{j_1\ldots j_k\ell}\nu^\ell d\sigma\\
 &=(-1)^{n-1}g^{i_1j_1}\cdots g^{i_kj_k} b_{j_1\ldots j_k\ell}\nu^\ell d\sigma.
\end{align}
We then use the asymptotic expansions \eqref{feq26}, \eqref{feq27}, \eqref{feq28}, and \eqref{feq29} by replacing $\delta$ with $\tau$.
This concludes the proof of the claim.\\
We next prove the stronger conclusion \eqref{eq:QB-L1-vanishingintro}. We suppose by contradiction that there exists $I=(i_1, \dots,i_{n-3})$ with $1\leqslant i_1<\dots<i_{n-3}\leqslant n$ such that $(\mathbb Q_b)_{i_1\dots i_{n-3}}\neq 0$. We observe that
\begin{align*} 
\iota_{S_r}^*\big(dx^{i_1}\wedge \dots \wedge dx^{i_{n-3}}\wedge\beta\big)&=\iota_{S_r}^*\Big[d\Big(x^{i_1}dx^{i_{2}}\wedge \dots \wedge dx^{i_{n-3}}\wedge\beta\Big)-(-1)^{n-4}x^{i_1}dx^{i_{2}}\wedge \dots \wedge dx^{i_{n-3}}\wedge d\beta\Big]\\
&=d\Big(\iota_{S_r}^*\big(x^{i_1}dx^{i_{2}}\wedge \dots \wedge dx^{i_{n-3}}\wedge\beta\big)\Big)+(-1)^{n-1}x^{i_1} \iota_{S_r}^*\big(dx^{i_{2}}\wedge \dots \wedge dx^{i_{n-3}}\wedge d\beta\big)
\end{align*}
which by Stokes' theorem implies that
\begin{equation}\label{eq:flux-source-slice-bound}
\bigg\vert\int\limits_{\{\vert x\vert=r\}}\!\!\!\iota_{S_r}^*\big(dx^{i_1}\wedge \dots \wedge dx^{i_{n-3}}\wedge\beta\big)\bigg\vert  \leqslant  \binom{n-1}{3}r\!\!\!\int\limits_{\{\vert x\vert=r\}}\!\!\! |d\beta|_{\mathrm{eucl}}\,d\sigma_{\mathrm{eucl}}=cr\!\!\!\int\limits_{\{\vert x\vert=r\}}\!\!\! |d\beta|_{\mathrm{eucl}}\,d\sigma_{\mathrm{eucl}}
\end{equation}
Therefore, for every sufficiently large $r$, there holds
\begin{equation}
\int\limits_{\{\vert x\vert=r\}}\!\!\! |d\beta|_{\mathrm{eucl}}\,d\sigma_{\mathrm{eucl}}\, \geqslant |\SSS^{n-1}|\,| (\mathbb Q_b)_{i_1\dots i_{n-3}}|/ \left(2cr\right).
\end{equation}
Given that $g_{ij}-\delta_{ij}=O(\vert x \vert ^{-\tau})$ for every $i,j\in \{1,\dots,n\}$ implies that $d\mu=\left[1+O(\vert x\vert^{-\tau})\right]\,d\mu_{\text{eucl}}$, the coarea formula then yields
\begin{equation}
   \int_{\{r\geqslant r_0\}}|d\beta|\,d\mu
   \geqslant C\int_{r_0}^{+\infty}\frac{dr}{r}=+\infty,
\end{equation}
for every sufficiently large $r_0$ and some $C>0$. This is impossible by \eqref{eq:beta-coulomb-decay and dbeta-L1}, and thus \eqref{eq:QB-L1-vanishingintro} holds.
\end{proof}

By Proposition \ref{prop:tensorial-magnetic-chargeintro}, we then associate a tensor--valued charge to a magnetic $(n-2)$--form, provided that it satisfies a prescribed critical asymptotic expansion. This charge is well defined in a suitable sense, and a mass–charge inequality involving its Euclidean norm is established in the following theorem.

\begin{proof}[Proof of Theorem \ref{thm:selected-tensorial-mass-charge}]
We now prove statement $(a)$.
Let $I=(i_1, \dots,i_{n-3})$ with $1~\leqslant~ i_1~<\dots<~i_{n-3}~\leqslant~ n$. We have
$$b_{Ij}\nu_{\mathrm{eucl}}^j=\frac{n}{3}r^{1-n}(dr\wedge\mathbb Q_b)_{Ij}\nu^j_{\mathrm{eucl}}+o(r^{1-n}),$$
and 
$$(dr\wedge\mathbb Q_b)_{Ij}=
\sum_{k=1}^{n-3}
(-1)^{k-1}\nu^{i_k}_{\mathrm{eucl}}
(\mathbb Q_b)_{i_1\cdots\widehat{i_k}\cdots i_{n-3}j}
+
(-1)^{n-3}\nu^{j}_{\mathrm{eucl}}
(\mathbb Q_b)_I,
$$
where $\nu^{i}_{\mathrm{eucl}}=x^i/|x|$ for every $i\in \{1,\dots,n\}$ and the hat denotes omission of the corresponding index. Using the equality
$$
\frac{1}{r^{n-1}|S^{n-1}|}
\int\limits_{\{\vert x \vert\,=\,r\}}\!\!\!\nu^i_{\mathrm{eucl}}\,\nu^j_{\mathrm{eucl}}\,
d\sigma_{\mathrm{eucl}}
=
\frac{\delta_{ij}}{n},
$$
the fact that all the component functions $(\mathbb Q_b)_{j_1\dots j_{n-3}}$ are constant and 
\begin{align}
(\mathbb Q_b)_{i_1\cdots\widehat{i_k}\cdots i_{n-3}i_k}=(-1)^{n-3-k}(\mathbb Q_b)_I,
\end{align}
we obtain as follows:
\begin{align*}
&\frac{(-1)^{n-1}}{|\SSS^{n-1}|} \lim\limits_{r\to +\infty}\int\limits_{\{\vert x \vert\,=\,r\}}\!\!\! b_{Ij}\nu^j_{\mathrm{eucl}}\,d\sigma_{\mathrm{eucl}}\\
&=
\frac{(-1)^{n-1}}{|\SSS^{n-1}|} \lim\limits_{r\to +\infty}\bigg(\,\frac{n}{3}\,r^{1-n}\!\!\!\!\!\int\limits_{\{\vert x \vert\,=\,r\}}\!\!\! (dr\wedge\mathbb Q_b)_{Ij}\nu^j_{\mathrm{eucl}}\,
d\sigma_{\mathrm{eucl}}+o(1)\bigg)\\
&=\lim\limits_{r\to +\infty}\bigg[\frac{(-1)^{n-1}\,n}{3|\SSS^{n-1}|}\,r^{1-n} \!\!\!\!\!\int\limits_{\{\vert x \vert\,=\,r\}}\!\!\!\!\bigg(\sum_{k=1}^{n-3}
(-1)^{k-1}\nu^{i_k}_{\mathrm{eucl}}\nu^j_{\mathrm{eucl}}
(\mathbb Q_b)_{i_1\cdots\widehat{i_k}\cdots i_{n-3}j}
+
(-1)^{n-3}\nu^{j}_{\mathrm{eucl}}\nu^j_{\mathrm{eucl}}(\mathbb Q_b)_I\bigg)\,d\sigma_{\mathrm{eucl}}\bigg]\\
&=\lim\limits_{r\to +\infty}\bigg[\frac{(-1)^{n-1}\,n}{3|\SSS^{n-1}|}\,r^{1-n} \bigg(\sum_{k=1}^{n-3}\frac{ (-1)^{k-1}\,|\SSS^{n-1}|}{n}\, r^{n-1}(\mathbb Q_b)_{i_1\cdots\widehat{i_k}\cdots i_{n-3}i_{_k}}
\!+(-1)^{n-3}r^{n-1}\,|\SSS^{n-1}|(\mathbb Q_b)_I \bigg)\,\bigg]\\
&=\frac{(-1)^{n-1}\,n}{3}\, \bigg[\frac{(-1)^n(n-3)}{n}+(-1)^{n-3}\bigg](\mathbb Q_b)_I
=(\mathbb Q_b)_I.
\end{align*}
We next prove statement $(b)$. Let $\varphi'=(\mathsf{x}^1,\dots, \mathsf{x}^n))$ be another positively oriented asymptotically flat chart of order
$\tau'>(n-2)/2$, and set $\mathsf{r}=\vert \mathsf{x}\vert$. By \cite[Theorem 9.3 and Theorem 9.5]{LeeParker}, the transition maps between these two
charts have the form 
\begin{align}
&\mathsf{x}(x)=\varphi'\circ\varphi^{-1}(x)=Ax+c+O_2\big(r^{1-\min\{\tau,\tau'\}}\big)\\
&x(\mathsf{x})=\varphi\circ (\varphi')^{-1}(\mathsf{x})=A'\mathsf{x}+c'+O_2\big(\mathsf{r}^{1-\min\{\tau,\tau'\}}\big)
\end{align}
with $A,\,A'\in SO(n)$ and for some $c,\,c'\in\mathbb R^n$. Since $\min\{\tau,\tau'\}>(n-2)/2\geqslant 1$, it then follows 
$$\mathsf{r}(x)=|\mathsf{x}(x)|= r+O_1(1)\quad \quad\text{and}\quad \quad r(\mathsf{x})=|x(\mathsf{x})|=\mathsf{r}+O_1(1).$$
Since the transition maps $\varphi'\circ\varphi^{-1}$ and $\varphi\circ (\varphi')^{-1}$ are inverses of one another, we find that $AA'=I$ and $Ac'+c=0$. Moreover, for every $i\in\{1,\dots,n\}$, we have
$$(\varphi'\circ\varphi^{-1})_{*} (dx^i)
=\frac{\partial (\varphi^i\circ (\varphi')^{-1})}{\partial \mathsf{x}^{j}}\,d\mathsf{x}^{j}=\Big((A^{-1})^i_{j}+ O_1\big(\mathsf{r}^{-\min\{\tau,\tau'\}}\big)  \Big)\,d\mathsf{x}^{j}=A_{i}^j\,d\mathsf{x}^{j}+O_1\big(\mathsf{r}^{-\min\{\tau,\tau'\}}\big)
$$
where we used the orthogonality of $A$ and the convention that $A^{i}_j$ denotes the element in the $i$--th row and the $j$--th column of $A$. Therefore, for every $I=(i_1, \dots,i_{n-3})$ with $1\leqslant i_1<\dots<~i_{n-3}\leqslant ~n$, we obtain
\begin{align}
(\varphi'\circ\varphi^{-1})_{*} (dx^{i_1}\wedge \dots \wedge dx^{i_{n-3}})&=A_{i_1}^{j_1} \dots A_{i_{n-3}}^{j_{n-3}}\,d\mathsf{x}^{j_1} \wedge \dots \wedge  d\mathsf{x}^{j_{n-3}}+O_1\big(\mathsf{r}^{-\min\{\tau,\tau'\}}\big)\\
&=\!\!\!\!\!\sum_{\substack{ J=(j_1, \dots,j_{n-3})\\1\leqslant j_1<\dots<j_{n-3}\leqslant n}}\!\!\!\!\det(A_{I}^J)\,d\mathsf{x}^{j_1} \wedge \dots \wedge  d\mathsf{x}^{j_{n-3}}+O_1\big(\mathsf{r}^{-\min\{\tau,\tau'\}}\big).
\end{align}
Recall that by our convention, $A_{I}^{J}$ denotes the $(n-3)\times (n-3)$ submatrix of $A$ with rows indexed by $J$ and columns indexed by $I$. 
In particular, there holds 
\begin{align*}
(\varphi'\circ\varphi^{-1})_{*} (\mathbb{Q}_b)&=\!\!\!\!\sum_{\substack{I=(i_1, \dots,i_{n-3})\\1\leqslant i_1<\dots<i_{n-3}\leqslant n}}\,\,\sum_{\substack{J=(j_1, \dots,j_{n-3})\\1\leqslant j_1<\dots<j_{n-3}\leqslant n}}\!\!\!\! (\mathbb{Q}_b)_{I}\det(A_{I}^J)\,d\mathsf{x}^{j_1} \wedge \dots \wedge  d\mathsf{x}^{j_{n-3}}+O_1\big(\mathsf{r}^{-\min\{\tau,\tau'\}}\big)\\
&=\!\!\!\!\sum_{\substack{J=(j_1, \dots,j_{n-3})\\1\leqslant j_1<\dots<j_{n-3}\leqslant n}}\!\Bigg(\sum_{\substack{I=(i_1, \dots,i_{n-3})\\1\leqslant i_1<\dots<i_{n-3}\leqslant n}} \!\!\!\! (\mathbb{Q}_b)_{I}\det(A_{I}^J) \Bigg)\,d\mathsf{x}^{j_1} \wedge \dots \wedge  d\mathsf{x}^{j_{n-3}}+O_1\big(\mathsf{r}^{-\min\{\tau,\tau'\}}\big)\\
&=\mathbb{Q}'_b+O_1\big(\mathsf{r}^{-\min\{\tau,\tau'\}}).
\end{align*}
Combining the previous expansions and using that $A$ is orthogonal yield
$$(\varphi'\circ\varphi^{-1})_{*} (r^{1-n}dr\wedge\mathbb{Q}_b)=\mathsf{r}^{1-n} d\mathsf{r}\wedge \mathbb{Q}'_b+O_1\big(\mathsf{r}^{-n}),$$
as $\min\{\tau,\tau'\}>(n-2)/2\geqslant 1$. Therefore, by $ (\varphi'\circ\varphi^{-1})_{*}\big(o_1(r^{1-n})\big)=o_1(\mathsf{r}^{1-n})$, we obtain the claimed transformation law. Finally, by the Cauchy--Binet formula, we have
\begin{align*}
\|\mathbb Q'_b\|^2_{\mathrm{eucl}}&=\!\!\!\!\sum_{\substack{J=(j_1, \dots,j_{n-3})\\1\leqslant j_1<\dots<j_{n-3}\leqslant n}}\!\!\Bigg(\sum_{\substack{I=(i_1, \dots,i_{n-3})\\1\leqslant i_1<\dots<i_{n-3}\leqslant n}} \!\!\!\! (\mathbb{Q}_b)_{I}\det(A_{I}^J)\Bigg)^2\\
&=\!\!\!\!\sum_{\substack{J=(j_1, \dots,j_{n-3})\\1\leqslant j_1<\dots<j_{n-3}\leqslant n}}\!\!\Bigg(\sum_{\substack{I=(i_1, \dots,i_{n-3})\\1\leqslant i_1<\dots<i_{n-3}\leqslant n}} \!\!\!\! (\mathbb{Q}_b)_{I}\det(A_{I}^J)\Bigg)\!\!\Bigg(\sum_{\substack{K=(\kappa_1, \dots,\kappa_{n-3})\\1\leqslant \kappa_1<\dots<\kappa_{n-3}\leqslant n}} \!\!\!\! (\mathbb{Q}_b)_{K}\det(A_{K}^J)\Bigg)\\
&=\!\!\!\! \sum_{\substack{I=(i_1, \dots,i_{n-3})\\1\leqslant i_1<\dots<i_{n-3}\leqslant n}}\,\,\sum_{\substack{K=(\kappa_1, \dots,\kappa_{n-3})\\1\leqslant \kappa_1<\dots<\kappa_{n-3}\leqslant n}} \!\!\!\!( \mathbb{Q}_b)_{I}(\mathbb{Q}_b)_{K} \, \Bigg(\sum_{\substack{J=(j_1, \dots,j_{n-3})\\1\leqslant j_1<\dots<j_{n-3}\leqslant n}}\det(A_{I}^J)\det(A_{K}^J)\Bigg)\\
&=\!\!\!\! \sum_{\substack{I=(i_1, \dots,i_{n-3})\\1\leqslant i_1<\dots<i_{n-3}\leqslant n}}\,\,\sum_{\substack{K=(\kappa_1, \dots,\kappa_{n-3})\\1\leqslant \kappa_1<\dots<\kappa_{n-3}\leqslant n}} \!\!\!\!( \mathbb{Q}_b)_{I}(\mathbb{Q}_b)_{K}\, \det\!\big[(A^{-1})^I A_K\big]=\!\!\!\!\!\!\! \sum_{\substack{I=(i_1, \dots,i_{n-3})\\1\leqslant i_1<\dots<i_{n-3}\leqslant n}}\! \!\!\!\!\!\!\big((\mathbb{Q}_b)_{I}\big)^2\!=\|\mathbb Q_b\|^2_{\mathrm{eucl}}
\end{align*}
where $(A^{-1})^I$ is the $(n-3)\times n$ submatrix of $A^{-1}$ with rows indexed by $I$ and $A_K$ is the $n\times (n-3)$ submatrix of $A$ with columns indexed by $K$.\\
We now show statement $(c)$. Let 
$$\eta=\!\!\! \sum_{1\leqslant i_1<\dots<i_{n-3}\leqslant n}\!\!\! (\mathcal Q_b)^{-1}\,(\mathbb Q_b)_{i_1\dots i_{n-3}}\,x^{i_1}dx^{i_2}\wedge \dots \wedge dx^{i_{n-3}},$$
and we notice that $d\eta= (\mathcal Q_b)^{-1}\,\mathbb Q_b$.
We choose a smooth cut--off function $\chi$ that vanishes on a sufficiently
large compact subset of $M$ and is identically one near infinity, and define 
$$\omega=d(\chi \eta).$$
By construction, $\omega$ is a smooth and closed $(n-3)$--form on $M$ and is equal to $(\mathcal Q_b)^{-1}\,\mathbb Q_b$ near infinity. 
Using $\omega$, we define the smooth vector field $Z$ by the identity $Z^\flat=(-1)^{n-1}\star(\omega\wedge\beta).$ 
Notice that, near infinity, we have
\begin{align*}
Z^\flat&=(-1)^{n-1}\star(\omega\wedge\beta)=\frac{(-1)^{n-1}}{\mathcal Q_b}\!\!\!\!\!\sum_{1\leqslant i_1<\dots<i_{n-3}\leqslant n}\!\!\!(\mathbb Q_b)_{i_1\dots i_{n-3}}\,\iota_{(dx^{i_{n-3}})^{\sharp}}\dots\iota_{(dx^{i_{1}})^\sharp}b\\
&=\frac{(-1)^{n-1}}{\mathcal Q_b}\!\!\!\!\!\sum_{1\leq i_1<\dots<i_{n-3}\leq n} (\mathbb Q_b)_{i_1\dots i_{n-3}} g^{i_1 j_1}\dots g^{i_{n-3} j_{n-3}}\, b_{j_1\dots j_{n-3} j}dx^j\\
&=\frac{(-1)^{n-1}}{\mathcal Q_b}\!\!\!\!\!\!\!\sum_{\substack{I=(i_1, \dots,i_{n-3})\\1\leqslant i_1<\dots<i_{n-3}\leqslant n}}\!\!\!\!\!(\mathbb Q_b)_{I}\Bigg(\frac{n}{3}r^{1-n} \sum_{k=1}^{n-3}
(-1)^{k-1}\nu^{i_k}_{\mathrm{eucl}}
(\mathbb Q_b)_{i_1\cdots\widehat{i_k}\cdots i_{n-3}j}
+
(-1)^{n-3}\frac{n}{3}r^{1-n}\nu^{j}_{\mathrm{eucl}}
(\mathbb Q_b)_{I}\Bigg) dx^j\\
&\quad+o_{1}(r^{1-n}),
\end{align*}
by formula \eqref{eq:hodge-contraction-general} and by computations analogous to those in the proof of statement $(a)$, as $g_{ij}-\delta_{ij}=O_2(\vert x \vert ^{-\tau})$ for every $i,j\in \{1,\dots,n\}$. In particular, $\mathcal Z=O_1(r^{1-n})$, and hence is admissible.
Now, we check that 
$$\mathcal{Q}_{\mathcal{Z}}=\frac{1}{|\mathbb S^{n-1}|}
  \lim_{r\to\infty}
  \int\limits_{\{|x|=r\}}\!\!\!
  \delta_{ij}\mathcal Z^i \nu^{j}_{\mathrm{eucl}}
  \,d\sigma_{\mathrm{eucl}}=\mathcal{Q}_b.$$
Then, by proceeding analogously to the proof of statement $(a)$, we obtain 
$$\mathcal{Q}_{\mathcal{Z}}=(\mathcal Q_b)^{-1}\!\!\!\!\!\!\!\!\!\sum_{\substack{I=(i_1, \dots,i_{n-3})\\1\leqslant i_1<\dots<i_{n-3}\leqslant n}}\!\!\!\!\!\big((\mathbb Q_b)_{I}\big)^2=\mathcal Q_b.$$
Statement $(d)$, including the equality case, follows from Theorem \ref{GenPMT} at the critical value $k=1-n$, after just one clarification.
By our assumptions, $X$ is admissible, and $\mathrm{R}+2\mathrm{div}X\in L^1(M,g)$, therefore, the $X$--ADM mass $m_X(g)$ is well defined. On the other hand, statement $(c)$ along with the fact that $\mathcal{E}$ is an admissible smooth vector field with $\mathrm{div}(\mathcal{E})\in L^{1}(M,g)$ implies that
$$ \lim_{r\to\infty}
  \int\limits_{\{|x|=r\}}\!\!\!
  \delta_{ij} X^i \nu^{j}_{\mathrm{eucl}}
  \,d\sigma_{\mathrm{eucl}}=-(n-1)\vert \SSS^{n-1}\vert\mathcal{Q}.
$$
As a consequence of this result, together with the fact that $m_X(g)$ given by formula \eqref{formXADMmass} is finite, it follows that $m_{\mathrm{ADM}}(g)$ is well defined (precisely, the limit in the distinguished asymptotically flat chart exists and is finite, while its invariance under changes of positively oriented asymptotically flat charts follows from that of $m_X(g)$ together with that of the charges introduced),
hence,
\begin{equation}
m_X(g)
=
m_{\mathrm{ADM}}(g)-\mathcal{Q}.
\end{equation}
\end{proof}

\appendix

\section{Hodge star--interior multiplication identities}

We collect here some useful Hodge star--interior multiplication identities.

\begin{proposition}
\label{prop:hodge-contraction-identity}
Let $(M^n,g)$ be an oriented Riemannian manifold and $1\leqslant k\leqslant \ell\leqslant n$. Then, for every $\omega_1,\dots,\omega_k\in \Omega^1(M)$ and $\eta\in\Omega^\ell(M)$, one has
\begin{equation}\label{eq:hodge-contraction-general}
\star\bigl(\omega_1\wedge\cdots\wedge \omega_k\wedge\star\eta\bigr)
=(-1)^{(\ell-k)(n-\ell)}\,
\iota_{\omega_k^{\sharp}}\dots \iota_{\omega_1^{\sharp}}\,\eta.
\end{equation}
\end{proposition}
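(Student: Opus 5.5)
Both sides of the identity are $C^\infty(M)$--multilinear in $\omega_1,\dots,\omega_k$ and $\eta$, so it is a pointwise algebraic statement. I will therefore fix $p\in M$ and a positively oriented orthonormal coframe $\{e^1,\dots,e^n\}$ of $T_p^*M$. By linearity it is enough to treat $\omega_j=e^{a_j}$ and $\eta=e^{i_1}\wedge\cdots\wedge e^{i_\ell}$ with $i_1<\dots<i_\ell$. I use the conventions $\alpha\wedge\star\beta=g(\alpha,\beta)\,d\mu$ (with the paper's normalization of $g$ on forms) and $\star\star=(-1)^{q(n-q)}$ on $q$--forms.

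\textbf{Step 1: the case $k=1$.} I first prove the one--step identity
\begin{equation*}
\omega\wedge\star\eta=(-1)^{\ell-1}\,\star\bigl(\iota_{\omega^\sharp}\eta\bigr),\qquad \eta\in\Omega^\ell(M).
\end{equation*}
For $\omega=e^a$ with $a\notin\{i_1,\dots,i_\ell\}$, both sides vanish: on the left, $\star\eta$ contains $e^a$; on the right, the contraction is zero. For $a=i_s$, I compute $\star\eta=\pm e^{J}$, where $J$ is the complementary multi--index and the sign is that of the permutation $(I,J)$. Then both sides reduce to $\pm$ the $(n-\ell+1)$--form $e^{a}\wedge e^{J}$, and I compare the two permutation signs directly. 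Equivalently, I can take the inner product of both sides with an arbitrary $(n-\ell+1)$--form and use $\iota_{\omega^\sharp}$ as the adjoint of $\omega\wedge\cdot$.

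\textbf{Step 2: iteration.} Next I iterate Step 1 from the innermost factor outward. Applying it first to $\omega_k\wedge\star\eta$, then to $\omega_{k-1}\wedge\star(\iota_{\omega_k^\sharp}\eta)$, and so on, gives
\begin{equation*}
\omega_1\wedge\cdots\wedge\omega_k\wedge\star\eta=(-1)^{\sum_{j=0}^{k-1}(\ell-1-j)}\,\star\bigl(\iota_{\omega_1^\sharp}\cdots\iota_{\omega_k^\sharp}\eta\bigr).
\end{equation*}
Because interior products anticommute, reversing the order into $\iota_{\omega_k^\sharp}\cdots\iota_{\omega_1^\sharp}\eta$ costs a factor $(-1)^{k(k-1)/2}$.

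\textbf{Step 3: apply $\star$.} Finally I apply $\star$ to both sides. The form $\iota\cdots\iota\,\eta$ has degree $\ell-k$, so $\star\star=(-1)^{(\ell-k)(n-\ell+k)}$ on it. Collecting the three signs, I check modulo $2$ that
\begin{equation*}
\sum_{j=0}^{k-1}(\ell-1-j)+\frac{k(k-1)}{2}+(\ell-k)(n-\ell+k)\equiv(\ell-k)(n-\ell).
\end{equation*}
The first two terms sum to $k(\ell-1)$, and $(\ell-k)k\equiv k\ell+k$, so the left side is $\equiv (\ell-k)(n-\ell)+k(\ell-1)+k\ell+k\equiv (\ell-k)(n-\ell)$, which gives the stated exponent.

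The main obstacle is the sign bookkeeping in Step 1, which depends on the orientation and Hodge--star conventions. I will fix it by testing a model case: $n$ arbitrary, $\eta=e^1\wedge\cdots\wedge e^\ell$, $\omega_j=e^{j}$. There the left side is $\star(e^1\wedge\cdots\wedge e^{k}\wedge e^{\ell+1}\wedge\cdots\wedge e^n)$ and the right side is computed explicitly, which pins down the convention before running the general argument. Note also that for $\ell=n-1$ and the paper's application with $\eta=b$, only the cases actually used in the coordinate formula need this check.
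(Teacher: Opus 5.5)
Your proof is correct, but it takes a different route from the paper.

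\textbf{The paper's route.} The paper proves the identity in one stroke on basis elements. It writes $\vartheta^J=\mathrm{sgn}(\sigma)\,\vartheta^I\wedge\vartheta^K$ and $d\mu=\mathrm{sgn}(\tau)\,\vartheta^J\wedge\vartheta^{J^c}$. It then observes that the complement of $I\cup J^c$ is $K$. The sign $(-1)^{(\ell-k)(n-\ell)}$ comes from commuting the blocks $\vartheta^K$ and $\vartheta^{J^c}$, of degrees $\ell-k$ and $n-\ell$.

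\textbf{Your route.} You isolate the classical one--form identity $\omega\wedge\star\eta=(-1)^{\ell-1}\star(\iota_{\omega^\sharp}\eta)$, and its sign is right. For $\eta=e^I$ and $a=i_s$, both sides equal $\epsilon_I\,e^a\wedge e^J$, where $d\mu=\epsilon_I\,e^I\wedge e^J$. Indeed the factors $(-1)^{\ell-1}$, $(-1)^{s-1}$ (from $\iota_{e_a}e^I$) and $(-1)^{\ell-s}$ (from $\star e^{I\setminus a}$) cancel. You then iterate this $k$ times. Finally you recover the stated ordering and exponent from the reversal sign $(-1)^{k(k-1)/2}$ and from $\star\star=(-1)^{(\ell-k)(n-\ell+k)}$. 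Your mod~$2$ bookkeeping is correct.

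\textbf{Comparison.} Your route is modular: the only genuine permutation computation is the case $k=1$. Since Step~1 holds for arbitrary one--forms, the iteration applies directly to arbitrary $\omega_1,\dots,\omega_k$. You therefore do not need to first reduce to strictly increasing, repeat-free multi--indices $I$, as the paper implicitly does. The paper's route is shorter and self-contained, needing neither $\star\star$ nor iterated sign arithmetic.

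\textbf{Minor points.} The model-case test in your last paragraph is only a sanity check. It is not needed once Step~1 is computed as above. The remark about $\ell=n-1$ is off-target: in the paper the identity is used with $\eta=b$ of degree $n-2$ and $k=n-3$, and in Proposition~\ref{prop:hypersurface-hodge-contraction} with $\ell=k+1$. Since your argument covers all $1\leqslant k\leqslant\ell\leqslant n$, this plays no role.
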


\begin{proof}
Let $(e_1,\ldots,e_n)$ be a local positively oriented orthonormal frame of $M$, with the dual coframe
$(\vartheta^1,\ldots,\vartheta^n)$. For every $I=(i_1, \dots,i_{k})$ with $1\leqslant i_1<\dots<i_{k}\leqslant n$ and $J=(j_1,\dots,j_\ell)$ with $1\leqslant j_1<\cdots<j_\ell\leqslant n$, we denote by $\vartheta^I$ the $k$--form $\vartheta^{i_1}\wedge\cdots\wedge\vartheta^{i_k}$, and similarly for $\vartheta^J$. We then show that 
\begin{equation}\label{eq:hodge-contraction-generalforbasis}
\star\bigl(\vartheta^I\wedge\star\vartheta^J\bigr)
=(-1)^{(\ell-k)(n-\ell)}
\iota_{e_{i_k}}\dots \iota_{e_{i_1}}\vartheta^J,
\end{equation}
and note that this is sufficient to establish the identity \eqref{eq:hodge-contraction-general}, by multilinearity.
If $I\not\subset J$, then $\iota_{e_{i_k}}\dots \iota_{e_{i_1}} \vartheta^J=0$. On the other hand, we also have $\vartheta^I \wedge \star \vartheta^J= 0$. Indeed, in this case $\star \vartheta^J = \pm \vartheta^{J^c}$ and the wedge product necessarily contains a repeated basis 1--form. 
Assume now that $I\subset J$. Let $K=(\kappa_{k+1}, \dots,\kappa_{l})$ with $1\leqslant \kappa_{k+1}<\dots<\kappa_{l}\leqslant n$ be such that
$\{j_1,\dots,j_\ell\}=\{i_1,\dots,i_{k}, \kappa_{k+1}, \dots,\kappa_{l}\}$. Then, we have
\begin{equation}
\label{eq:contraction-basis}
\iota_{e_{i_k}}\dots \iota_{e_{i_1}}\vartheta^J
=\mathrm{sgn}(\sigma)\,\vartheta^K,
\end{equation}
where $\sigma$ is the permutation of the multi--index sets such that $ \vartheta^J=\mathrm{sgn}(\sigma)\,\vartheta^I\wedge\vartheta^K$.
Let $J^c$ denote the ordered complement of $J$ in $\{1,\ldots,n\}$ and
write
$$
\vartheta^1\wedge\cdots\wedge\vartheta^n=\mathrm{sgn}(\tau)\,
\vartheta^J\wedge\vartheta^{J^c},
$$
where $\tau$ is the corresponding permutation of the multi--index sets.
By the definition of the Hodge star operator,
$$
\star_g\vartheta^J=\mathrm{sgn}(\tau)\,\vartheta^{J^c}.
$$
Therefore,
$$
\star\bigl(\vartheta^I\wedge\star\vartheta^J\bigr)
=\mathrm{sgn}(\tau)
\star\!\bigl(\vartheta^I\wedge\vartheta^{J^c}\bigr).
$$
The complement of $I\cup J^c$ is precisely $K$. To determine the sign, we observe that
\begin{align}
\vartheta^1\wedge\cdots\wedge\vartheta^n&=\mathrm{sgn}(\tau)\,
\vartheta^J\wedge\vartheta^{J^c}=\mathrm{sgn}(\sigma)\, \mathrm{sgn}(\tau)\,
\vartheta^I\wedge \vartheta^K\wedge\vartheta^{J^c}\\
&=(-1)^{(\ell-k)(n-\ell)}\,\mathrm{sgn}(\sigma)\,\mathrm{sgn}(\tau)\,
\vartheta^I\wedge \vartheta^{J^c}\wedge\vartheta^K.
\end{align}
It then follows that
$$
\star\bigl(\vartheta^I\wedge\vartheta^{J^c}\bigr)
=(-1)^{(\ell-k)(n-\ell)}\,\mathrm{sgn}(\sigma)\,\mathrm{sgn}(\tau)\,\vartheta^K.
$$
Consequently, we obtain
\begin{equation}
\star_g\bigl(\vartheta^I\wedge\star_g\vartheta^J\bigr)=
(-1)^{(\ell-k)(n-\ell)}\,\mathrm{sgn}(\sigma)\,\big(\mathrm{sgn}(\tau)\big)^2\,\vartheta^K
=(-1)^{(\ell-k)(n-\ell)}\,\mathrm{sgn}(\sigma)\,\vartheta^K.
\end{equation}
Combining this identity with \eqref{eq:contraction-basis} proves
\eqref{eq:hodge-contraction-generalforbasis}.
\end{proof}

\begin{proposition}
\label{prop:hypersurface-hodge-contraction}
Consider an oriented Riemannian manifold $(M^n,g)$, and suppose $\Sigma^{n-1}\subset M$ is an hypersurface with the orientation determined
by a unit normal vector field $\nu$ (hence, $d\sigma=\iota_{\Sigma}^*(\iota_\nu d\mu)\,$).
Let $1\leqslant k\leqslant n-1$. Then, for every $\eta\in\Omega^{k+1}(M)$ and $\omega_1,\dots,\omega_k\in \Omega^1(M)$, there holds
\begin{equation}
\label{eq:hypersurface-hodge-contraction}
\iota_{\Sigma}^*\bigl(\omega_1\wedge\cdots\wedge \omega_k\wedge\star\eta\bigr) 
=
(-1)^k\bigl(\iota_{\omega_k^{\sharp}}\dots \iota_{\omega_1^{\sharp}}\,\eta\bigr)(\nu)\,d\sigma.
\end{equation}
In particular, in local coordinates $(x^1,\ldots,x^n)$, one has
\begin{equation}
\label{eq:hypersurface-hodge-contraction-coordinates}
\iota_{\Sigma}^*\bigl(dx^{i_1}\wedge\cdots\wedge dx^{i_k}\wedge\star\eta\bigr)
=
(-1)^k
g^{i_1j_1}\cdots g^{i_kj_k}
\eta_{j_1\ldots j_k\ell}\nu^\ell
\,d\sigma
\end{equation}
for every $I=(i_1, \dots,i_{k})$ with $1\leqslant i_1<\dots<i_{k}\leqslant n$.
\end{proposition}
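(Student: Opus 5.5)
The plan is to reduce the hypersurface identity to the pointwise identity of Proposition \ref{prop:hodge-contraction-identity}, and then use the elementary fact that the Hodge dual of a one--form pulls back to a hypersurface as its normal component times the area form.

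\emph{Step 1 (apply Proposition \ref{prop:hodge-contraction-identity}).} Apply it with $\ell=k+1$, so that $(\ell-k)(n-\ell)=n-k-1$. Setting
$$\alpha=\iota_{\omega_k^\sharp}\cdots\iota_{\omega_1^\sharp}\eta\in\Omega^1(M),\qquad \gamma=\omega_1\wedge\cdots\wedge\omega_k\wedge\star\eta\in\Omega^{n-1}(M),$$
the proposition gives $\star\gamma=(-1)^{n-k-1}\alpha$.

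\emph{Step 2 (invert the Hodge star).} On $(n-1)$--forms in Riemannian signature we have $\star\star=(-1)^{(n-1)\cdot 1}$. Hence
$$\gamma=(-1)^{n-1}\star\star\gamma=(-1)^{n-1}(-1)^{n-k-1}\star\alpha=(-1)^k\star\alpha .$$

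\emph{Step 3 (restrict to $\Sigma$).} Check on a positively oriented orthonormal coframe that $\star\vartheta^1=\vartheta^2\wedge\cdots\wedge\vartheta^n=\iota_{e_1}d\mu$. By linearity this gives $\star\alpha=\iota_{\alpha^\sharp}d\mu$. Along $\Sigma$, decompose $\alpha^\sharp=\alpha(\nu)\nu+T$ with $T$ tangent to $\Sigma$. The form $\iota_\Sigma^*(\iota_T d\mu)$ vanishes, because evaluating it on $n-1$ tangent vectors feeds $n$ vectors of the $(n-1)$--dimensional space $T\Sigma$ into $d\mu$. Therefore
$$\iota_\Sigma^*(\star\alpha)=\alpha(\nu)\,\iota_\Sigma^*(\iota_\nu d\mu)=\alpha(\nu)\,d\sigma,$$
using the orientation convention in the statement. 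Combined with Step 2, this proves \eqref{eq:hypersurface-hodge-contraction}.

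\emph{Step 4 (coordinates).} Take $\omega_m=dx^{i_m}$, so that $\omega_m^\sharp=g^{i_mj_m}\partial_{j_m}$. With the convention that $\iota_v$ inserts $v$ in the first slot, successive contractions fill the slots of $\eta$ in order. This gives
$$\alpha(\nu)=\eta(\omega_1^\sharp,\dots,\omega_k^\sharp,\nu)=g^{i_1j_1}\cdots g^{i_kj_k}\eta_{j_1\dots j_k\ell}\nu^\ell,$$
which is \eqref{eq:hypersurface-hodge-contraction-coordinates}.

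The only delicate point is sign bookkeeping: the $\star\star$ sign, the convention $\star\alpha=\iota_{\alpha^\sharp}d\mu$, and the slot ordering of the iterated contractions. I would verify these on the basis case $\eta=\vartheta^1\wedge\cdots\wedge\vartheta^{k+1}$ with $\nu=e_{k+1}$ as a consistency check.
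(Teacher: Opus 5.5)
Your argument is correct and is essentially the paper's proof. Both apply Proposition \ref{prop:hodge-contraction-identity} with $\ell=k+1$, then use the pullback identity $\iota_\Sigma^*\gamma=(-1)^{n-1}(\star\gamma)(\nu)\,d\sigma$ for $(n-1)$--forms, and finish with the same slot-filling coordinate computation. The only difference is how that identity is verified: the paper checks it directly in an adapted frame $(\nu,e_1,\dots,e_{n-1})$, whereas you obtain it equivalently from $\star\star=(-1)^{n-1}$ and $\iota_\Sigma^*(\iota_{\alpha^\sharp}d\mu)=\alpha(\nu)\,d\sigma$.
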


\begin{proof}
We first recall an elementary identity for $(n-1)$--forms. Let
$$
\gamma\in\Omega^{n-1}(M).
$$
We choose a local positively oriented orthonormal frame $(e_1,\ldots,e_{n-1})$ on $\Sigma$. Note that $(\nu,e_1,\ldots,e_{n-1})$ is then a positively oriented orthonormal frame along an open set $S$ of $\Sigma$, and let
$
(\nu^\flat,\vartheta^1,\ldots,\vartheta^{n-1})
$
be its dual coframe. 
By the definition of the induced
orientation, we have
$$
d\sigma
=
\vartheta^1\wedge\cdots\wedge\vartheta^{n-1}\quad\quad \text{and}\quad\quad 
\star\!
\bigl(
\vartheta^1\wedge\cdots\wedge\vartheta^{n-1}
\bigr)
=
(-1)^{n-1}\nu^\flat
$$
on $S$.
It then follows that
$$
(\star\gamma)(\nu)
=
(-1)^{n-1}\,
\gamma(e_1,\ldots,e_{n-1}).
$$
Therefore, we have
\begin{equation}
\label{eq:pullback-n-minus-one-form}
\iota_{\Sigma}^*\gamma
= \gamma(e_1,\ldots,e_{n-1})\,\vartheta^1\wedge\cdots\wedge\vartheta^{n-1}=
(-1)^{n-1}\,
(\star\gamma)(\nu)
\,d\sigma.
\end{equation}
We now apply \eqref{eq:pullback-n-minus-one-form} to $\gamma=\omega_1\wedge\cdots\wedge \omega_k\wedge\star\eta$, which has degree $n-1$, and then use Proposition \ref{prop:hodge-contraction-identity}, thereby obtaining
\begin{align}
\iota_{\Sigma}^*\bigl(\omega_1\wedge\cdots\wedge \omega_k\wedge\star\eta\bigr)
&=
(-1)^{n-1}
\star\!\bigl(\omega_1\wedge\cdots\wedge \omega_k\wedge\star\eta\bigr)(\nu)
\,d\sigma\\
&=
(-1)^{n-1}(-1)^{n-k-1}\bigl(\iota_{e_{i_k}}\dots \iota_{e_{i_1}}\eta\bigr)(\nu)\,d\sigma\\
&=
(-1)^k
\bigl(\iota_{e_{i_k}}\dots \iota_{e_{i_1}}\eta\bigr)(\nu)\,d\sigma.
\end{align}
This proves identity \eqref{eq:hypersurface-hodge-contraction}.\\
For the coordinate expression, we have
\begin{align}
\bigr(\iota_{(dx^{i_k})^{\sharp}}\dots \iota_{(dx^{i_1})^{\sharp}}\,\eta\bigr) (\nu)&=\eta\bigl( (dx^{i_1})^{\sharp},\dots, (dx^{i_k})^{\sharp},\nu \bigr)\\
&=\eta\bigl( g^{i_1j_1}\partial_{j_1},\dots,g^{i_kj_k}\partial_{j_k} ,\nu^l \partial_{l}\bigr)\\
&=g^{i_1j_1}\cdots g^{i_kj_k}\eta_{j_1\ldots j_k\ell}\nu^\ell,
\end{align}
 and substituting this identity into
\eqref{eq:hypersurface-hodge-contraction} gives
\eqref{eq:hypersurface-hodge-contraction-coordinates}.
\end{proof}

\bigskip
\noindent
\textbf{{Acknowledgements.}} {\em  The author deeply thanks Carlo Mantegazza. %for his thorough reading of the manuscript and for the many precious suggestions.
The author also thanks Christian B\"{a}r and Emanuele Spadaro.
The author is member of INdAM–GNAMPA, and is partially supported by the GNAMPA project ``Analysis of Non--smooth Geometric Evolutions''.}\\
\textbf{AI usage statement} {\em ChatGPT 5.6 Pro was used as interactive assistant. The author takes full responsibility for the content of the paper.}

\bigskip

\bibliographystyle{amsplain}
\bibliography{biblio}

\end{document}